\numberwithin{equation}{section}
\newtheoremstyle{Teorema}% name
{3pt}% Space above
{3pt}% Space below
{\slshape}% Body font
{}% Indent amount
{\bfseries}% Theorem head font
{:}% Punctuation after theorem head
{\newline}% Space after theorem head
{}% Theorem head spec (can be left empty, meaning `normal')
\newtheorem{theorem}{Theorem}[section]
\newtheorem{definition}[theorem]{Definition}
\newtheorem{corollary}[theorem]{Corollary}
\newtheorem{proposition}[theorem]{Proposition}
\newtheorem{lemma}[theorem]{Lemma}
\theoremstyle{definition}
\DeclareMathOperator{\D}{d}
\DeclareMathOperator{\supp}{supp}
\newcommand{\Z}{\mathbb{Z}}
\newcommand{\R}{\mathbb{R}}
\newcommand{\C}{\mathbb{C}}
\date{\today}
\begin{document}

\pdfrender{StrokeColor=black,TextRenderingMode=2,LineWidth=0.4pt}

\title[Strong asymptotic of polynomials]{Strong asymptotic of Cauchy biorthogonal polynomials and orthogonal polynomials
with varying measure}

%    Information for first author
\author{L. G. Gonz\'alez Ricardo}
%    Address of record for the research reported here
\address{Department of Mathematics, Universidad Carlos \textsc{iii} de Madrid, Avenida de la Universidad, 30
CP-28911, Leganés, Madrid, Spain.}
%   Current address
%	\curraddr{Department of Mathematics and Statistics,
%	Case Western Reserve University, Cleveland, Ohio 43403}
\email{luisggon@math.uc3m.es}
%	\thanks will become a 1st page footnote.
\thanks{The first author was supported in part by a research fellowship from Department of Mathematics of Universidad
Carlos III de Madrid, Spain.}

%    Information for second author
\author{G. L\'opez Lagomasino}
\address{Department of Mathematics, Universidad Carlos \textsc{iii} de Madrid, Avenida de la Universidad, 30
CP-28911, Leganés, Madrid, Spain.}
\email{lago@math.uc3m.es}
\thanks{The second author was supported in part by the research grant PGC2018-096504-B-C33 of Ministerio de
Ciencia, Innovaci\'on y Universidades, Spain.}

\date{\today}

\begin{abstract}
	We give the strong asymptotic  of Cauchy biorthogonal polynomials under the assumption that the defining measures are supported on bounded non intersecting intervals of the real line and satisfy Szeg\H{o}'s condition. The biorthogonal polynomials are connected with certain mixed type Hermite-Pad\'e polynomials, which verify full orthogonality relations with respect to certain varying measures. Thus, the strong asymptotic of orthogonal polynomials with respect to varying measures plays a key role in the study.
\end{abstract}

\maketitle

\medskip

\noindent \textbf{Keywords:} biorthogonal polynomials, Hermite-Pad\'e approximation, varying measures, strong asymptotic

\medskip

\noindent \textbf{AMS classification:}  Primary: 42C05; 30E10,  Secondary: 41A21
	
\section{Introduction}

\subsection{Cauchy biorthogonal polynomials}
	Let $\mathbf{\Delta} = (\Delta_1,\Delta_2)$ be a pair of intervals, contained in the real line $\R$, which have at most one common point. By $\mathcal{M}(\mathbf{\Delta})$ we denote the cone of all pairs $(\sigma_1,\sigma_2)$ of positive Borel measures with finite moments whose supports verify $\supp\sigma_k\subset \Delta_k$ and
	\[ \int \int \frac{\D \sigma_1(x)  \D \sigma_2(y)}{|x-y|} < \infty.
	\]

	Fix $(\sigma_1,\sigma_2) \in \mathcal{M}(\mathbf{\Delta})$. For each pair of non negative integers $(m,n) \in \Z^2_{\geq 0}$ there exists a pair $(P_m, Q_n)$ of monic polynomials whose degrees verify $\deg P_m \leq m, \deg Q_n \leq n,$ and
	\begin{equation}
		\label{biort}
	\int_{\Delta_1} \int_{\Delta_2} \frac{P_m(x)  Q_n(y) \D \sigma_1(x) \D \sigma_2(y)}{x-y}  = C_n \delta_{m,n},\quad C_n \neq 0.
	\end{equation}
	(As usual, $\delta_{m,n} = 0, m \neq n, \delta_{n,n} = 1$.) These  polynomials were introduced in \cite{Bertola:CBOPs} and called Cauchy biorthogonal polynomials. The original definition uses the kernel $(x+y)^{-1}$ (and measures supported in the positive real line to avoid singularities in the kernel except when $x=y=0$), but we find it more convenient to employ $(x-y)^{-1}$ instead, since it adapts better to our presentation. Some interesting properties were revealed. In particular, it was shown that $\deg P_n = n$, its zeros are simple, interlace for consecutive values of $n$, and lie in $\stackrel{\circ}{\Delta}_1$ (the interior of $\Delta_1$ with the Euclidean topology of $\R$). The same goes for the $Q_n$ on $\Delta_2$.

	Cauchy biorthogonal polynomials appear in the analysis of the two matrix model \cite{Ber,BGS} and were used to find discrete solutions of the Degasperis-Procesi equation \cite{Bertola:CBOPs} through a Hermite-Pad\'e approximation problem for two discrete measures. In \cite{BGS}, the authors apply the nonlinear steepest descent method to a class of $3 \times 3$ Riemann-Hilbert problems introduced in connection with the Cauchy two-matrix random model, solve the Riemann-Hilbert problem, and establish strong asymptotic results for the Cauchy biorthogonal polynomials for a class of measures given by weights with exponential decay at infinity (of Laguerre type).  The results obtained in \cite{BGS} were later extended in \cite{BerBoth}.

	Our goal is to prove strong asymptotic results for Cauchy biorthogonal polynomials when  the intervals
	$\Delta_k=[a_k,b_k]$, $k=1,2$ are bounded non intersecting intervals and the measures $\sigma_1, \sigma_2$  verify Szeg\H{o}'s condition
	\begin{equation}
		\label{szego}
		\int_{\Delta_k} \ln \sigma_k'(x)\D\eta_{\Delta_k}(x)  > - \infty, \qquad k=1,2,
		%\frac{ \D x}{\sqrt{(b_k -x)(x - a_k)}}
	\end{equation}
	where $\sigma'$ denotes the Radon-Nikodym derivative of $\sigma$ with respect to the Lebesgue measure  and
	$$\D\eta_{\Delta}(x) := \frac{\D x}{\sqrt{(x-a)(b-x)}}$$
	stands for Chebyshev measure in the interval $\Delta = [a,b]$. In this case we write $(\sigma_1,\sigma_2) \in \mathcal{S}(\mathbf{\Delta})$. Therefore, we extend Szeg\H{o}'s theory on the strong asymptotic of orthogonal polynomials supported on a bounded interval of the real line to the context of Cauchy biorthogonal polynomials. In the sequel, the intervals $\Delta_1,\Delta_2$ are bounded and do not intersect.

	\begin{theorem}
		\label{mainbiort}
		Let $(\sigma_1,\sigma_2) \in \mathcal{S}(\mathbf{\Delta})$ and $(P_n)_{n\geq 0}, (Q_n)_{n\geq 0}$ be the sequences of monic polynomials determined by \eqref{biort}. Then
		\begin{equation}
			\label{asinbio}
			\lim_{n} \frac{P_n(z)}{\Phi_1^n(z)} = \frac{\mathsf{G}^*_1(z)}{\mathsf{G}^*_1(\infty)}, \qquad \lim_{n} \frac{Q_n(z)}{\Phi_2^n(z)} = \frac{\mathsf{G}_2(z)}{\mathsf{G}_2(\infty)},
		\end{equation}
		uniformly on each compact subset of $\Omega_1 = \overline{\C} \setminus \Delta_1$ and $\Omega_2 =\overline{\C} \setminus \Delta_2$, respectively, where $\Phi_k \in \mathcal{H}(\Omega_k), k=1,2,$ (holomorphic in $\Omega_k $) is the exponential of a complex potential constructed from a vector equilibrium problem (see \eqref{vector_equil} and \eqref{compfunc}), and $\mathsf{G}^*_1, \mathsf{G}_2$ are Szeg\H{o} functions obtained as components  of fixed points of the maps $T_{\mathbf{w}_P}$ and $T_{\mathbf{w}_Q}$, respectively (see Def. \ref{oper_T}, \eqref{oper_TP} and \eqref{oper_TQ}).
	\end{theorem}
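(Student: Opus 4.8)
The plan is to peel the double Cauchy kernel in \eqref{biort} apart and reduce the biorthogonality to \emph{full} orthogonality conditions on each interval separately, and then to feed those into the strong asymptotic theory for orthogonal polynomials with respect to varying measures. Writing $\widehat{Q_n\sigma_2}(x) = \int_{\Delta_2} \frac{Q_n(y)\,\D\sigma_2(y)}{x-y}$, relation \eqref{biort} with $m<n$ says that $P_n$ is orthogonal, with respect to the signed measure $\widehat{Q_n\sigma_2}(x)\,\D\sigma_1(x)$ on $\Delta_1$, to every polynomial of degree at most $n-1$; symmetrically, $Q_n$ satisfies $n$ orthogonality conditions on $\Delta_2$ against the measure generated by $\widehat{P_n\sigma_1}$. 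Because $\Delta_1$ and $\Delta_2$ have at most one common point, each of these Cauchy transforms is holomorphic in a neighborhood of the \emph{opposite} interval, and the localization and interlacing of the zeros recalled after \eqref{biort} (zeros of $Q_n$ inside $\stackrel{\circ}{\Delta}_2$) force $\widehat{Q_n\sigma_2}$ to keep a constant sign on $\Delta_1$. Hence each $P_n$ is the $n$-th monic orthogonal polynomial for a varying weight on $\Delta_1$ equal to $\D\sigma_1$ times an analytic, sign-definite factor driven by the partner polynomial $Q_n$, and likewise for $Q_n$ on $\Delta_2$. This is the mixed type Hermite--Pad\'e reformulation announced in the abstract, and I would take it as the starting point.

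With the reformulation in hand I would invoke the strong asymptotic theorem for orthogonal polynomials with varying measures. The subtle point is that the varying factor attached to $\sigma_1$, once normalized, is essentially $Q_n/\Phi_2^n$, which is exactly the quantity whose limit we are also trying to compute; thus the two asymptotic statements in \eqref{asinbio} are genuinely coupled. I would break this circularity by a simultaneous bootstrap. First I would establish the contracted zero distributions of $(P_n)$ and $(Q_n)$ through a vector equilibrium problem on $(\Delta_1,\Delta_2)$, obtaining the weak ($n$-th root) asymptotics. The external field produced on $\Delta_1$ by the $Q_n$-factor is $\tfrac1n\log|Q_n|$, which converges to a potential that is \emph{harmonic} in a neighborhood of $\Delta_1$ (since its charge sits on the disjoint set $\Delta_2$); this is what guarantees that the exponential growth rate of $P_n$ off $\Delta_1$ remains $\Phi_1^n$, with no extra exponential factor, exactly as in \eqref{asinbio}.

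Granted the weak asymptotics, I would upgrade to strong asymptotics by passing to the limit in the Szeg\H{o} functions of the varying weights. Once $Q_n/\Phi_2^n \to \mathsf{G}_2/\mathsf{G}_2(\infty)$ is known on $\Delta_1$, the varying factor converges uniformly there, and the Szeg\H{o} condition \eqref{szego} keeps the weights in the Szeg\H{o} class; therefore the Szeg\H{o} function of the $n$-th varying weight converges to a fixed Szeg\H{o}-type function, which I would identify with the $\mathsf{G}^*_1$ of \eqref{lim_P*} (the star recording the reciprocal/boundary-value modification inherited from the Cauchy-transform factor) and, symmetrically, with the $\mathsf{G}_2$ of \eqref{limfund*}. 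Running the two arguments in tandem closes the loop and yields both limits in \eqref{asinbio}, uniformly on compact subsets of $\Omega_1$ and $\Omega_2$ via the usual normal-families and maximum-principle argument applied to $P_n/\Phi_1^n$ and $Q_n/\Phi_2^n$.

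The main obstacle I anticipate is precisely the coupling between the two varying weights and the need to control the convergence of the varying factors in a Szeg\H{o}-compatible sense \emph{uniformly up to the endpoints} of the intervals, where Szeg\H{o} functions are most delicate; a clean quantitative rate for $Q_n/\Phi_2^n$ near $\partial\Delta_2$ is what makes the bootstrap rigorous. A secondary technical point is verifying that $\widehat{Q_n\sigma_2}$ (and its partner) does not vanish on the opposite interval uniformly in $n$, so that the associated weights remain strictly positive and in the Szeg\H{o} class; this should follow from the sign and decay properties of functions of the second kind for the underlying Nikishin-type system, but it must be established before the varying-measure machinery can be applied.
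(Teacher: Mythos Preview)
Your outline has the right spirit but contains two concrete errors in the reduction step and a genuine gap in the decoupling.

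First, the reformulation of \eqref{biort} is not what you claim. The relations with $m<n$ read $\int P_m(x)\,\widehat{Q_n\sigma_2}(x)\,\D\sigma_1(x)=0$; these are conditions determining $Q_n$ (they say the signed measure $\widehat{Q_n\sigma_2}\,\D\sigma_1$ annihilates every polynomial of degree $<n$), not conditions making $P_n$ an orthogonal polynomial with respect to that measure. More seriously, your assertion that $\widehat{Q_n\sigma_2}$ keeps a constant sign on $\Delta_1$ is false: this function is exactly the form $\mathcal{A}_{n,1}$ of Lemma~\ref{l2}, and it has precisely $n$ simple zeros in $\stackrel{\circ}{\Delta}_1$. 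The correct varying-measure reformulation (see \eqref{int3}--\eqref{hn1}) introduces the auxiliary polynomial $Q_{n,1}$ whose roots are those zeros; then $Q_n=Q_{n,2}$ is the $n$-th monic orthogonal polynomial for the positive varying measure $\D\sigma_2/|Q_{n,1}|$, while $Q_{n,1}$ is orthogonal for $|h_{n,1}|\,\D\sigma_1/|Q_{n,2}|$, with $|h_{n,1}|$ converging uniformly on $\Delta_1$ to an explicit limit $h$ by \eqref{int12}. The coupled pair one must analyze is $(Q_{n,1},Q_{n,2})$, not $(P_n,Q_n)$; the asymptotics of $P_n$ come from the symmetric construction attached to $\mathcal{N}(\sigma_2,\sigma_1)$.

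Second, even granted the correct pair, your bootstrap does not close. Weak asymptotics only yields $|Q_{n,k}/\Phi_k^n|^{1/n}\to 1$, which gives no normal-family control over $Q_{n,k}/\Phi_k^n$, and the phrase ``once $Q_n/\Phi_2^n\to\mathsf{G}_2/\mathsf{G}_2(\infty)$ is known'' assumes the conclusion. The paper breaks the circularity by Aptekarev's fixed-point scheme: the boundary relations define an operator $T$ on pairs of positive functions which is a $\tfrac12$-contraction (Proposition~\ref{prop4}) with unique fixed point $(\mathsf{G}_1,\mathsf{G}_2)$; Proposition~\ref{prop2} (via Theorem~\ref{cor:a}) manufactures, for each $n$, a polynomial pair whose normalized version lies arbitrarily close to this fixed point, so the relevant neighborhoods in $H^{+,n}$ are nonempty; Proposition~\ref{prop5} together with the contraction shows that the discrete operator $T_n$ maps such a neighborhood into itself for all large $n$; Brouwer's theorem then produces a fixed point of $T_n$ there, and Proposition~\ref{prop1} identifies it uniquely with $(Q_{n,1},Q_{n,2})$. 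None of these three ingredients---the auxiliary polynomial $Q_{n,1}$, the contraction $T$, and the Brouwer-plus-uniqueness argument---appears in your proposal, and they are precisely what replaces the ``quantitative rate near $\partial\Delta_2$'' that you correctly flag as the obstacle.
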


The logarithmic and ratio asymptotic of biorthogonal polynomials were obtained in \cite{ULS} for more general Cauchy type kernels involving $m\geq 2$ measures. As in \cite{ULS}, we reduce the study of the strong asymptotic of Cauchy biorthogonal polynomials to that of  polynomials arising from an associated mixed type Hermite-Pad\'e approximation problem. The Hermite-Pad\'e polynomials turn out to be  orthogonal  with respect to varying measures. So, the strong asymptotic of such sequences of orthogonal polynomials  play a central role in our discussion.

\subsection{Orthogonal polynomials with varying measures}
	Let $\Delta = [a,b] \subset \R$. Consider a sequence $(\D \mu_n/w_{2n})_{n\geq 0}$ where $\mu_n$ is a finite positive Borel measure supported on $\Delta$ and $w_{2n}$ is a polynomial with real coefficients, $\deg w_{2n}=i_n\leq 2n$, whose zeros $(x_{2n,i})_{i=2n-i_n+1}^{2n}$ lie in $\C \setminus \Delta$. This is called a sequence of varying measures. Let $L_n(x) = x^n+\cdots$ be the $n$-th monic orthogonal polynomial satisfying
	\begin{equation}
		\label{def:Ln}
		\int x^\nu L_n(x)\frac{\D \mu_n(x)}{|w_{2n}(x)|} =0,\qquad \nu=0,1,\ldots,n-1.
	\end{equation}
	The sequence $(L_n)_{n \geq 0}$ is called the sequence of monic orthogonal polynomials with respect to the given varying measures. A common normalization is to take
	\[ \tau_n := \left(\int  L_n^2(x)\frac{\D \mu_n(x)}{|w_{2n}(x)|}\right)^{-1/2},
  	\]
	and define $l_n(x):=\tau_n L_n(x)$ as the orthonormal polynomial of degree $n$.

	In the context of multipoint Pad\'e and Hermite-Pad\'e approximation, orthogonal polynomials with respect to varying measures arise naturally (see, for example, \cite{sasha, BSW, buslop, GRS, lago89}). Depending on the type of asymptotic one wishes to obtain for the sequence $(L_n)_{n\geq0}$ (or $(l_n)_{n\geq0}$), some conditions must be imposed on the varying measures. Combinations of the following ones are appropriate in the present paper.
	\begin{itemize}
		\item[i)] There exists a finite positive Borel measure $\mu$ supported on $\Delta$ such that $\lim_{n}	\mu_n = \mu$ in the weak star topology of measures, whose absolutely continuous part satisfies   $\mu' > 0$ a.e. on $\Delta$, and
			\[ \lim_{n} \int |\mu_n' - \mu'| \D x = 0.
			\]
		\item[ii)] The measure $\mu$ satisfies Szeg\H{o}'s condition on $\Delta$; that is,
			\[    \int_\Delta \ln \mu'(x)\D\eta_\Delta(x)>-\infty
			\]
			and
			\[
			\liminf_n \int \ln \mu_n'(x) \D\eta_\Delta(x) \geq \int \ln \mu'(x) \D\eta_\Delta(x).\]
		\item[iii)] Let $\Psi$ be the conformal map from $\Omega = \overline{\C}\setminus\Delta$ onto the 	  exterior of the unit circle such that $\Psi(\infty)=\infty$ and $\Psi'(\infty)>0$. The zeros 		of the polynomials $w_{2n}$ verify
			\[ \lim_{n\to \infty} \sum_{i=  1}^{2n}\left(1 - \frac{1}{|\Psi(x_{2n,i})|} \right) = \infty.
			\]
			By convention, $x_{2n,i} = \infty, 1\leq i \leq 2n-i_n,$ when $i_n < 2n$.
		\item[iv)] There exist non negative  continuous functions $\varphi$ and $\psi$ on $(a,b)$ such that
			\begin{equation}
				\label{def:psi}
				\lim_n \varphi^{n}(x)|w_{2n}(x)| = 1/\psi(x)
			\end{equation}
			uniformly on compact subsets of $(a,b)$ and
			\begin{equation}
   				\label{rest_var_w}
   				\lim_{n \to \infty}\int_a^b \ln(\varphi^{n}(x)|w_{2n}(x)|)\D\eta_\Delta(x)= -\int_a^b \ln
   \psi(x)\D\eta_\Delta(x) < +\infty.
 			\end{equation}
	\end{itemize}

	In many applications, $\D \mu_n = h_n \D \overline{\mu}, \overline{\mu}' > 0$ a.e. on $\Delta$, where $(h_n)_{n \geq 0}$ is a sequence of positive continuous functions which converges uniformly on $\Delta$ to a positive continuous function $h$, and the zeros of the polynomials $(w_{2n})_{n\geq 0}$ are uniformly bounded away from $\Delta$ in which case $i)$ and $iii)$ are immediate, and $ii)$ holds if $\overline\mu$ verifies Szeg\H{o}'s condition.

	Conditions $i)-iii)$ are sufficient to prove strong asymptotic for $(L_n)_{n\geq 0}$. The first result in this direction appeared in \cite{lago} and was later improved in \cite{bernardo_lago2} and \cite{BCL}. An alternative proof of the main result in \cite{lago} may be found in \cite{stahl}.  The answer in \cite{lago} is given in terms of a Szeg\H{o} function associated with $\mu$ and a Blaschke product in which the zeros of $w_{2n}$ intervene (see \eqref{asintb} below).  We wish to replace the Blaschke product in the asymptotic formula by the $n$-th power of a fixed function (as in Szeg\H{o}'s classical result). In order to achieve this, some knowledge of the asymptotic behavior of the polynomials $w_{2n}$ is required and condition $iv)$ comes in.

	Let $\varphi$ be a positive continuous function on $\Delta$. Let $\lambda_\varphi$ be the (unitary) equilibrium measure supported on $\Delta$ which solves the equilibrium problem for the logarithmic potential with external field $-\frac{1}{2}\ln \varphi$. It is well known that $\lambda_{\varphi}$ is uniquely determined by the equilibrium conditions on $\Delta$ (see \cite[Theorem \textsc{i}.3]{saff_totik})
	\begin{equation}
		\label{equilibrium}
  		V_{\lambda_\varphi}(x)-\frac{1}{2}\ln \varphi(x)
  		\begin{cases}
  			\leq \gamma, & x\in\supp\lambda_\varphi,\\
  			\geq \gamma, & x\in\Delta\setminus e,\,\, \mbox{cap}(e) = 0,
  		\end{cases}
	\end{equation}
	where $\gamma$ is a constant, $\mbox{cap}(e)$ denotes the logarithmic capacity of $e$, and
	\[ V_{\lambda_\varphi}(u) = \int \ln \frac{1}{|u-x|} \D \lambda_\varphi(x)
	\]
	denotes the logarithmic potential of $\lambda_\varphi$. We will assume that $\varphi$ is such that $\supp \lambda_{\varphi} = \Delta$. (This is true, for example, if $\frac{1}{2}\ln \varphi= V_\rho$ is the logarithmic potential of a measure $\rho$, of total mass $c \leq 1$, supported on an interval disjoint from $\Delta$. In this case, $\lambda_{\varphi}$ is the balayage of $\rho$ on $\Delta$ plus $(1-c)$ times $ \D \eta_\Delta/\pi$, and equality is attained in \eqref{equilibrium} on all  $\Delta$ due to the regularity of the interval $\Delta$ with respect to the Dirichlet problem.)

	Set
	\begin{equation}
		\label{PhiC}
  		\Phi(u):= e^{-v_\varphi(u)},\qquad v_\varphi := V_{\lambda_\varphi} + i\widetilde{V}_{\lambda_\varphi},\qquad  C:=e^{\gamma},
	\end{equation}
	where $\widetilde{V}_{\lambda_\varphi}$ denotes the harmonic conjugate of $ V_{\lambda_\varphi}$ in $ {\C} \setminus \Delta$ (which equals zero when $u > b, \Delta = [a,b]$). Though $\widetilde{V}_{\lambda_\varphi}$ is multi-valued, it has an increment of $2\pi$ if we surround once the interval $\Delta$ in the positive direction; consequently, $\Phi$ is a single-valued analytic function in $\C \setminus \Delta$ with a simple pole at $\infty$ since $\Phi(u) = u + \mathcal{O}(1), u \to \infty$.

	We write $\mu \in \mathcal{S}(\Delta)$ when $\mu$ verifies Szeg\H{o}'s condition on $\Delta$. Then, the Szeg\H{o} function of $\mu$ is defined as
	\begin{equation}
		\label{green}
		\mathsf{G}(\mu,u) := \exp\left[\frac{\sqrt{(u-b)(u-a)}}{2\pi}\int_\Delta\frac{\ln(\sqrt{(b-x)(x-a)}\,\mu'(x))}{x-u}\D\eta_\Delta(x)\right].
	\end{equation}
	The square root outside the integral is taken to be positive for $u > b$ and those inside the integral are positive when $x \in (a,b)$. The Szeg\H{o} function is characterized in terms of a boundary value problem (for details see Section \ref{Szego:fun}).

	\begin{theorem}
 		\label{main_th}
 		Assume that $(\mu_n,w_{2n})_{n \geq 0}$ verifies $i)-iv)$ and $\supp \lambda_\varphi = \Delta$ (see
 \eqref{equilibrium}). Then,
 			\begin{equation}
 			\label{strong}
 			\lim_n \frac{l_n(u)}{C^{n}\Phi^{n}(u)} = \frac{1}{\sqrt{2\pi}}\mathsf{G}(\psi   \mu,u),
 		\end{equation}
 		uniformly on compact subsets of $\Omega$, $\psi \mu$ is the measure with differential expression $\psi\D\mu$,
 and $\psi$ is given by \eqref{def:psi}. Moreover,
	 	\begin{equation}
			\label{constante}
 			\lim_n \frac{\tau_n}{C^{n}} = \frac{1}{\sqrt{2\pi}}\mathsf{G}(\psi  \mu, \infty),
 		\end{equation}
 		and
	 	\begin{equation}
			\label{asintc}
 			\lim_n \frac{L_n(u)}{\Phi^{n}(u)} = \frac{\mathsf{G}(\psi  \mu,u)}{\mathsf{G}(\psi  \mu,\infty)}
 		\end{equation}
 		uniformly on compact subsets of $\Omega$.
 	\end{theorem}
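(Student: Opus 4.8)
The plan is to derive \eqref{strong} by combining the general strong asymptotic of orthogonal polynomials with respect to varying measures (valid under conditions $i)$--$iii)$) with the sharper description of the varying weight carried by condition $iv)$, and then to obtain \eqref{constante} and \eqref{asintc} as elementary consequences of \eqref{strong}.

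First I would invoke the theorem of \cite{lago}, in the sharpened form of \cite{bernardo_lago2,BCL} (see also \cite{stahl}). Under $i)$--$iii)$ this produces the strong asymptotic of $q_n$ in the representation \eqref{asintb}, where the limit is expressed through the Szeg\H{o} function $\mathsf{G}(\mu,\cdot)$ of the limiting measure and a Blaschke-type product built from the zeros $(x_{2n,i})$ of $w_{2n}$ together with the appropriate power of the conformal map $\Psi$. Conditions $i)$ and $ii)$ guarantee that the factor attached to $\mu_n$ converges to $\mathsf{G}(\mu,\cdot)$, while $iii)$ forces the mass carried by the zeros of $w_{2n}$ to accumulate on $\Delta$, so that the limiting geometry is governed by an equilibrium measure on the whole interval. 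This reduces the problem to identifying the limit of the $w_{2n}$-dependent factors in closed form.

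The core of the argument is the passage from \eqref{asintb} to the closed expression in \eqref{strong}, and here condition $iv)$ is decisive. Writing the varying weight as $1/|w_{2n}|\sim\varphi^{n}\psi$ on $(a,b)$, I would exploit the multiplicativity of the Szeg\H{o} function, $\mathsf{G}(\psi\mu',\cdot)=\mathsf{G}(\mu',\cdot)\,\mathsf{G}(\psi,\cdot)$, to split the contribution of the weight: the factor $\psi$ combines with $\mathsf{G}(\mu,\cdot)$ to give $\mathsf{G}(\psi\mu,\cdot)$, the Szeg\H{o} function appearing in \eqref{strong}, while the exponentially growing factor $\varphi^{n}$ plays the role of an external field $-\tfrac12\ln\varphi$ whose weighted equilibrium measure is precisely $\lambda_\varphi$. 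Using the equilibrium conditions \eqref{equilibrium}, which hold on all of $\Delta$ because $\supp\lambda_\varphi=\Delta$, one identifies the geometric normalization as $C^{n}\Phi^{n}$ with $\Phi=e^{-v_\varphi}$ and $C=e^{\gamma}$: indeed $|C\Phi(x)|=e^{\gamma-V_{\lambda_\varphi}(x)}=\varphi^{-1/2}(x)$ for $x\in\Delta$, so that $|C^{n}\Phi^{n}(x)|^{2}\,\varphi^{n}(x)\psi(x)\mu'(x)=\psi(x)\mu'(x)$ and the exponential factor $\varphi^{n}$ is exactly absorbed, as it must be for a sequence of orthonormal polynomials. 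The role of the integrated hypothesis \eqref{rest_var_w} is to upgrade the pointwise convergence $\varphi^{n}|w_{2n}|\to1/\psi$ to convergence of the associated Szeg\H{o} integrals, which is what permits the exchange of the limit with the Szeg\H{o}-function representation and pins down the constant $1/\sqrt{2\pi}$ without loss near the endpoints $a,b$.

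Granting \eqref{strong}, the remaining assertions are immediate. Since $q_n=\tau_n Q_n$ with $Q_n$ monic, $q_n(z)=\tau_n z^{n}+\cdots$, and because $\Phi(z)=z+\mathcal{O}(1)$ as $z\to\infty$ one has $\Phi^{n}(z)\sim z^{n}$; letting $z\to\infty$ in \eqref{strong} therefore gives $\tau_n/C^{n}\to\tfrac{1}{\sqrt{2\pi}}\,\mathsf{G}(\psi\mu,\infty)$, which is \eqref{constante}. Dividing \eqref{strong} by \eqref{constante} and using $Q_n=q_n/\tau_n$ yields \eqref{asintc}. I expect the main obstacle to be the middle step: rigorously controlling the behavior of $1/|w_{2n}|$ near the endpoints of $\Delta$, where condition $iv)$ supplies only uniform convergence on compact subsets of $(a,b)$, so that the endpoint contributions to the Szeg\H{o} integrals must be handled through \eqref{rest_var_w} in order to match the Blaschke-product limit of \eqref{asintb} with the equilibrium-field normalization $C^{n}\Phi^{n}$.
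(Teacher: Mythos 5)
Your proposal is correct and follows essentially the same route as the paper: the paper also quotes the result of \cite{bernardo_lago2} as Lemma \ref{LB_Blaschke}, then proves (Lemma \ref{l1}) that $\lim_n C^{2n}\Phi^{2n}(u)B_{2n}(u)/w_{2n}(u)=\mathsf{G}^{-2}(\psi,u)$ by exactly the mechanism you sketch --- outer-function (Szeg\H{o}) representations of $\Psi^{2n}B_{2n}/w_{2n}$ and of $C^{2n}\Phi^{2n}/\Psi^{2n}$ via the equilibrium identity $|C\Phi/\Psi|=\varphi^{-1/2}$ on $\Delta$, with \eqref{rest_var_w} supplying the endpoint control you correctly flag as the delicate point --- and then multiplies the two limits, using $\mathsf{G}(\mu,\cdot)\,\mathsf{G}(\psi,\cdot)=\mathsf{G}(\psi\mu,\cdot)$. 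Your derivation of \eqref{constante} and \eqref{asintc} from \eqref{strong} (evaluation at $\infty$, then division) is also precisely the paper's.
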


 The following result is obtained from Theorem \ref{main_th}. It is in the spirit of \cite[Theorem 14.3]{Totik}. The  assumptions  have points in common but they are not the same. In some regards the conditions in \cite{Totik} are more general, in others our assumptions are weaker. The most notable difference  is that in \cite[Theorem 14.3]{Totik} the measure $\mu$ is required to be absolutely continuous with respect to the Lebesgue measure whereas we do not need this restriction.

	\begin{theorem}
		\label{cor:a}
		Let $(\mu_n)_{n \geq 0}$ be a sequence of measures verifying $i)-ii)$. Let $\tau, \D \tau = v \D x,$ be a probability
measure on $\Delta$ such that $\supp \tau = \Delta,  v $ is continuous on $\Delta,$ and let there be constants $A, \beta >
-1,$ and $\beta_0$ such that
		\begin{equation}
			\label{cond:a}
			A^{-1}((b-x)(x-a))^{\beta_0}\leq v(x) \leq A((b-x)(x-a))^\beta, \qquad x \in (a,b).
		\end{equation}
		Set
		\[ \Phi_\tau(u) = \exp{(-V_{\tau}(u) - i\widetilde{V}_{\tau}(u))},
		\]
		where $\widetilde{V}_{\tau}$ is the harmonic conjugate in $\C \setminus \Delta$ of the logarithmic potential
${V_{\tau}} $. Then
		\begin{equation}
 			\label{strong:a}
 			\lim_n \frac{p_n(u)}{ \Phi_\tau^{n}(u)} = \frac{1}{\sqrt{2\pi}}\mathsf{G}(\mu,u),
 		\end{equation}
 		uniformly on compact subsets of $\Omega$, where $p_n$ is the $n$-th orthonormal polynomial verifying
 		\[ \int p_m(x) p_n(x) \frac{\D \mu_n(x)}{|\Phi_\tau^{2n}(x)|} = \left\{
 		\begin{array}{ll}
 			0, & m < n, \\
 			1, & m = n.
 		\end{array}\right.
 		\]
	\end{theorem}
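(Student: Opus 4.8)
The plan is to deduce the statement from Theorem \ref{main_th} by choosing the external field so that $\Phi = \Phi_\tau$, and then absorbing the non-polynomial varying factor $|\Phi_\tau^{2n}|^{-1}$ into the measures $\mu_n$. Set $\varphi := e^{2V_\tau}$. Since $\D\tau = v\,\D x$ with $v$ continuous on $\Delta$ and satisfying \eqref{cond:a} with $\beta > -1$, the potential $V_\tau$ is finite and continuous on $\Delta$, so $\varphi$ is positive and continuous there. I claim $\lambda_\varphi = \tau$: the unit measure $\tau$ satisfies $V_\tau - \tfrac12\ln\varphi = V_\tau - V_\tau = 0$ on all of $\Delta$, hence it fulfills the equilibrium conditions \eqref{equilibrium} with $\gamma = 0$, and by the uniqueness in \cite[Theorem \textsc{i}.3.3]{saff_totik} we obtain $\lambda_\varphi = \tau$, $\supp\lambda_\varphi = \supp\tau = \Delta$, and $C = e^{\gamma} = 1$. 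Comparing \eqref{PhiC} with the definition of $\Phi_\tau$ gives $\Phi = \Phi_\tau$. Finally, because $V_\tau$ is real on $\Delta$, on that interval one has the pointwise identity $|\Phi_\tau(x)|^{2n} = e^{-2nV_\tau(x)} = \varphi^{-n}(x)$, equivalently $\varphi^{n}(x) = |\Phi_\tau^{2n}(x)|^{-1}$; this is the identity that will let me match weights.

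The key step is to discretize $\tau$. I want polynomials $w_{2n}$, $\deg w_{2n} = i_n \le 2n$, whose zeros $(x_{2n,i})$ lie in $\C\diff\Delta$ but accumulate on $\Delta$ with normalized counting measure tending to $\tau$, and such that
\[ \lim_n \varphi^{n}(x)\,|w_{2n}(x)| = 1/\psi(x) \]
uniformly on compact subsets of $(a,b)$ for some positive continuous $\psi$, together with the Szeg\H{o}-type control \eqref{rest_var_w}; moreover the nodes must be placed so that the divergence required in $iii)$ holds. This is a Totik-type discretization of the equilibrium measure $\tau$: the zeros are placed on a grid just off $\Delta$ adapted to $\tau$, the bounds \eqref{cond:a} (in particular $\beta > -1$ and the two-sided polynomial control of $v$ at the endpoints) being exactly what is needed to keep the potential error, and hence $\psi$, continuous and integrable against the Chebyshev weight up to the endpoints. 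I expect this construction, together with the simultaneous verification of $iii)$ and $iv)$, to be the main obstacle; the remaining conditions reduce to routine convergence estimates.

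With such $w_{2n}$ in hand, define the measures $\D\nu_n := \bigl(\varphi^{n}|w_{2n}|\bigr)\,\D\mu_n$. By the displayed limit, $\varphi^{n}|w_{2n}| \to 1/\psi$ uniformly on compacts of $(a,b)$, and since $(\mu_n)$ verifies $i)-ii)$ and $\psi$ is positive and continuous, $(\nu_n)$ verifies $i)-ii)$ with weak-star limit $\nu := \tfrac1\psi\mu$ (the endpoint control again supplied by \eqref{cond:a}); note that $\nu \in \mathcal{S}(\Delta)$ because $\ln\psi$ is bounded. Conditions $iii)-iv)$ hold by construction, as they involve only $w_{2n}$, $\varphi$ and $\psi$. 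Hence $(\nu_n, w_{2n})$ satisfies $i)-iv)$ and Theorem \ref{main_th} applies.

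Now observe the exact identity of varying weights: using $\varphi^{n} = |\Phi_\tau^{2n}|^{-1}$ on $\Delta$,
\[ \frac{\D\nu_n}{|w_{2n}|} = \frac{\varphi^{n}|w_{2n}|}{|w_{2n}|}\,\D\mu_n = \varphi^{n}\,\D\mu_n = \frac{\D\mu_n}{|\Phi_\tau^{2n}|}. \]
Therefore the orthonormal polynomial produced by Theorem \ref{main_th} for $(\nu_n, w_{2n})$ is precisely $p_n$. Its limit, by \eqref{strong}, is $\tfrac{1}{\sqrt{2\pi}}\mathsf{G}(\psi\nu,\cdot)$; since $\psi\nu = \psi\cdot\tfrac1\psi\mu = \mu$ we have $\mathsf{G}(\psi\nu,\cdot) = \mathsf{G}(\mu,\cdot)$. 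Combined with $C = 1$ and $\Phi = \Phi_\tau$, formula \eqref{strong} becomes exactly \eqref{strong:a}, which proves the theorem.
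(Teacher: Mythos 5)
Your overall architecture is exactly the paper's: set $\varphi = e^{2V_\tau}$, observe that $\tau$ itself solves the equilibrium problem \eqref{equilibrium} so that $\lambda_\varphi = \tau$, $\gamma = 0$, $C = 1$ and $\Phi = \Phi_\tau$, then discretize $\Phi_\tau^{2n}$ by polynomials $w_{2n}$, absorb the ratio $\varphi^n|w_{2n}|$ into the measures, and apply Theorem \ref{main_th}, cancelling $\psi$ at the end. The bookkeeping identity $\D\nu_n/|w_{2n}| = \D\mu_n/|\Phi_\tau^{2n}|$ and the cancellation $\mathsf{G}(\psi\nu,\cdot) = \mathsf{G}(\mu,\cdot)$ are correct. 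However, there is a genuine gap where you yourself flag ``the main obstacle'': you never produce the polynomials $w_{2n}$, you only describe the properties they should have. That existence statement is the entire content of the reduction, and it is not routine: one needs, \emph{simultaneously}, zeros in $\C\setminus\Delta$ verifying the Blaschke-type divergence $iii)$, uniform convergence of $\varphi^n|w_{2n}|$ on compact subsets of $(a,b)$, and the Szeg\H{o}-type integral condition \eqref{rest_var_w} controlling behavior up to the endpoints. The paper fills this hole by invoking \cite[Theorem 10.2]{Totik} (see also \cite[Lemma 9.1]{Totik}): there exist polynomials $H_{n-1}$, $\deg H_{n-1}\le n-1$, non-vanishing on $\Delta$, whose zeros verify $iii)$, with $|H_{n-1}/\Phi_\tau^n|\le 1$ on $(a,b)$, $|H_{n-1}/\Phi_\tau^n|\to 1$ uniformly on compacts of $(a,b)$, and $\int_a^b \ln|H_{n-1}/\Phi_\tau^n|\,\D x/\sqrt{(b-x)(x-a)} \to 0$; one then takes $w_{2n} = H_{n-1}^2$ and gets $\psi \equiv 1$. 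The hypothesis \eqref{cond:a}, with $\beta > -1$ and the two-sided power bounds, is precisely what makes Totik's theorem applicable; in your write-up it plays only a rhetorical role.

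A second, smaller defect is downstream of the same omission. You assert that $(\nu_n)$ verifies $i)$--$ii)$ ``with endpoint control again supplied by \eqref{cond:a}.'' But the convergence $\varphi^n|w_{2n}|\to 1/\psi$ you postulated is only uniform on compact subsets of $(a,b)$, and that alone does not give the $L^1$ condition $\int|\nu_n'-\nu'|\,\D x \to 0$ in $i)$: you need a dominating bound near the endpoints. In the paper this is exactly the role of the uniform inequality $|H_{n-1}/\Phi_\tau^n|\le 1$ (property \eqref{eq:d}), which together with \eqref{eq:e} yields $i)$, while \eqref{eq:f} added to condition $ii)$ for $(\mu_n)$ yields $ii)$ for the modified measures. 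So the proof is completable, and along the very route you chose, but both the existence of the discretization and the endpoint domination must be supplied (by Totik's results or an equivalent construction) rather than assumed.
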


\subsection{Outline and structure of the paper}

Section 2 is dedicated to the proof of Theorems \ref{main_th} and \ref{cor:a}. These results are used in Section 3 in the proof of Theorem \ref{mainbiort} but they have independent interest and may be employed to obtain exact estimates of the rate of convergence of  multipoint Pad\'e and Hermite-Pad\'e approximations.

Section 3 is devoted to the study of the strong asymptotic of a sequence of Hermite-Pad\'e polynomials intimately connected with the Cauchy biorthogonal polynomials defined above. The proof of Theorem \ref{mainbiort} is not simple because it requires several steps  some of which are quite technical. A brief description of the idea of the proof is helpful for a better understanding of the whole paper.

In \cite{LMS} the authors noticed the connection between Cauchy biorthogonal polynomials and a  so called
multilevel  Hermite-Padé approximation problem. For convenience of the reader,
we summarize this relationship in subsection 3.1. In subsections 3.2 and 3.3 we prove some useful formulas verified by these approximants and their associated polynomials. In particular,
the biorthogonal polynomials $(P_n,Q_n)_{n\geq 0}$ are identified with certain   Hermite-Pad\'e polynomials which turn out to be orthogonal  with respect to varying measures. So the initial problem is reduced to finding the strong asymptotic of the associated Hermite-Pad\'e polynomials.

The results in \cite{LMS}  clearly indicate which functions $\Phi_1,\Phi_2$ must be taken to compare the Hermite-Pad\'e polynomials to establish their strong asymptotic behavior.  This is explained in detail in subsection 3.4. These functions are the exponentials of the complex potentials associated with the equilibrium measures of the vector  equilibrium problem used to describe the logarithmic asymptotic of the same polynomials.

Because of the definition of biorthogonality, if the role of the measures $\sigma_1,\sigma_2$ is interchanged then the polynomials $P_n,Q_n$ are also interchanged; therefore, if the strong asymptotic of the sequence $(Q_n)_{n\geq 0}$ is obtained then that of the sequence $(P_n)_{n\geq 0}$ readily follows. Thus, we focus on $(Q_n)_{n\geq 0}$; more precisely, on their associated Hermite-Pad\'e polynomials.

To obtain their strong asymptotic we adapt a very clever method devised by A.I. Aptekarev to obtain the strong asymptotic of type II Hermite-Pad\'e polynomials for Angelesco \cite{sasha2} and Nikishin \cite{sasha} systems of measures using fixed point theorems. To undestand what this is about,  we need to advance some formulas.

We show that for each $n \geq 1$ there exist polynomials $Q_{n,1}, Q_{n,2}$, with $Q_{n,2} = Q_n$, and a continuous function $h_{n,1}$ on $\Delta_1$ such that
\[
0 = \int x^{\nu}  Q_{n,2}(x)  \frac{{\rm d}\sigma_{2}(x)}{|Q_{n,1}(x)|} , \qquad
0 = \int x^{\nu}  Q_{n,1}(x)    \frac{|h_{n,1}(x)|{\rm d}\sigma_{1}(x)}{|Q_{n,2}(x)|}, \qquad \nu = 0,\ldots, n-1
\]
where $\lim_n |h_{n,1}(x)| =  (\sqrt{x-a_2)(b_2 -x)})^{-1}$ uniformly on $\Delta_1$.  Theorem \ref{main_th} cannot be used directly because the polynomial which in one relation is orthogonal in the other relation appears in the denominator of the varying part of the measure. To handle this, we (temporarily) unlink this inter dependence.  For that purpose, for each $n \geq 0$ a (non-linear) operator $\widetilde{T}_n$ is introduced, defined on the set of all pairs $(\hat{Q}_1,\hat{Q}_2)$ of monic polynomials with real coefficients of degree $n$ with zeros in the complement of $\Delta_2$ and $\Delta_1$, respectively, such that $\tilde{T}_n(\hat{Q}_1,\hat{Q}_2) = (Q_1^*,Q_2^*)$ verifies
\begin{equation} \label{relort}
0 = \int x^{\nu}  Q^*_{2}(x)  \frac{{\rm d}\sigma_{2}(x)}{|\hat{Q}_{1}(x)|} , \qquad
0 = \int x^{\nu}  Q^*_{1}(x)    \frac{|h_{n,1}(x)|{\rm d}\sigma_{1}(x)}{|\hat{Q}_{2}(x)|}, \qquad \nu = 0,\ldots, n-1.
\end{equation}
Notice that $(Q_{n,1},Q_{n,2})$ is a fixed point of $\tilde{T}_n$. (Indeed, in subsection 3.6 a more general operator is defined where it is only required that the sequence $(|h_{n,1}|)_{n\geq 0}$ converges uniformly to a positive continuous on $\Delta_1$. This extension allows to cover other possible applications we have in mind.)

Take sequences of denominators $(\hat{Q}_{n,1})_{n\geq 1}, (\hat{Q}_{n,2})_{n\geq 1},$ and their associated by \eqref{relort} orthogonal polynomials $({Q}^*_{n,1})_{n\geq 0}, (Q^*_{n,2})_{n\geq 1}$. If we suppose that $\hat{g}_1,\hat{g}_2$ are the uniform limits on $\Delta_2$ and $\Delta_1$, respectively, of the sequences $(\hat{Q}_{n,1}/\Phi_1^n)_{n\geq 0}, (\hat{Q}_{n.2}/\Phi_2^n)_{n\geq 0}$,  using Theorem
\ref{main_th} in subsection 3.6 we obtain the strong asymptotic $(g^*_1,g^*_2)$ of $({Q}^*_{n,1}/\Phi_1^n)_{n\geq 0}, ({Q}^*_{n.2}/\Phi_2^n)_{n\geq 0}$. Previously, in subsection 3.5 using Theorem \ref{cor:a} we show that any pair $(\hat{g}_1,\hat{g}_2)$ of Szeg\H{o} functions on $\C \setminus \Delta_1, \C \setminus \Delta_2$, respectively, can be obtained as strong limits of the initial pair of sequences. From the boundary properties verified by Szeg\H{o} functions it turns out that  $(\hat{g}_1,\hat{g}_2)$ and $( {g}^*_1,{g}^*_2)$ are connected by the boundary value equations
		\begin{align*}
			|g_1^*(x)|^2 =& \frac{\hat{g}_2(x)\sqrt{(b_2 - x)(x-a_2)}}{\sqrt{(b_1 - x)(x-a_1)}\sigma'_1(x)}, \quad
			\mbox{a.e. on} \quad [a_1,b_1] = \Delta_1,\\ % \label{oper_T1}
			|g_2^*(x)|^2 =& \frac{\hat{g}_1(x)}{\sqrt{(b_2 - x)(x-a_2)}\sigma'_2(x)}, \quad
			\mbox{a.e. on} \quad [a_2,b_2] = \Delta_2 % \label{oper_T2}
		\end{align*}	
where $(\sigma_1,\sigma_2)\in S(\mathbf{\Delta})$. This motivates the introduction of another operator in subsection 3.7 $T(\hat{g}_1,\hat{g}_2)= ({g}^*_1,\hat{g}^*_2)$ which on an appropriate metric space of functions is contractive and due to Banach's fixed point theorem it has a unique fixed point. (Indeed in subsection 3.7 a more general situation is considered but we limit ourselves here to the operator which is relevant in the case of multi level Hermite-Pad\'e polynomials.) The final step consists in showing that any neighborhood of the fixed point of the operator $T$ contains fixed points of the operators $\tilde{T}_n$ for all sufficiently large $n$. This is done in Theorem \ref{main} of subsection 3.8 using Brouwer's fixed point theorem. Theorem \ref{ML} is a simple corollary of Theorem \ref{main} applied to  multi level Hermite Pad\'e polynomials.

In subsection 3.10 we return to the biorthogonal polynomials. Since $Q_n = Q_{n,2}$, Theorem \ref{ML} gives directly the asymptotic of the $Q_n$. Then, we briefly discuss what needs to be done for the polynomials $P_n$. In subsection 3.10 we derive the strong asymptotic of other functions related with the multi level Hermite Pad\'e approximation problem and the final section 3.11 contains a different approach for defining the comparison functions $\Phi_1,\Phi_2$ on the basis of a three sheeted Riemann surface of genus zero.

\section{Strong asymptotic of orthogonal polynomials with varying measures}

	As mentioned above our goal here is to prove Theorems \ref{main_th} and \ref{cor:a}. They are essential in the proof of Theorem \ref{mainbiort}, but have independent interest and may find other applications. We begin explainig our choice of Szeg\H{o} function for measures supported on an interval of the real line.

\subsection{The Szeg\H{o}\label{Szego:fun} function}
	Let $ \mu \in \mathcal{S}([-1,1])$; that is
	\[ \int_{-1}^1 \frac{\ln \mu'(x) \D x}{\sqrt{1-x^2}} > - \infty.
	\]
	On the unit circle $\mathbb{T}$, one can define a symmetric measure $\sigma$ with the property that $\sigma(B) = \mu(B^*)$ whenever $B$ is a Borel set contained either in the upper or lower half of the unit circle and $B^*$ is its orthogonal projection on $[-1,1]$. It readily follows that
	\[ \sigma'(e^{it}) = |\sin t|\,\mu'(\cos t), \qquad t \in [0,2\pi]
	\]
	where $\sigma'$ and $\mu'$ denote the Radon-Nikodym derivatives of $\sigma$ and $\mu$ with respect to the Lebesgue measure on $\mathbb{T}$ and $[-1,1]$, respectively. If $\zeta = e^{it}$ and $x = \mbox{Re} (\zeta) = \cos t$, we can also write
	\[ \sigma'(\zeta) = \sqrt{1- x^2}\,\mu'(x), \qquad \zeta \in \mathbb{T}, \qquad x = \mbox{Re}(\zeta).
	\]
	Let
	$$\mathsf{S}(\sigma,z) = \exp\left[\frac{1}{4\pi}\int_\mathbb{T} \frac{\zeta + z}{\zeta -z}{\ln \sigma'(\zeta) } |\D\zeta|
\right],$$
	be the (standard) Szeg\H{o} function associated with the measure $\sigma$. Notice that if $\mu$ satisfies the Szeg\H{o} condition on $[-1,1]$ then $\int_{\mathbb{T}} \ln \sigma'(\zeta) |\D \zeta| > -\infty$; that is, $\sigma$ verifies Szeg\H{o}'s condition on $\mathbb{T}$.

	In general, when $\supp \mu = \Delta =[a,b]$ (not necessarily $[-1,1]$), we define $\sigma$ as it was done before out of the measure $\widetilde{\mu}$ supported on $[-1,1]$ such that $\widetilde{\mu}(B) = \mu(\{x\in [a,b]: \frac{2}{b-a}(x - \frac{b+a}{2})\in B\})$, for every Borel set $B \subset [-1,1]$. In this case
	\[ \sigma'(e^{it}) = \sqrt{(b-x)(x-a)} \mu'(x), \qquad x = \frac{b-a}{2} \cos t + \frac{b+a}{2}.
	\]

	We wish to define a Szeg\H{o} function $\mathsf{G}(\mu,\cdot)$ with respect to the measure $\mu$ so that 	
	\[ \mathsf{G}(\mu,u) = \mathsf{S}(\sigma, \Psi(u)), \qquad u \in \overline{\C} \setminus \Delta.
	\]
	Then, from known properties of the Szeg\H{o} function for measures on the unit circle, we have
	\begin{equation}
		\label{Glimit}
		\lim_{u \to x} |\mathsf{G}(\mu, u)|^2 = \lim_{u\to x}|\mathsf{S}(\sigma,\Psi(u))|^2 = 1/\sigma'(\zeta)
	 = (\sqrt{(b-x)(x-a)}\,\mu'(x))^{-1},\quad \mbox{a.e. on}\,\, \Delta,
	\end{equation}
	where $ \zeta = \Psi(x)$ ($\Psi$ can be extended continuously to $\Delta$ as usual assuming that the interval has two sides and since $\sigma$ is symmetric with respect to the real line we can take $\zeta$ either on the upper half or the lower half of $\mathbb{T}$). Straightforward calculations show that the explicit expression of $\mathsf{G}(\mu, u)$ is \eqref{green}.

	The property stated in \eqref{Glimit} also serves to characterize the Szeg\H{o} function associated with a measure $\mu \in \mathcal{S}(\Delta)$ by a boundary value problem. This is, given $\mu \in \mathcal{S}(\Delta)$ find a holomorphic function $g$ in $\overline{\C}\setminus\Delta$ such that
	\begin{equation}
		\label{szego_BVP}
		\begin{cases}
			g(u)\neq 0 \textrm{ for } u \in \overline{\C}\setminus\Delta;\\
			g(\infty)>0;\\
			\lim_{u \to x} |g(u)|^2 =  (\sqrt{(b-x)(x-a)}\,\mu'(x))^{-1},\quad \textrm{ a.e. on }\,\, \Delta.
		\end{cases}
	\end{equation}
	This problem has as solution $g(u) = \mathsf{G}(\mu, u)$ (for an expanded exposition see \cite[Ch.
\textsc{xvi}]{szego}).	

	When $h$ is a function on $\Delta$ such that $\ln h $ is integrable with respect to   $\D\eta_\Delta(x)$ we also write
	$$\mathsf{G}(h,u) = \exp\left[\frac{\sqrt{(u-b)(u-a)}}{2\pi}\int_\Delta\frac{\ln h(x) }{x-u}\D\eta_\Delta(x)\right],\quad u \in \overline{\mathbb{\C}} \setminus \Delta.$$
	These functions are related with outer functions (see \cite[Def. 17.14]{rudin}) whose analytic representation is
	$$g(h,u) = c\exp\left[\frac{\sqrt{(u-b)(u-a)}}{2\pi}\int_\Delta\frac{\ln h(x) }{x-u}\D\eta_\Delta(x)\right],\quad u \in \overline{\mathbb{\C}} \setminus \Delta,$$
	where $c\in\mathbb{T}$.

	Notice that in the definition of the Szeg\H{o} function (which is the square of an outer function) with respect to a measure $\mu$ we do not take the Radon-Nikodym derivative $\mu'$ of $\mu$ with respect to the Lebesgue measure $\D x$ but instead with respect to $\D\eta_\Delta(x)$ which is precisely $\sqrt{(b-x)(x-a)}\mu'(x)$.

\subsection{A starting point}

Let  $x_{2n,i}, 2n-i_n +1\leq i \leq 2n,$ denote the zeros of $w_{2n}$. If $i_n < 2n$ we
define $x_{2n,i} = \infty, 1\leq i \leq 2n - i_n$. Set
  	$$B_{2n}(u) := \prod_{i=1}^{2n}\frac{\Psi(u)-\Psi(x_{2n,i})}{1-\overline{\Psi(x_{2n,i})}\Psi(u)}.$$
When $x_{2n,i} = \infty$ the corresponding factor in the Blaschke product is replaced by $1/\Psi(u)$.

In  \cite[Theorem 4]{bernardo_lago2} a strong asymptotic result is given. We state it as a lemma for convenience of the reader and further reference in the paper.
\begin{lemma}
  	\label{LB_Blaschke}
  	Assume that $(\mu_n ,w_{2n} )_{n\geq 0}$ verifies $i)-iii)$  and $l_n$ is the $n$-th orthonormal polynomials associated with \eqref{def:Ln}. Then
  	\begin{equation}
  		\label{asintb}
  		\lim_n \frac{l_n^2(u)}{w_{2n}(u)}B_{2n}(u) =  \frac{1}{2\pi}\mathsf{G}^2(\mu,u),
  	\end{equation}
  	uniformly on compact subsets of $\Omega$.
\end{lemma}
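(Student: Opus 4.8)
The plan is to reduce the statement to the convergence of a single holomorphic function and its boundary behaviour, and then to combine an extremal (variational) argument with a normal families argument. Set
\[ F_n(u) := \frac{q_n^2(u)}{w_{2n}(u)}B_{2n}(u). \]
The first observation is that $F_n \in \mathcal{H}(\Omega)$ and is zero-free there: the poles of $1/w_{2n}$ at the finite zeros $x_{2n,i}$ are exactly cancelled by the zeros of $B_{2n}$, the zeros of $q_n^2$ all lie in $(a,b)\subset\Delta$, and the behaviour at $\infty$ is controlled because the degrees match ($\deg q_n^2 = 2n$, while the order of the factors $1/\Psi$ attached to each $x_{2n,i}=\infty$ compensates the growth of $q_n^2/w_{2n}$, so that $F_n(\infty)$ is finite and nonzero). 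Since $|\Psi(x)|=1$ on $\Delta$, each Blaschke factor has modulus one there, hence on the two sides of $\Delta$ one has $|F_n(x)| = q_n^2(x)/|w_{2n}(x)|$. The target $G:=\tfrac{1}{2\pi}\mathsf{G}^2(\mu,\cdot)$ is likewise holomorphic and zero-free in $\Omega$, and by \eqref{Glimit} its boundary modulus is $|G(x)| = (2\pi\sqrt{(b-x)(x-a)}\,\mu'(x))^{-1}$. Thus everything is arranged so that $F_n$ and $G$ have comparable boundary moduli, and it remains to prove $F_n\to G$ on compact subsets of $\Omega$, equivalently $R_n:=F_n/G\to 1$.

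Next I would fix the value at $\infty$, which amounts to the strong asymptotic of the leading coefficient $\tau_n$; this is the heart of Szeg\H{o}'s method. For the lower estimate I would use the minimality of $Q_n$, namely $\tau_n^{-2} = \int Q_n^2\,\D\mu_n/|w_{2n}| \le \int T_n^2\,\D\mu_n/|w_{2n}|$ for every monic $T_n$ of degree $n$, choosing $T_n$ built from the reciprocal Szeg\H{o} function of $\mu$ together with the polynomial whose zeros are the $x_{2n,i}$, so that $T_n^2/|w_{2n}|$ is close to $|G|$ on $\Delta$; passing to the limit via condition i) (weak-$*$ convergence together with $L^1$-convergence of the densities) and condition ii) (the Szeg\H{o} integral) then yields a one-sided bound for $\tau_n$ in the correct normalization. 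For the matching bound I would apply Jensen's inequality to the harmonic function $\ln|F_n|$ represented through the harmonic measure of $\Omega$ at $\infty$, which is the Chebyshev measure $\D x/(\pi\sqrt{(b-x)(x-a)})$, turning $\int \ln(q_n^2/|w_{2n}|)\,\D\omega_\infty$ into $\ln\int(\cdots)\,\D\omega_\infty$ and invoking the orthonormality $\int q_n^2\,\D\mu_n/|w_{2n}| = 1$. Together these pin down $\lim_n F_n(\infty)$, equivalently identify $\lim_n \ln|R_n(\infty)|=0$.

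With the value at $\infty$ in hand, I would finish by a normality-and-uniqueness argument. The integral bound from orthonormality, the representation of the harmonic function $\ln|R_n|$, and the one-sided estimates show that $\{R_n\}$ is uniformly bounded above and below on compact subsets of $\Omega$, hence a normal family of zero-free holomorphic functions. Any locally uniform limit $R$ is holomorphic and zero-free, lies in the Smirnov class of $\Omega$, satisfies $R(\infty)=1$, and has $|R|=1$ a.e. on $\Delta$ (this last property is where the two estimates must coincide, forcing Jensen to become an asymptotic equality and the integrand $\ln|R_n|$ to flatten out). Consequently $\ln|R|$ is a bounded harmonic function on $\Omega$ with a.e.-zero boundary values, so $\ln|R|\equiv 0$, $R$ is a unimodular constant, and $R\equiv 1$ by its value at $\infty$. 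Since every subsequential limit equals $1$, the whole sequence satisfies $R_n\to 1$, that is $F_n\to G$ locally uniformly in $\Omega$, which is \eqref{asintb}.

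The genuine obstacle I expect is the uniform control of the Blaschke product $B_{2n}$ as $n\to\infty$: condition iii), the divergence $\sum_i(1-|\Psi(x_{2n,i})|^{-1})=\infty$, must be used to guarantee that the contribution of $B_{2n}$ to $\ln|F_n|$ at interior points does not destroy the balance with the growth of $q_n^2/|w_{2n}|$, equivalently that the limiting zero-counting measure of $q_n$ is the full equilibrium measure of $\Delta$ and no mass escapes toward the zeros of $w_{2n}$. Intertwined with this is the need to pass the Szeg\H{o}-type integrals to the limit when $\mu_n$ is merely weak-$*$ convergent to $\mu$ with $L^1$-convergent densities and may carry a singular part; making the two one-sided estimates meet exactly, so that Jensen becomes an asymptotic equality, is the delicate point, and it is precisely what conditions i)--iii) are designed to secure.
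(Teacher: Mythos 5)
You should first be aware that the paper contains no proof of Lemma~\ref{LB_Blaschke} to compare against: it is quoted verbatim from \cite[Theorem 4]{bernardo_lago2} (the authors only correct a typo in condition $ii)$ of that reference), and its proof---which goes back to \cite{lago}, with an alternative in \cite{stahl}---is itself a substantial piece of Szeg\H{o} theory for varying measures. Your skeleton (extremality of $\tau_n^{-2}$ with trial polynomials for one bound, Jensen's inequality against the harmonic measure $\D\omega_\infty = \D x/(\pi\sqrt{(b-x)(x-a)})$ for the other, then an upgrade from the point $\infty$ to locally uniform convergence) is indeed the skeleton of that classical method, and your observations about $F_n$ being holomorphic, zero-free and finite at $\infty$, with $|F_n| = q_n^2/|w_{2n}|$ on $\Delta$, are correct.

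However, two of your steps are genuine gaps, not omitted routine detail. First, the concluding normal-families argument fails as stated: neither membership in the Smirnov class nor the a.e.\ boundary modulus survives locally uniform limits. On the disk, the outer, zero-free functions $\exp\left(-\frac{1+z}{1-z+\varepsilon}\right)$ converge locally uniformly, as $\varepsilon \to 0^+$, to the singular inner function $\exp\left(-\frac{1+z}{1-z}\right)$: boundary $\ln$-modulus mass escapes into a singular component invisible to interior convergence. Pinning $R(\infty)=1$ does not exclude this, because a \emph{signed} singular measure of zero total mass still produces a nonconstant harmonic contribution. What must actually be proved is quantitative convergence of the boundary data, e.g.\ $h_n := q_n^2 \mu_n' \,/\,(|w_{2n}|\,\omega_\infty') \to 1$ in $L^1(\omega_\infty)$, extracted from the asymptotic equality in Jensen via $t-1-\ln t \geq 0$; once that is in hand, the exact outer (Poisson) representation of $\ln|F_n|$ gives locally uniform convergence in $\Omega$ directly and the normal-families step is superfluous. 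Even this is delicate under $i)$--$ii)$ alone: transferring $\int|\mu_n'-\mu'|\,\D x \to 0$ into statements against $\D\omega_\infty$ is nontrivial since the Chebyshev weight blows up at $a,b$, $\mu'$ may vanish, and $q_n^2/|w_{2n}|$ admits no uniform sup bound on $\Delta$. Second, you misidentify the mechanism of condition $iii)$: it is not about the zero-counting measure of $q_n$, and your trial polynomial $T_n$ cannot be ``built from the polynomial whose zeros are the $x_{2n,i}$'' because $\sqrt{w_{2n}}$ is not a polynomial (the zeros need not pair up) and the $x_{2n,i}$ may approach $\Delta$. After mapping to the unit circle one has $|w_{2n}(x)| = c_n|V_{2n}(z)|^2$ on $|z|=1$, where $V_{2n}$ has degree $i_n$ and zeros at the reflected points $1/\overline{\Psi(x_{2n,i})}$ inside the disk; the trial objects are rational functions with these prescribed poles, and the divergence $\sum_i (1 - 1/|\Psi(x_{2n,i})|) = \infty$ is exactly the \emph{failure} of the Blaschke condition for the reflected points, which is what forces the associated Blaschke products to vanish asymptotically and the trial spaces to become dense in the relevant $H^2$ space. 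That density is what lets the constrained extremal value converge to the unconstrained Szeg\H{o} value; if the sum stayed bounded, the trial spaces would span a proper closed subspace and \eqref{asintb} would be false. Without this ingredient your two one-sided bounds cannot be made to meet, so the proposal as written does not close.
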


We wish to point out that in \cite[Theorem 4]{bernardo_lago2} there is a typo when writing the condition $ii)$. There, it appears in terms of the Lebesgue measure $\D x$ instead of the Chebyshev measure $\D \eta_{\Delta}$. Except for that, the proof given is correct.

When $\mu_n = \mu$ is fixed and $w_{2n} \equiv 1$ (so that $B_{2n} \equiv 1/\Psi^{2n}$) we retrieve the standard result for the strong asymptotic of orthogonal polynomials with respect to $\mu \in \mathcal{S}(\Delta)$. The drawback of Lemma \ref{LB_Blaschke} is the appearance of the Blaschke product on the left hand side of \eqref{asintb}, but nothing can be done to simplify the expression unless some restriction is imposed on the asymptotic behavior of the sequence of polynomials $(w_{2n})_{n\geq 0}$.

If $\D \mu_n = h_n \D \overline{\mu}$, where $\overline{\mu}$ is a fixed measure satisfying Szeg\H{o}'s condition on $\Delta$, $(h_n)_{n\geq 0}$ is a sequence of positive continuous functions such that $\lim_n h_n = h$,  and $\lim_{n \to \infty} |w_{2n}(x)|\varphi^n(x) = 1/\psi(x) > 0$  uniformly on $\Delta$, the right hand side of \eqref{strong} becomes $\frac{1}{\sqrt{2\pi}}\mathsf{G}(\psi h  \overline{\mu},u)$.

\subsection{Proof of Theorem \ref{main_th}}
We begin with an auxiliary lemma.

\begin{lemma}
	\label{l1}
	Assume that the sequence of polynomials $(w_{2n})_{n \geq 0}$ verifies $iv)$. Then
	\begin{equation}
		\label{eq:a}
		\lim_{n \to \infty} C^{2n} \Phi^{2n}(u) \frac{B_{2n}(u)}{w_{2n}(u)} = \mathsf{G}^{-2}(\psi,u)
	\end{equation}
	uniformly on compact subsets of $\overline{\C} \setminus \Delta$, where $\Phi$ and $C$ are defined as in \eqref{PhiC}.
\end{lemma}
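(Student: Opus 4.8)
The plan is to recognize the left-hand side of \eqref{eq:a} as a Szeg\H{o} function and then to feed the two parts of hypothesis $iv)$ into the two ingredients needed for convergence of Szeg\H{o} functions. Write $R_n(u) := C^{2n}\Phi^{2n}(u) B_{2n}(u)/w_{2n}(u)$. First I would check that $R_n$ is holomorphic and non-vanishing on all of $\overline{\C}\setminus\Delta$, with a finite non-zero value at $\infty$. Indeed, $\Phi^{2n}$ is holomorphic and non-zero on $\C\setminus\Delta$ with a pole of order $2n$ at $\infty$ (recall $\Phi(u)=u+\mathcal{O}(1)$); the finite zeros of $B_{2n}$ occur exactly at the finite $x_{2n,i}$ and cancel the zeros of $w_{2n}$, while the denominators $1-\overline{\Psi(x_{2n,i})}\Psi(u)$ never vanish because $|\Psi|>1$ off $\Delta$; and counting the factors $1/\Psi(u)$ attached to the points sent to infinity shows that $B_{2n}/w_{2n}$ has a zero of order exactly $2n$ at $\infty$, compensating the pole of $\Phi^{2n}$. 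Next I would compute the boundary modulus of $R_n$ on $\Delta$: since $\supp\lambda_\varphi=\Delta$, the equilibrium identity \eqref{equilibrium} gives $\gamma-V_{\lambda_\varphi}(x)=-\tfrac12\ln\varphi(x)$ on $\Delta$, hence $|C\Phi(x)|^2=1/\varphi(x)$, and since $|\Psi(x)|=1$ on $\Delta$ we also have $|B_{2n}(x)|=1$ there. Therefore
\begin{equation*}
 |R_n(x)|^{2}=\frac{1}{(\varphi^{n}(x)\,|w_{2n}(x)|)^{2}}=:\frac{1}{g_n(x)},\qquad g_n:=(\varphi^{n}|w_{2n}|)^{2},
\end{equation*}
almost everywhere on $\Delta$.

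Both $R_n$ and the Szeg\H{o} function $\mathsf{G}(g_n,\cdot)$ are holomorphic and non-vanishing on the simply connected domain $\overline{\C}\setminus\Delta$, and by the defining property of the Szeg\H{o} function (cf. \eqref{Glimit}) they share the boundary modulus, $|\mathsf{G}(g_n,x)|^2=1/g_n(x)=|R_n(x)|^2$ a.e. on $\Delta$. Consequently $\Theta_n:=R_n/\mathsf{G}(g_n,\cdot)$ is holomorphic, non-vanishing and bounded with $|\Theta_n|=1$ a.e. on $\Delta$; since $\ln|\Theta_n|$ is then a bounded harmonic function with vanishing boundary values, $\Theta_n$ must be a unimodular constant. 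To fix $\Theta_n\equiv 1$ I would exploit the symmetry of the data: $w_{2n}$ has real coefficients, so its zeros are conjugate-symmetric, $\Psi(\bar u)=\overline{\Psi(u)}$, and $C,\Phi,\mathsf{G}(g_n,\cdot)$ are real on $(b,\infty)$; pairing conjugate Blaschke factors shows $R_n$ is real on $(b,\infty)$ as well, so $\Theta_n=\pm1$, and evaluating at $\infty$, where $\mathsf{G}(g_n,\infty)>0$, forces the $+$ sign. Thus $R_n=\mathsf{G}(g_n,\cdot)$.

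Finally I would invoke the continuity of the Szeg\H{o} function under the mode of convergence supplied by $iv)$. Its first part gives $g_n=(\varphi^{n}|w_{2n}|)^2\to\psi^{-2}$ uniformly on compact subsets of $(a,b)$, and \eqref{rest_var_w} yields
\begin{equation*}
 \int_a^b\frac{\ln g_n(x)}{\sqrt{(b-x)(x-a)}}\,\D x=2\int_a^b\frac{\ln(\varphi^{n}|w_{2n}|)}{\sqrt{(b-x)(x-a)}}\,\D x\longrightarrow -2\int_a^b\frac{\ln\psi(x)}{\sqrt{(b-x)(x-a)}}\,\D x=\int_a^b\frac{\ln\psi^{-2}(x)}{\sqrt{(b-x)(x-a)}}\,\D x .
\end{equation*}
This is exactly the situation (pointwise convergence together with convergence of the Chebyshev-weighted integrals of the logarithms) in which the Cauchy-type integral in the exponent of \eqref{green} passes to the limit, giving $\mathsf{G}(g_n,\cdot)\to\mathsf{G}(\psi^{-2},\cdot)=\mathsf{G}^{-2}(\psi,\cdot)$ uniformly on compact subsets of $\overline{\C}\setminus\Delta$. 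Combining this with $R_n=\mathsf{G}(g_n,\cdot)$ yields \eqref{eq:a}.

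The hard part will be the passage to the limit in the last step near the endpoints $a,b$: condition $iv)$ controls $\varphi^{n}|w_{2n}|$ uniformly only on compact subsets of $(a,b)$, so the behaviour of $g_n$ near the endpoints must be governed by the integral normalization \eqref{rest_var_w} alone, typically through a uniform integrability or an upper estimate that prevents logarithmic mass from concentrating at $a$ and $b$. Pinning down the unimodular constant $\Theta_n$ is a secondary but genuine point, since $R_n$ is a priori determined by its modulus only up to such a factor.
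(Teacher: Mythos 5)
Your proof is correct and reaches the paper's key intermediate identity --- that $R_n:=C^{2n}\Phi^{2n}B_{2n}/w_{2n}$ coincides with the Szeg\H{o} function of the weight $g_n=(\varphi^n|w_{2n}|)^2$ --- but by a genuinely different argument for that identity. The paper proceeds constructively: it splits $R_n=(C\Phi/\Psi)^{2n}\cdot(\Psi^{2n}B_{2n}/w_{2n})$, writes each Blaschke-type factor $f_{2n,i}$ and the function $(C\Phi/\Psi)^2$ as explicit outer functions, and multiplies the resulting integral representations (\eqref{eq:c} and \eqref{asymptotic3}). You instead characterize $R_n$ abstractly: holomorphic, zero-free, with boundary modulus $g_n^{-1/2}$, hence equal to $\mathsf{G}(g_n,\cdot)$ up to a unimodular constant, which you pin down by conjugation symmetry and positivity at $\infty$. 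Your route is shorter and avoids the per-zero computation, but it must separately justify two points the paper's formulas deliver automatically: (i) that $\ln|\Theta_n|$ is bounded up to $\Delta$ --- this does hold, because $g_n$ is positive and continuous on $\Delta$ and the slit domain is Dirichlet-regular, so $|R_n|$ and $|\mathsf{G}(g_n,\cdot)|$ both extend continuously to $\Delta$ without vanishing, and you should say this rather than just assert boundedness; and (ii) the sign at $\infty$, where $R_n(\infty)>0$ holds precisely when $w_{2n}>0$ on $\Delta$ (the sign of $B_{2n}/w_{2n}$ at $\infty$ equals the sign of $w_{2n}$ on $\Delta$, as one sees by counting the real zeros to the right of $b$) --- an implicit normalization that the paper also needs when it claims the product of its constants $c_i$ equals $1$. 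Finally, the limit passage you single out as the hard part --- controlling the logarithmic mass of $\ln(\varphi^n|w_{2n}|)$ near $a$ and $b$ from locally uniform convergence in $(a,b)$ together with \eqref{rest_var_w} alone --- is exactly the step the paper compresses into its last sentence (``To deduce \eqref{eq:a} it remains to use \eqref{rest_var_w}\dots''), so your proposal omits nothing that the paper's own proof actually supplies; your remark that a uniform-integrability or one-sided estimate at the endpoints is needed is an accurate account of what that sentence hides.
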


\begin{proof}
	Notice that
	\begin{equation}
		\label{eq:b}
		C^{2n} \Phi^{2n}(u) \frac{B_{2n}(u)}{w_{2n}(u)} = \left(\frac{C^{2n}
\Phi^{2n}(u)}{\Psi^{2n}(u)}\right)\left(\frac{\Psi^{2n}(u)B_{2n}(u)}{w_{2n}(u)}\right)
	\end{equation}
	and consider each factor in parenthesis on the right hand side separately.

	Define the function
 		\[
 		f_{2n,i}(u) := \frac{\Psi(u)}{u-x_{2n,i}}\frac{\Psi(u) - \Psi(x_{2n,i})}{1-\overline{\Psi(x_{2n,i})}\Psi(u)}.
 		\]
	 It is easy to verify that this function is holomorphic and never vanishes in $\overline{\C}\setminus\Delta$. Also, $|f_{2n,i}|$ can be extended continuously to $\Delta$ with boundary values $|f_{2n,i}(x)|=|x-x_{2n,i}|^{-1}, x \in \Delta$. Moreover,
	 	$$f_{2n,i}(u) = \frac{\Psi(u)}{u}\frac{1}{1-x_{2n,i}u^{-1}}\frac{1-\Psi(x_{2n,i})\Psi^{-1}(u)}{\Psi^{-1}(u) -
\overline{\Psi(x_{2n,i})}},$$
 	thus $f_{2n,i}(\infty) = -\Psi'(\infty)/\overline{\Psi(x_{2n,i})}$. As $|f_{2n,i}|$ is continuous and different from zero in $\overline{\C} $, it follows that $f_{2n,i}$ and $f_{2n,i}^{-1}$ are in $H_1(\overline{\C}\setminus\Delta)$ with respect to  the Chebyshev measure on $\Delta$; consequently, $f_{2n,i}$ is an outer function (see \cite[Chap. 17, Ex. 19]{rudin}).
 Then,
 	\begin{equation*}
 		f_{2n,i}(u) = c_i \exp\left[\frac{\sqrt{(u-a)(u-b)}}{\pi}\int_\Delta\frac{\ln|x-x_{2n,i}|}{x-u}\D\eta_\Delta(x)\right],
 	\end{equation*}
 	(see \eqref{Glimit}) where $c_i$ is a constant, $|c_i|= 1$. Should $w_{2n}$ be monic, an easy consequence of this representation is
	\begin{equation}
		\label{eq:c}
 		\frac{(\Psi^{2n}B_{2n})(u)}{w_{2n}(u)} = \prod_{i=1}^{2n}f_{2n,i}(u) =
 \exp\left[\frac{\sqrt{(u-a)(u-b)}}{\pi}\int_\Delta\frac{\ln |w_{2n}(x)|}{x-u}\D \eta_{\Delta}(x) \right].
 	\end{equation}
 	(The product of all the constants $c_i$ gives $1$.) If $w_{2n}$ is not monic then the same representation holds due to  the fact that for any positive constant $\kappa$
 		\[ \exp\left[\frac{\sqrt{(u-a)(u-b)}}{\pi}\int_\Delta\frac{\ln \kappa}{x-u}\D \eta_{\Delta}(x) \right] = \frac{1}{\kappa}.
 		\]

	On the other hand, $(C\Phi)^2/\Psi^2$ is analytic and different from zero in $\overline{\C}\setminus\Delta$. Moreover, $|C\Phi(x)/\Psi (x)|^2 = \exp(2\gamma - 2V_{\lambda_\varphi}(x))$, $x\in\Delta,$ and using the equilibrium condition $|C\Phi(x)/\Psi (x)|^2= \exp\left(-\ln\varphi(x)\right)= 1/\varphi(x)$, $x\in \Delta$. Consequently, $C^2\Phi^2/\Psi^2$ is an outer function and we have
 	\begin{equation}
 		\label{asymptotic3}
		\frac{C^{2n}\Phi^{2n}(u)}{\Psi^{2n}(u)} = \exp\left(\frac{\sqrt{(u-a)(u-b)}}{\pi}\int_\Delta\frac{n\ln\varphi(x)}{x-u}\D \eta_{\Delta}(x)  \right).
 	\end{equation}

	Putting together \eqref{eq:b}, \eqref{eq:c}, and \eqref{asymptotic3}, we have
		\[ C^{2n} \Phi^{2n}(u) \frac{B_{2n}(u)}{w_{2n}(u)} = \exp\left(\frac{\sqrt{(u-a)(u-b)}}{\pi}	\int_\Delta\frac{\ln
(|w_{2n}(x)|\varphi^n(x))}{x-u}\D \eta_{\Delta}(x)  \right).
		\]
	To deduce \eqref{eq:a} it remains to use \eqref{rest_var_w} and the definition of $\mathsf{G}(\psi,u)$.
\end{proof}

Now, Theorem \ref{main_th} is easy to derive. Indeed
	\[ \frac{l_n^2(u)}{C^{2n}\Phi^{2n}(u)} = \frac{l_n^2(u)B_{2n}(u)}{w_{2n}(u)} \frac{w_{2n}(u)
}{C^{2n}\Phi^{2n}(u)B_{2n}(u)}.
	\]
As $n\to \infty$, the limit of the first factor on the right is given by Lemma \ref{LB_Blaschke} and that of the second one by Lemma \ref{l1}. The proof of \eqref{strong}  has been concluded.

Next, we deduce the asymptotic behavior of the monic orthogonal polynomials $L_n$ and the leading coefficients $\tau_n$ of $l_n$. It is easy to see that
		\[ \Phi(u) = e^{-v_{\lambda_\varphi}(u)} = u + \mathcal{O}(1),\qquad u \to \infty.
		\]
Using \eqref{strong} at $u=\infty$, we obtain \eqref{constante}. Then, \eqref{asintc} follows directly from \eqref{strong} and \eqref{constante}. \hfill $\Box$

\subsection{Proof of Theorem \ref{cor:a}}

We wish to express the orthogonality relations of the polynomials $p_n$ in such a way that we can apply Theorem
\ref{main_th}.

Notice that $|\Phi_\tau(x)|^{-1} = \exp V_{\tau}(x)$. According to \cite[Theorem 10.2]{Totik} (see also \cite[Lemma
9.1]{Totik}), there exists a sequence of  polynomials $(H_{n-1})_{n\geq 0}, \deg H_{n-1} \leq n-1,$ which do not vanish on
$\Delta$ whose zeros verify condition $iii)$   (see assertion on page 94 in \cite{Totik}) such that
\begin{equation}
	\label{eq:d}
	|H_{n-1}(x)/\Phi_\tau^n(x)| \leq 1, \qquad x \in (a,b),
\end{equation}
\begin{equation}
	\label{eq:e}
	\lim_n |H_{n-1}(x)/\Phi_\tau^n(x)| =1,
\end{equation}
uniformly on compact subsets of $(a,b)$, and
\begin{equation}
	\label{eq:f}
	\lim_n \int_a^b \ln (|H_{n-1}(x)/\Phi_\tau^n(x)|)\D\eta_\Delta(x) = 0.
\end{equation}

Now, the orthogonality relations satisfied by the polynomials $p_n$ can be rewritten as
	\[ \int p_m(x) p_n(x) \frac{|H_{n-1}^2(x)|}{|\Phi_{\tau}^{2n}(x)|}\frac{\D \mu_n(x)}{|H_{n-1}^2(x)|} = \left\{
 	\begin{array}{ll}
 	0, & m < n, \\
 	1, & m = n.
 	\end{array}\right.
 	\]
Let us check that the sequence
	\[ \left( \frac{|H_{n-1}^2(x)|\D \mu_n}{|\Phi_{\tau}^{2n}(x)|}, H_{n-1}^2(x) \right)_{n \geq 0}
 	\]
verifies $i)-iv)$. Indeed, the zeros of the polynomials $H_{n-1}$, and thus of the polynomials $H_{n-1}^2,$ $\deg H_{n-1}^2 \leq 2n$, verify condition $iii)$. On the other hand, \eqref{eq:d}, \eqref{eq:e}, and condition $i)$ for the sequence of measures $(\mu_n)_{n\geq 0}$ imply condition $i)$ for the sequence of measures $(|H_{n-1}^2|\D \mu_n/|\Phi_{\tau}^{2n}|)_{n\geq 0}$ and
 	\[ \liminf_n \int \ln \left(\frac{|H_{n-1}^2(x)|}{|\Phi_{\tau}^{2n}(x)|}\mu_n'(x) \right)\D\eta_\Delta(x) \geq \int \ln\mu'(x) \D\eta_\Delta(x);\]
 therefore, $ii)$ takes place.
Take $w_{2n} = H_{n-1}^2$ and $\varphi = e^{2V_\tau}$. Using \eqref{eq:f} we obtain $iv)$ with $\psi \equiv 1$. The
equilibrium condition corresponding to this case is the trivial one
 	\[ V_\tau(x) - V_\tau(x) \equiv 0
 	\]
and the equilibrium constant is $\gamma = 0$; therefore $C= 1$. Applying Theorem \ref{main_th} the thesis of Theorem
\ref{cor:a} readily follows.  \hfill $\Box$

\subsection{Applications to rational approximation}
Let $\mu$ be a positive measure with $\supp\mu=\Delta$ that satisfies Szeg\H{o}'s condition. (In this section we take $h_n \equiv 1, n \geq 0$.) The  corresponding Markov function, also known as the Cauchy transform of the measure $\mu$, is defined as
\begin{equation}
	\label{cauchy}
	\hat{\mu}(z) := \int_\Delta\frac{\D \mu (x)}{z-x}.
\end{equation}

Consider a sequence of polynomials $(w_{2n})_{n\geq 0}$ as above, positive on $\Delta$. It is well known that for each $n \geq 1$, there exists a rational function $R_n =\frac{L^*_{n-1} }{L_n }$, $\deg L^*_{n-1}\leq n-1$ and $\deg L_n\leq n$ such that
\begin{equation*}
	\frac{(L_n  \hat{\mu}-L^*_{n-1})(z)}{w_{2n}(z)} = \frac{A_n}{z^{n+1}} + \cdots, \qquad z\rightarrow\infty
\end{equation*}
where the function on the left hand side is analytic in $\overline{\C}\setminus\Delta$. The fraction $R_n$ is called the $n$-th multi-point Padé approximant of $\hat{\mu}$ with respect to $w_{2n}$. It is well known and easy to prove that $L_n$ is an $n$-th orthogonal polynomial with respect to the varying measure $\mu/w_{2n}$ and it can be taken to be monic. The remainder of $\hat{\mu}-R_n$ has the integral expression (see \cite{gonchar_lago})
\begin{equation*}
	(\hat{\mu}-R_n)(z) = \frac{w_{2n}(z)}{L_n^2(z)}\int_\Delta \frac{L_n^2(x) \D \mu (x)}{w_{2n}(x)(z-x)} =
\frac{w_{2n}(z)}{l_n^2(z)}\int_\Delta\frac{l^2_n(x)\D \mu(x)}{w_{2n}(x)(z-x)},
\end{equation*}
where $l_n$ denotes the corresponding orthonormal polynomial.

Taking into account \cite[Theorem 8]{bernardo_lago1}, we know that
\begin{equation*}
	\lim_{n} \int_\Delta\frac{l^2_n(x) \D \mu(x)}{|w_{2n}(x)|(z-x)} = \frac{1}{\sqrt{(z-b)(z-a)}},
\end{equation*}
uniformly on compact subsets of $\overline{\C} \setminus \Delta$, where the square root is chosen to be positive when $z > b$. So, a direct consequence of Lemma \ref{LB_Blaschke} and Theorem \ref{main_th} is the next result.
\begin{corollary}
Assume that $i)-iii)$ take place where $h_n \equiv 1, n\geq 0$. We have
	\begin{equation*}
		\lim_{n }\frac{(\hat{\mu}-R_n)(z)}{B_{2n}(z)} = \frac{2\pi\mathsf{G}^{-2}(\mu,z)}{\sqrt{(z-a)(z-b)}}.
	\end{equation*}
	If, additionally, $iv)$ holds and $\supp \lambda_{\varphi} = \Delta$, then
	\begin{equation*}
		\lim_{n}	\frac{(C\Phi)^{2n}(z)(\hat{\mu}-R_n)(z)}{w_{2n}(z)} =
\frac{2\pi\mathsf{G}^{-2}(\psi\mu,z)}{\sqrt{(z-a)(z-b)}}.
	\end{equation*}
	The limits are uniform on compact subsets of $\Omega$.
\end{corollary}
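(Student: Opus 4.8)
The plan is to combine the integral representation of the remainder, displayed just above the statement, with the two asymptotic results already at our disposal: Lemma~\ref{LB_Blaschke} (and Theorem~\ref{main_th}) for the behaviour of $q_n^2$, and the limit of the integral $\int_\Delta q_n^2(x)\,\D\mu(x)/(|w_{2n}(x)|(z-x))$ taken from \cite[Theorem 8]{bernardo_lago1}. Since here $w_{2n}$ is positive on $\Delta$, we have $w_{2n}(x)=|w_{2n}(x)|$ for $x\in\Delta$, so the integral appearing in the remainder formula is exactly the one controlled by the cited result, and it tends to $1/\sqrt{(z-b)(z-a)}$ uniformly on compact subsets of $\Omega$.

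First I would establish the first limit. Writing the remainder as
\[
\frac{(\hat{\mu}-R_n)(z)}{B_{2n}(z)} = \frac{w_{2n}(z)}{q_n^2(z)\,B_{2n}(z)}\int_\Delta\frac{q_n^2(x)\,\D\mu(x)}{w_{2n}(x)(z-x)},
\]
I would identify the first factor as the reciprocal of the left-hand side of \eqref{asintb}. By Lemma~\ref{LB_Blaschke}, $q_n^2 B_{2n}/w_{2n}\to \tfrac{1}{2\pi}\mathsf{G}^2(\mu,z)$ uniformly on compact subsets of $\Omega$; because $\mathsf{G}(\mu,\cdot)$ is a nonvanishing holomorphic function on $\Omega$, its square is bounded away from zero on each compact set, so the reciprocal converges uniformly, giving $w_{2n}/(q_n^2 B_{2n})\to 2\pi\,\mathsf{G}^{-2}(\mu,z)$. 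Multiplying this uniform limit by the uniform limit of the integral yields the first assertion. The same cancellation of the zeros of $w_{2n}$ against those of $B_{2n}$ (as in the proof of Lemma~\ref{l1}, where $\Psi^{2n}B_{2n}/w_{2n}=\prod_i f_{2n,i}$ is holomorphic and nonvanishing off $\Delta$) shows that $(\hat\mu-R_n)/B_{2n}$ is holomorphic on $\Omega$, so the division is legitimate.

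Next I would prove the second limit under the additional hypotheses $iv)$ and $\supp\lambda_\varphi=\Delta$. Here I would rewrite
\[
\frac{(C\Phi)^{2n}(z)(\hat{\mu}-R_n)(z)}{w_{2n}(z)} = \frac{(C\Phi)^{2n}(z)}{q_n^2(z)}\int_\Delta\frac{q_n^2(x)\,\D\mu(x)}{w_{2n}(x)(z-x)},
\]
so that the $w_{2n}(z)$ in the remainder cancels the one in the denominator. Squaring \eqref{strong} from Theorem~\ref{main_th} gives $q_n^2/(C\Phi)^{2n}\to\tfrac{1}{2\pi}\mathsf{G}^2(\psi\mu,z)$ uniformly on compact subsets of $\Omega$; taking reciprocals (again legitimate since $\mathsf{G}(\psi\mu,\cdot)$ is nonvanishing) produces $(C\Phi)^{2n}/q_n^2\to 2\pi\,\mathsf{G}^{-2}(\psi\mu,z)$. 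Multiplying by the limit of the integral gives the second assertion.

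Each statement is thus a product of two uniform limits, so the only points requiring care are routine: ensuring each factor is holomorphic and uniformly bounded on the relevant compact set, so that products and reciprocals of uniform limits remain uniform, and matching $w_{2n}$ with $|w_{2n}|$ on $\Delta$. I do not expect any genuine obstacle; the whole content of the corollary is already contained in Lemma~\ref{LB_Blaschke}, Theorem~\ref{main_th}, and \cite[Theorem 8]{bernardo_lago1}, and assembling them is the entire task.
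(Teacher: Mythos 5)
Your proposal is correct and follows exactly the paper's route: the paper likewise combines the integral representation of $\hat{\mu}-R_n$, the limit of $\int_\Delta q_n^2(x)\,\D\mu(x)/(|w_{2n}(x)|(z-x))$ from \cite[Theorem 8]{bernardo_lago1}, Lemma \ref{LB_Blaschke} for the first limit, and Theorem \ref{main_th} for the second. The only difference is that you spell out the routine justifications (nonvanishing of the Szeg\H{o} functions, cancellation of the zeros of $w_{2n}$ against those of $B_{2n}$) which the paper leaves implicit by calling the result a direct consequence.
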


\section{Biorthogonal polynomials and multi level Hermite Pad\'e polynomials}

\subsection{Multilevel HP polynomials}
Let $\Delta_1, \Delta_2$ be non-intersecting closed intervals of the real line. Let $(\sigma_1, \sigma_2) \in \mathcal{M}(\mathbf{\Delta})$ where $\mathbf{\Delta} = (\Delta_1,\Delta_2)$. Using the differential notation, we define a third measure $s_{1,2}$  by
\[ \D s_{1,2}(x) := \widehat{\sigma}_2(x) \D \sigma_1(x),
\]
where $\widehat{\sigma}_2$ is the Markov function of $\sigma_2$ (see \eqref{cauchy}). Inverting the role of the measures we define similarly $s_{2,1}, \D s_{2,1}(x) = \widehat{\sigma}_1(x) \D \sigma_2(x)$.
The pair $\mathcal{N}(\sigma_1,\sigma_2):=(s_{1,1},s_{1,2})$, where $s_{1,1} = \sigma_1$, is called the Nikishin system generated by $(\sigma_1,\sigma_2)$.

Notice that the order in which the measures are taken is important and $\mathcal{N}(\sigma_1,\sigma_2)\not=\mathcal{N}(\sigma_2,\sigma_1)$. General Nikishin systems of $m\geq 2$ measures were first introduced in \cite{nikishin}. They verify interesting properties (see, for example, \cite{Ulises_Lago}, \cite{ulises_lago_2}) and have found numerous applications in different areas of mathematics. In particular, Nikishin systems of two measures  appear in the analysis of the two matrix model \cite{Ber,BGS} and in finding discrete solutions of the Degasperis-Procesi equation \cite{Bertola:CBOPs} through a Hermite-Pad\'e approximation problem for two discrete measures. Motivated in \cite{Bertola:CBOPs}, the approximation problem was extended  in \cite{LMS} for arbitrary $m \geq 2$ and general measures proving the convergence of the method. We will focus on the case of two measures.

For each $n \in \mathbb{N},$ there exists a vector polynomial $(a_{n,0},a_{n,1}, a_{n,2}),$ not identically equal to zero, with $\deg a_{n,0}\leq n-1$, $\deg a_{n,1}\leq n-1,$ and $\deg a_{n,2}\leq n$,  that satisfies
\begin{align}
\mathcal{A}_{n,0}(z):=\left(a_{n,0}-a_{n,1}\widehat{s}_{1,1}+a_{n,2}\widehat{s}_{1,2}\right)(z)=\mathcal{O}(1/z^{n+1}) \label{JLS1},\\
\mathcal{A}_{n,1}(z):= \left(-a_{n,1}+a_{n,2}\widehat{s}_{2,2}\right)(z)=\mathcal{O}(1/z). \label{JLS2}
\end{align}
Here and below, the symbol $\mathcal{O}(*)$ is taken as $z \to \infty$. By extension we take $\mathcal{A}_{n,2} \equiv a_{n,2}$. The polynomials $a_{n,0}, a_{n,1}, a_{n,2}$ are called multilevel Hermite-Pad\'e polynomials.

It can be shown that $\deg a_{n,2}= n$ and the vector polynomial can be normalized taking $a_{n,2}$ monic. With this normalization $(a_{n,0},a_{n,1}, a_{n,2})$ is unique. Moreover, all the zeros of $a_{n,2}$ are simple and lie in the interior $\stackrel{\circ}{\Delta}_2$ (with the Euclidean topology of $\R$) of the interval $\Delta_2$. For more details, see
\cite[Theorem 1.4]{LMS} and Lemma \ref{l2} below.

Combining  Cauchy's theorem, Fubini's theorem, and Cauchy's integral formula, from \eqref{JLS1} it follows that
\[ \int x^\nu \mathcal{A}_{n,1}(x) \D \sigma_1(x) = 0, \qquad \nu=0,\ldots,n-1,
\]
and from \eqref{JLS2} we get the integral representation
\[ \mathcal{A}_{n,1}(x) = \int \frac{a_{n,2}(y)\D \sigma_2(y)}{x -y}.
\]
Therefore,
\[ \int \int \frac{x^{\nu} a_{n,2}(y)}{x-y} \D \sigma_1(x) \D \sigma_2(y) = 0, \qquad \nu =0,\ldots,n-1.
\]
Consequently, $a_{n,2}$ normalized to be monic, verifies the same orthogonality relations as the biorthogonal polynomial
$Q_n$ (see \eqref{biort}) and coincides with it.

Analogously, for each $n \in \mathbb{N},$ there exists a vector polynomial $(b_{n,0},b_{n,1}, b_{n,2}),$ not identically equal to zero, with $\deg b_{n,0}\leq n-1$, $\deg b_{n,1}\leq n-1,$ and $\deg b_{n,2}\leq n$,  that satisfies
\begin{align}
\mathcal{B}_{n,0}(z):=
\left(b_{n,0}-b_{n,1}\widehat{s}_{2,2}+b_{n,2}\widehat{s}_{2,1}\right)(z)=\mathcal{O}(1/z^{n+1}) \label{JLS1*},\\
\mathcal{B}_{n,1}(z):= \left(-b_{n,1}+b_{n,2}\widehat{s}_{1,1}\right)(z)=\mathcal{O}(1/z). \label{JLS2*}
\end{align}
By extension we take $\mathcal{B}_{n,2} \equiv b_{n,2}$. Normalizing $b_{n,2}$ to be monic, we have $b_{n,2} = P_n$ (the other biorthogonal polynomial in \eqref{biort}).

Therefore, in order to prove Theorem \ref{mainbiort}, we need to find the strong asymptotic of the sequences of polynomials $(a_{n,2})_{n\geq 0}$ and $(b_{n,2})_{n\geq 0}$. Because of the symmetry of the problem, it suffices to analyze the first sequence and the results for the second one are immediate.

Indeed, we will give the strong asymptotic of the forms $\mathcal{A}_{n,0}, \mathcal{A}_{n,1}$ and the polynomials $a_{n,0}, a_{n,1}, a_{n,2}$, as $n\to \infty$, under the assumption that the generating measures $\sigma_1, \sigma_2$ are in the Szeg\H{o} class; that is, $(\sigma_1,\sigma_2) \in \mathcal{S}(\mathbf{\Delta})$ (see \eqref{szego}). For general Nikishin systems of $m\geq 2$ measures, the logarithmic and ratio asymptotic of ML HermitePad\'e polynomials was studied in \cite{ULS} (see also \cite{Lysov}).

\subsection{Some useful properties}

The forms $\mathcal{A}_{n,k}, k=0,1,2,$  are interlinked and satisfy interesting orthogonality relations which will be of great use. The following result, is a special case $(m=2)$ of \cite[Lemma 2.4]{ULS}. It is stated here for convenience of the reader.

\begin{lemma}
	\label{l2}
	Consider the Nikishin system $\mathcal{N}(\sigma_1, \sigma_2)$. For each fixed $n \in \mathbb{Z}_+$ and $j=1,2$, $\mathcal{A}_{n,j}$ has exactly $n$ zeros in $\mathbb{C} \setminus \Delta_{j+1}$ they are all simple and lie in $\stackrel{\circ}{\Delta}_{j}$ $({\Delta}_{3} = \varnothing)$. $\mathcal{A}_{n,0}$ has no zero in $\mathbb{C} \setminus \Delta_{1}$. Let $Q_{n,j}, j=1,2,$ denote the monic polynomial of degree $n$ whose zeros are those of  $\mathcal{A}_{n,j}$ in $\Delta_j$. For $j=0,1 $,
	\begin{equation}
		\label{int1}
		\frac{\mathcal{A}_{n,j}(z)}{Q_{n,j}(z)} = \int   \frac{\mathcal{A}_{n,j+1}(x)}{z-x}  \frac{ {\rm
		d}\sigma_{j+1}(x)}{Q_{n,j}(x)},
\end{equation}
where $Q_{n,0} \equiv 1$, and
\begin{equation}
	\label{int2}
	\int x^{\nu}  \mathcal{A}_{n,j+1}(x)  \frac{{\rm d}\sigma_{j+1}(x)}{Q_{n,j}(x)} = 0, \qquad \nu = 0,\ldots, n-1.
\end{equation}
\end{lemma}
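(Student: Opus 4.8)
The plan is to run the standard ``zeros $\Rightarrow$ orthogonality $\Rightarrow$ integral representation'' bootstrap for Nikishin systems, the engine being a zero-counting lemma for Cauchy transforms. First I would isolate that lemma: if $\rho$ is a finite real measure on an interval $\Delta$ with infinite support which changes sign at exactly $\ell$ points of $\stackrel{\circ}{\Delta}$ (i.e. there is a real polynomial $W$, $\deg W=\ell$, with $W\,\D\rho\ge 0$ and $\ell$ minimal), and if its Cauchy transform $\widehat\rho(z)=\int\D\rho(x)/(z-x)$ satisfies $\widehat\rho(z)=\mathcal{O}(z^{-k-1})$ as $z\to\infty$, then $\widehat\rho$ has at most $\ell-k$ zeros in $\C\setminus\Delta$, counting multiplicities. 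To prove it, let $q$ be the monic real polynomial vanishing exactly at the $Z$ zeros of $\widehat\rho$ off $\Delta$ (they are conjugate-symmetric because $\widehat\rho$ is real and symmetric there); then $\widehat\rho/q$ is analytic and zero-free on $\overline{\C}\setminus\Delta$ with $\widehat\rho/q=\mathcal{O}(z^{-k-1-Z})$. For every polynomial $p$ with $\deg p\le Z+k-1$ the product $p\,\widehat\rho/q$ is $\mathcal{O}(z^{-2})$, so integrating over a large circle (value $0$) and collapsing the contour onto $\Delta$ (Fubini, since $p/q$ is analytic near $\Delta$) yields $\int_\Delta (p/q)\,\D\rho=0$. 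If $Z>\ell-k$ then $\deg W=\ell\le Z+k-1$, so $p=W$ is admissible, forcing $\int(W/q)\,\D\rho=0$; but $W\,\D\rho\ge 0$ and $q$ has constant sign on the connected set $\Delta$, so the integral cannot vanish, a contradiction.

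With this in hand I would settle the zero statements. For $\mathcal{A}_{n,1}$ the displayed relation $\mathcal{A}_{n,1}(x)=\int a_{n,2}(y)\,\D\sigma_2(y)/(x-y)$ identifies it as the Cauchy transform of $a_{n,2}\,\D\sigma_2$, which changes sign at the $n$ simple zeros of $a_{n,2}$ in $\stackrel{\circ}{\Delta}_2$; the lemma with $\ell=n$, $k=0$ gives at most $n$ zeros in $\C\setminus\Delta_2$. The established orthogonality $\int x^\nu\mathcal{A}_{n,1}\,\D\sigma_1=0$, $\nu=0,\dots,n-1$, gives the matching lower bound: were there fewer than $n$ sign changes on $\supp\sigma_1$, the sign-change polynomial (degree $\le n-1$) would make $\int W\,\mathcal{A}_{n,1}\,\D\sigma_1\neq 0$. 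Hence $\mathcal{A}_{n,1}$ has exactly $n$ zeros off $\Delta_2$, all simple and in $\stackrel{\circ}{\Delta}_1$, which defines $Q_{n,1}$. The case $j=2$ is the given fact about $a_{n,2}$, and $\mathcal{A}_{n,0}$ will be treated last, once \eqref{int1} is available: the representation $\mathcal{A}_{n,0}(z)=\int\mathcal{A}_{n,1}(x)\,\D\sigma_1(x)/(z-x)$ exhibits it as the Cauchy transform of $\mathcal{A}_{n,1}\,\D\sigma_1$, which has exactly $n$ sign changes on $\Delta_1$, while $\mathcal{A}_{n,0}=\mathcal{O}(z^{-n-1})$; the lemma with $\ell=n$, $k=n$ then gives at most $0$ zeros in $\C\setminus\Delta_1$.

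It remains to produce the two identities in the correct order. Relation \eqref{int2} for $j=0$ is exactly the known orthogonality against $\sigma_1$. For $j=1$, knowing that the only zeros of $\mathcal{A}_{n,1}$ in $\C\setminus\Delta_2$ are those of $Q_{n,1}$, the quotient $\mathcal{A}_{n,1}/Q_{n,1}$ is analytic in $\C\setminus\Delta_2$ and $\mathcal{O}(z^{-n-1})$ at infinity; integrating $z^\nu\mathcal{A}_{n,1}/Q_{n,1}$ ($\nu\le n-1$) over a large circle gives $0$, and collapsing onto $\Delta_2$ with Fubini produces $\int y^\nu a_{n,2}(y)\,\D\sigma_2(y)/Q_{n,1}(y)=0$, which is \eqref{int2}. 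For \eqref{int1} with $j=1$ I would insert the partial-fraction identity $\frac{1}{Q_{n,1}(z)(z-x)}=\frac{1}{Q_{n,1}(x)(z-x)}-\frac{1}{Q_{n,1}(z)}\frac{R(z,x)}{Q_{n,1}(x)}$, where $R(z,x)=(Q_{n,1}(z)-Q_{n,1}(x))/(z-x)$ is a polynomial in $z$ of degree $n-1$, into $\mathcal{A}_{n,1}(z)/Q_{n,1}(z)$; the leftover term is a polynomial of degree $\le n-1$ divided by $Q_{n,1}$ whose coefficients are exactly the moments killed by \eqref{int2}, hence it vanishes, leaving \eqref{int1}. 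For $j=0$ the identity follows, as in the text's derivation of the formula for $\mathcal{A}_{n,1}$, by writing out $\int\mathcal{A}_{n,1}(x)\,\D\sigma_1(x)/(z-x)$, recognizing $-a_{n,1}\widehat{s}_{1,1}+a_{n,2}\widehat{s}_{1,2}$ plus a polynomial, and observing that the difference with $\mathcal{A}_{n,0}$ is entire and tends to $0$ at infinity (because $\mathcal{A}_{n,0}=\mathcal{O}(z^{-n-1})$), hence is identically zero.

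The main obstacle is the zero-counting lemma together with the logical ordering: the zero locations, the orthogonality relations, and the integral representations are mutually dependent, so the argument must be staged so that each step uses only what has already been proved. In particular the ``no zeros of $\mathcal{A}_{n,0}$'' claim genuinely needs both the sharp sign-change count for $\mathcal{A}_{n,1}$ on $\Delta_1$ and the high-order vanishing $\mathcal{A}_{n,0}=\mathcal{O}(z^{-n-1})$, and so must come after \eqref{int1}. The analytic subtleties (sign changes and constant-sign integrals for measures that need not be absolutely continuous, and the Fubini/contour-collapse steps) are routine but must be handled without assuming the $\sigma_k$ absolutely continuous.
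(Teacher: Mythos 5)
The paper does not actually prove Lemma \ref{l2}: it is quoted verbatim as the $m=2$ case of \cite[Lemma 2.4]{ULS}, so there is no internal proof to compare against. What you have written is a reconstruction of the standard argument from the Nikishin-system literature (the zero-counting lemma for Cauchy transforms of measures with finitely many sign changes, orthogonality lower bounds on sign changes, and contour-collapse/partial-fraction identities), and in its essentials it is correct: the counting lemma is sound as sketched, the deduction of exactly $n$ simple zeros of $\mathcal{A}_{n,1}$ in $\stackrel{\circ}{\Delta}_1$ from the upper bound (Cauchy transform of $a_{n,2}\,\D\sigma_2$) and the lower bound (orthogonality to $\sigma_1$) is the right squeeze, the derivation of \eqref{int2} for $j=1$ from the decay $\mathcal{A}_{n,1}/Q_{n,1}=\mathcal{O}(z^{-n-1})$ is correct, the partial-fraction step giving \eqref{int1} for $j=1$ is correct, and you are right that the ``no zeros of $\mathcal{A}_{n,0}$'' claim must come last, after \eqref{int1} for $j=0$, using $\ell=n$, $k=n$.

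There is one genuine (though fixable) gap: you declare ``the case $j=2$ is the given fact about $a_{n,2}$,'' but that case --- $\deg a_{n,2}=n$ with $n$ simple zeros in $\stackrel{\circ}{\Delta}_2$ --- is part of the statement of Lemma \ref{l2}, not a hypothesis; the paper itself only states it beforehand by pointing to \cite[Theorem 1.4]{LMS} \emph{and to this very lemma}, so a blind proof cannot assume it. The fix lives entirely inside your own machinery: first observe $a_{n,2}\not\equiv 0$ (otherwise \eqref{JLS2} forces $a_{n,1}\equiv 0$ and then \eqref{JLS1} forces $a_{n,0}\equiv 0$, contradicting nontriviality); then run your counting lemma with $\ell=$ (number of sign changes of $a_{n,2}\,\D\sigma_2$ on $\stackrel{\circ}{\Delta}_2$) $\leq\deg a_{n,2}\leq n$ and $k=0$, so $\mathcal{A}_{n,1}$ has at most $\ell$ zeros off $\Delta_2$, while the orthogonality to $\sigma_1$ gives at least $n$ sign changes of $\mathcal{A}_{n,1}$ in $\stackrel{\circ}{\Delta}_1$. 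The resulting chain $n\leq\ell\leq\deg a_{n,2}\leq n$ forces equality throughout, which \emph{simultaneously} proves the $j=2$ case (all $n$ zeros of $a_{n,2}$ real, simple, at interior sign-change points of $\Delta_2$) and the $j=1$ case; so the two cases should be obtained together by this squeeze rather than sequentially. A second, minor point: in the counting lemma you take $q$ to vanish ``exactly at the $Z$ zeros'' of $\widehat\rho$ off $\Delta$, which presumes there are finitely many; since zeros could a priori accumulate toward $\Delta$, the contradiction argument should be run on a finite, conjugate-symmetric subset of $\ell-k+1$ of them, which costs nothing.
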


The orthogonality relations involving the linear forms $\mathcal{A}_{n,j}$ stated in \eqref{int2} can be rewritten in terms of orthogonal polynomials with varying measures. That is
\begin{equation} \label{int3}
0 = \int x^{\nu}  Q_{n,2}(x)  \frac{{\rm d}\sigma_{2}(x)}{Q_{n,1}(x)} , \qquad \nu = 0,\ldots, n-1.
\end{equation}
and
\begin{equation} \label{int4}
0 = \int x^{\nu}  Q_{n,1}(x) \mathcal{H}_{n,1}(x)   \frac{{\rm d}\sigma_{1}(x)}{Q_{n,2}(x)}, \qquad \nu = 0,\ldots, n-1.
\end{equation}
where, using \eqref{int1} with $j=0$ and \eqref{int3}
\begin{equation} \label{hn1} \mathcal{H}_{n,1}(z) := \frac{Q_{n,2}(z) \mathcal{A}_{n,1}(z)}{Q_{n,1}(z)} = Q_{n,2}(z) \int
\frac{Q_{n,2}(x)}{z-x}  \frac{ {\rm d}\sigma_{2}(x)}{Q_{n,1}(x)} = \int   \frac{Q_{n,2}^2(x)}{z-x}  \frac{ {\rm
d}\sigma_{2}(x)}{Q_{n,1}(x)}.
\end{equation}

\begin{proposition} \label{prop1} There is a unique pair of monic polynomials with real coefficients $(Q_{n,1},Q_{n,2})$
each one of degree $n$, whose zeros lie in $\C \setminus \Delta_2$ and $\C \setminus \Delta_1$, respectively, satisfying
\eqref{int3}-\eqref{int4} with
\[\mathcal{H}_{n,1}(z)  = \int   \frac{Q_{n,2}^2(x)}{z-x}  \frac{ {\rm d}\sigma_{2}(x)}{Q_{n,1}(x)}.
\]
\begin{proof} The existence of such polynomials is guaranteed by Lemma \ref{l2}. We must show that if $(Q_{n,1},
Q_{n,2})$ is a pair of monic polynomials of degree $n$ which satisfy \eqref{int3}-\eqref{int4} with $\mathcal{H}_{n,1}$ as
indicated then we can construct forms $\mathcal{A}_{n,0},\mathcal{A}_{n,1},\mathcal{A}_{n,2}$ verifying
\eqref{JLS1}-\eqref{JLS2} whose zeros are those of the polynomials $Q_{n,1},Q_{n,2}$.

So, let $(Q_{n,1},Q_{n,2})$ be an arbitrary pair of monic polynomials of degree $n$ which satisfy \eqref{int3}-\eqref{int4}.
Take $\mathcal{A}_{n,2}= a_{n,2} := Q_{n,2}$ and
\[ a_{n,1}(z) :=  \int \frac{Q_{n,2}(z) - Q_{n,2}(x)}{z-x}  \D \sigma_2(x).
\]
Obviously, $a_{n,1}$ is a polynomial of degree $\leq n-1$. Rearranging this equality
and using \eqref{int3}, we get
\[ \mathcal{A}_{n,1}(z) := (-a_{n,1} + a_{n,2}  \widehat{s}_{2,2})(z) = \int \frac{(Q_{n.1} Q_{n,2})(x)}{z-x} \frac{\D
\sigma_2(x)}{Q_{n,1}(x)} = Q_{n,1}(z) \int \frac{ Q_{n,2}(x)}{z-x} \frac{\D \sigma_2(x)}{Q_{n,1}(x)}.
\]
The first equality tells us that $\mathcal{A}_{n,1}(z) = \mathcal{O}(1/z)$, so that \eqref{JLS2} takes place, and the last
equality implies that the zeros of $\mathcal{A}_{n,1}$ in $\C \setminus \Delta_2$ coincide with the simple roots that
$Q_{n,1}$ has in the interior of $\Delta_1$. Moreover, these relations together with \eqref{int3}-\eqref{int4} imply that for
each $\nu=0,1,\ldots,n-1$
\[ \int x^{\nu} \mathcal{A}_{n,1}(x) \D \sigma_1(x) = \int x^{\nu} Q_{n,1}(x) \int \frac{ Q_{n,2}(t)}{x-t} \frac{\D
\sigma_2(t)}{Q_{n,1}(t)} \D \sigma_1(x) =
\]
\[ \int x^{\nu} Q_{n,1}(x) \int \frac{ Q_{n,2}^2(t)}{x-t} \frac{\D \sigma_2(t)}{Q_{n,1}(t)} \frac{\D \sigma_1(x)}{Q_{n,2}(x)} =
\int x^{\nu} Q_{n,1}(x)   \mathcal{H}_{n,1}(x) \frac{\D \sigma_1(x)}{Q_{n,2}(x)} =0.
\]
These orthogonality relations verified by $\mathcal{A}_{n,1}$ in turn imply that
\begin{equation} \label{orderAn0}
\int \frac{\mathcal{A}_{n,1}(x)}{z-x} \D \sigma_1(x) = \frac{1}{z^n} \int \frac{x^n \mathcal{A}_{n,1}(x)}{z-x}\D \sigma_1(x) =
\mathcal{O}(1/z^{n+1})
\end{equation}

Using the definition of $\mathcal{A}_{n,1}(x)$, we get
\[ a_{n,0}(z) := a_{n,1}(z) \widehat{s}_{1,1}(z) - a_{n,2}(z) \widehat{s}_{1,2}(z) +
\int \frac{  \mathcal{A}_{n.1}(x)}{z-x} \D \sigma_1(x) =
\]
\[ \int \frac{a_{n,1}(z) - a_{n,1}(x)}{z-x} \D \sigma_1(x) - \int \frac{a_{n,2}(z) - a_{n,2}(x)}{z-x} \D s_{1,2}(x),
\]
which is obviously a polynomial of degree $\leq n-1$. Rearranging this equality and taking account of \eqref{orderAn0}, it
follows that
\[ \mathcal{A}_{n,0}(z) :=  a_{n,0}(z) - a_{n,1}(z) \widehat{s}_{1,1}(z) + a_{n,2}(z) \widehat{s}_{1,2}(z) =
\int \frac{  \mathcal{A}_{n.1}(x)}{z-x} \D \sigma_1(x) = \mathcal{O}(1/z^{n+1}).
\]
Thus, $\mathcal{A}_{n,0}$ verifies \eqref{JLS1}.

From our findings, we deduce that the vector polynomial $(a_{n,0}, a_{n,1}, a_{n,2})$ defined previously is the unique
solution of \eqref{JLS1}-\eqref{JLS2}. In particular,
$a_{n,2} = Q_{n,2}$ is uniquely determined and by \eqref{int4} so is $Q_{n,1}$ since the measure $(\mathcal{H}_{n,1}
\D \sigma_1)/ Q_{n,2}$ has constant sign on $\Delta_1$. We are done.
\end{proof}

\end{proposition}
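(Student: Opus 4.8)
The plan is to split the statement into existence and uniqueness, with existence being essentially free and the genuine content lying in uniqueness. For existence I would simply appeal to Lemma \ref{l2}: the construction \eqref{JLS1}-\eqref{JLS2} produces forms $\mathcal{A}_{n,1}$ and $\mathcal{A}_{n,2}=a_{n,2}$, and the monic polynomials $Q_{n,1},Q_{n,2}$ extracted from their zeros have degree $n$, the stated zero localization (zeros of $Q_{n,1}$ in $\stackrel{\circ}{\Delta}_1\subset\C\setminus\Delta_2$, zeros of $Q_{n,2}$ in $\stackrel{\circ}{\Delta}_2\subset\C\setminus\Delta_1$), and satisfy \eqref{int3}-\eqref{int4} with $\mathcal{H}_{n,1}$ in the form \eqref{hn1}, exactly as derived in the passage leading to \eqref{hn1}. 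Hence a pair with the required properties exists.

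For uniqueness my plan is to run the reduction of subsection 3.1 \emph{in reverse}: starting from an arbitrary pair $(Q_{n,1},Q_{n,2})$ of monic degree-$n$ polynomials with the prescribed zero localization satisfying \eqref{int3}-\eqref{int4}, I would reconstruct a Hermite--Pad\'e vector $(a_{n,0},a_{n,1},a_{n,2})$ solving \eqref{JLS1}-\eqref{JLS2} with $a_{n,2}=Q_{n,2}$ monic. Concretely, set $a_{n,2}:=Q_{n,2}$ and $a_{n,1}(z):=\int\frac{Q_{n,2}(z)-Q_{n,2}(x)}{z-x}\,\mathrm{d}\sigma_2(x)$, a polynomial of degree $\leq n-1$; using \eqref{int3} one rewrites $\mathcal{A}_{n,1}=-a_{n,1}+a_{n,2}\widehat{s}_{2,2}$ as $Q_{n,1}(z)\int\frac{Q_{n,2}(x)}{z-x}\frac{\mathrm{d}\sigma_2(x)}{Q_{n,1}(x)}$, which is $\mathcal{O}(1/z)$, giving \eqref{JLS2}. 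Feeding \eqref{int4} into this representation should convert it into the full orthogonality $\int x^\nu\mathcal{A}_{n,1}(x)\,\mathrm{d}\sigma_1(x)=0$ for $\nu\leq n-1$, whence the natural choice of $a_{n,0}$ forces $\mathcal{A}_{n,0}=\mathcal{O}(1/z^{n+1})$ and \eqref{JLS1} holds. Since \eqref{JLS1}-\eqref{JLS2} has a unique solution once $a_{n,2}$ is taken monic, $Q_{n,2}=a_{n,2}$ is forced, independently of the initial pair.

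It remains to pin down $Q_{n,1}$ once $Q_{n,2}$ is fixed. My plan here is a constant-sign argument. Because $\Delta_1$ and $\Delta_2$ are disjoint intervals, $1/(t-x)$ has constant sign for $t\in\Delta_1,\ x\in\Delta_2$; since $Q_{n,1}$ has no zeros on $\Delta_2$ and $Q_{n,2}$ has no zeros on $\Delta_1$, each keeps a constant sign on the respective interval, and with $Q_{n,2}^2\geq 0$ the integrand defining $\mathcal{H}_{n,1}$ is of one sign, so $\mathcal{H}_{n,1}$ has constant sign on $\Delta_1$. Consequently $(\mathcal{H}_{n,1}/Q_{n,2})\,\mathrm{d}\sigma_1$ is a definite (signed) measure on $\Delta_1$, and the classical argument for orthogonal polynomials with respect to a definite measure shows, via \eqref{int4}, that $Q_{n,1}$ has $n$ simple zeros all lying in $\stackrel{\circ}{\Delta}_1$. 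Matching these with the exactly $n$ simple zeros that $\mathcal{A}_{n,1}$ (now determined by $Q_{n,2}$ alone) has in $\stackrel{\circ}{\Delta}_1$ by Lemma \ref{l2} identifies them, so $Q_{n,1}$ is the unique monic polynomial carrying them.

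The step I expect to be the main obstacle is exactly this last one, and the reason is that $\mathcal{H}_{n,1}$ itself depends on $Q_{n,1}$, so \eqref{int4} is not literally orthogonality against a \emph{fixed} measure and the usual uniqueness of monic orthogonal polynomials cannot be quoted verbatim. The resolution I would emphasize is that the constant-sign property holds \emph{for the given} $Q_{n,1}$ regardless of its precise zeros, so the definiteness argument applies to that polynomial and already rules out zeros in $\C\setminus(\Delta_1\cup\Delta_2)$; moreover the identity $Q_{n,1}\mathcal{H}_{n,1}=Q_{n,2}\mathcal{A}_{n,1}$ from \eqref{hn1} shows that the product is independent of the choice of $Q_{n,1}$, which is what ultimately locks the zeros of $Q_{n,1}$ to those of the already-determined $\mathcal{A}_{n,1}$. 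Care must also be taken to justify the reordering of iterated integrals in the reconstruction and the degree/order bookkeeping at infinity in \eqref{JLS1}, but these are routine once the sign structure is in place.
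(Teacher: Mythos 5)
Your proposal is correct and follows essentially the same route as the paper: existence from Lemma \ref{l2}, and uniqueness by reconstructing the forms $\mathcal{A}_{n,2}=Q_{n,2}$, $a_{n,1}(z)=\int\frac{Q_{n,2}(z)-Q_{n,2}(x)}{z-x}\,\D\sigma_2(x)$, and $a_{n,0}$ from an arbitrary admissible pair, verifying \eqref{JLS1}--\eqref{JLS2}, and invoking uniqueness of the normalized Hermite--Pad\'e solution together with the constant-sign structure of $(\mathcal{H}_{n,1}\,\D\sigma_1)/Q_{n,2}$ to lock the zeros of $Q_{n,1}$ to those of $\mathcal{A}_{n,1}$. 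Your more detailed treatment of the last step (the sign-change argument placing $n$ simple zeros of $Q_{n,1}$ in $\stackrel{\circ}{\Delta}_1$ and then matching them with the zeros of the uniquely determined $\mathcal{A}_{n,1}$) is exactly the elaboration of what the paper states tersely in its final sentence.
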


\subsection{Normalization}
	Set
	\begin{equation}
		\label{kappa} \kappa_{n,2}^{-2} := \int    Q_{n,2}^2(x)  \frac{{\rm d}\sigma_{2}(x)}{|Q_{n,1}(x)|}, \qquad
(\kappa_{n,1}\kappa_{n,2})^{-2} :=  \int  Q_{n,1}^2(x)    \frac{|\mathcal{H}_{n,1}(x)|{\rm d}\sigma_{1}(x)}{|Q_{n,2}(x)|}.
	\end{equation}
	Take
	\begin{equation}
		\label{Qs}
		q_{n,1} := \kappa_{n,1}Q_{n,1}, \qquad q_{n,2} := \kappa_{n,2}Q_{n,2}, \qquad h_{n,1} :=\kappa_{n,2}^2
\mathcal{H}_{n,1}.
	\end{equation}
	Notice that
	\[
	\kappa_{n,1}^{-2} :=  \int  Q_{n,1}^2(x)    \frac{|h_{n,1}(x)|{\rm d}\sigma_{1}(x)}{|Q_{n,2}(x)|}.
	\]
	We can rewrite \eqref{int3}-\eqref{int4} as
	\begin{equation}
		\label{int5}
		0 = \int x^{\nu}  q_{n,2}(x)  \frac{{\rm d}\sigma_{2}(x)}{|Q_{n,1}(x)|} = 0, \qquad \nu = 0,\ldots, n-1,
	\end{equation}
	and
	\begin{equation}
		\label{int6}
		0 = \int x^{\nu}  q_{n,1}(x)    \frac{|{h}_{n,1}(x)|{\rm d}\sigma_{1}(x)}{|Q_{n,2}(x|)}, \qquad \nu = 0,\ldots, n-1.
	\end{equation}
	We also have
	\begin{equation}
		\label{int7}
 		\int   q_{n,2}^2(x)  \frac{{\rm d}\sigma_{2}(x)}{|Q_{n,1}(x)|} = 1,
	\end{equation}
	and
	\begin{equation}
		\label{int8}
 		\int  q_{n,1}^2(x)    \frac{|{h}_{n,1}(x)|{\rm d}\sigma_{1}(x)}{|Q_{n,2}(x)|} = 1.
	\end{equation}
	Consequently, $q_{n,1}$ and $q_{n,2}$ are the $n$-th orthonormal polynomials with respect to the varying measures
$\frac{|{h}_{n,1} |{\rm d}\sigma_{1} }{|Q_{n,2} |}$ and $\frac{{\rm d}\sigma_{2} }{|Q_{n,1} |}$, respectively. Recall that the
zeros of $Q_{n,j}$ lie in $\stackrel{\circ}{\Delta}_j = (a_j,b_j), j=1,2$.

From \cite[Theorem 8]{bernardo_lago1} it follows that if $\sigma_2' > 0$ a.e. on $\Delta_2, $ then  for any bounded measurable function $g_2$ on $\Delta_2,$
 \begin{equation} \label{int9}
\lim_n \int    \frac{g_2(x) q_{n,2}^2(x){\rm d}\sigma_{2}(x)}{|Q_{n,1}(x)|} =\frac{1}{\pi}\int_{a_2}^{b_2}  g_2(x)\D\eta_{\Delta_2}(x).
\end{equation}
Taking into account \eqref{hn1} and using \eqref{int9} with $g_2(x) = |t-x|^{-1}, t \in \Delta_1,$ and \eqref{hn1}, it follows that
 \begin{align} \label{int12}
\lim_n |h_{n,1}(t)| =& \lim_n \int    \frac{q_{n,2}^2(x){\rm d}\sigma_{2}(x)}{|t-x||Q_{n,1}(x)|} =\\
                    =&\frac{1}{\pi}\int_{a_2}^{b_2}  \frac{\D\eta_{\Delta_2}(x)}{|t-x|}= \frac{1}{\sqrt{|t-a_2||t-b_2|}} =: h(t),\nonumber
\end{align}
uniformly for $t \in \Delta_1$. Then \eqref{int8}, \eqref{int12}, and \cite[Theorem 8]{bernardo_lago1} imply that if $\sigma_1' > 0$ a.e. on $\Delta_1$, then for any bounded Borel measurable function $g_1$ on $\Delta_1$ we have
\begin{equation} \label{int10}
\lim_n \int    \frac{g_1(x) q_{n,1}^2(x)|{h}_{n,1}(x)|{\rm d}\sigma_{1}(x)}{|Q_{n,2}(x)|} =\frac{1}{\pi}\int_{a_1}^{b_1}
g_1(x) \D\eta_{\Delta_1}(x).
\end{equation}

\subsection{The comparison functions}

The logarithmic asymptotic of general ML Hermite-Pad\'e polynomials was studied in \cite[Section 3]{ULS}. In particular, it
was proved that this asymptotic behavior can be described in terms of the solution of a
 vector equilibrium problem which, in the case we are dealing with, reduces to finding a pair of probability measures
 $(\lambda_1,\lambda_2), \supp \lambda_1 \subset \Delta_1, \supp \lambda_2 \subset \Delta_2,$ and a pair of constants
 $(\gamma_1,\gamma_2)$ such that
\begin{equation}
\label{vector_equil}
\begin{cases}
V_{\lambda_1}(x)-\frac{1}{2}V_{\lambda_2}(x)\equiv \gamma_1, &\qquad x\in\Delta_1,\\
V_{\lambda_2}(x)-\frac{1}{2}V_{\lambda_1}(x)\equiv \gamma_2, &\qquad x\in\Delta_2.
\end{cases}
\end{equation}
It is well known that this problem has a unique solution.  From \cite[Theorem 3.4]{ULS} it follows that if  $\sigma_1$ and
$\sigma_2$ are regular measures (for the definition and properties of regular measures, see \cite[Chap. 3]{ST})  then for
$k=1,2$ we have
\begin{equation} \label{weak}
\lim_n |Q_{n,k}|^{1/n} = \exp(- V_{\lambda_k}),\qquad \lim_n \kappa_{n,k}^{1/n} = \gamma_k,
\end{equation}
where the first limit is uniform on compact subsets of $\C \setminus \Delta_k$.

Since strong asymptotic implies weak asymptotic, \eqref{weak} reveals that the functions with which one must compare
the polynomials $q_{n,1}, q_{n,2}$ in order to have strong asymptotic (should it exist) are tightly connected with the
potentials $V_{\lambda_1}, V_{\lambda_2}$ and the constants $\gamma_1, \gamma_2$. With this in mind (see
\eqref{PhiC}), we define
\begin{equation}\label{compfunc}
\Phi_k(z) := e^{-(V_{\lambda_k} + i \widetilde{V}_{\lambda_k})(z)}, \qquad C_k := e^{\gamma_k}, \qquad k=1,2,
\end{equation}
where $\widetilde{V}_{\lambda_k}$ denotes the harmonic conjugate of $ {V}_{\lambda_k}$ in $\C \setminus \Delta_k$.
For a different expression of the comparison functions see \eqref{otro}.

In \eqref{int3}-\eqref{int4} we see that the orthogonality relations verified by the polynomials $Q_{n,1}, Q_{n,2}$ are interconnected. This prevents the direct use of Theorem \ref{main_th} to obtain their asymptotic because to give the asymptotic of one of the sequences one must know that of the second, and viceversa. So, as indicated in the introduction, we will follow an indirect approach devised by A.I. Aptekarev to attack  analogous problems in \cite{sasha2} and \cite{sasha}.

\subsection{Prescribed asymptotic behavior}
	An important ingredient of the method consists in being capable of producing a sequence of functions of the form $P_{n,k}/\Phi_k^n, k=1,2$, where $P_{n,k}$ is a polynomial of degree $n$, whose limit is a predetermined Szeg\H{o} function.

	Let $(\lambda_1,\lambda_2)$ be the solution of the vector equilibrium problem \eqref{vector_equil}. From \cite[Theorem 1.34]{DKL}  it follows that $\D \lambda_k = v_k \D x$ on $\Delta_k, k=1,2$, and the weights $v_1, v_2$ verify the assumptions relative to $v$ in Theorem \ref{cor:a} on the intervals $\Delta_1, \Delta_2$, respectively. In the sequel
 	\[ \Omega_k := \overline{\C} \setminus \Delta_k, \qquad k=1,2.
 	\]

 \begin{proposition} \label{prop2}
 Assume that $(\mu_1,\mu_2) \in \mathcal{S}(\mathbf{\Delta})$
 and for each $n \geq0$, $(\tilde{q}_{n,1},\tilde{q}_{n,2})$  is the pair of polynomials of degree $n$ such that
 \begin{equation}\label{poltilde}  \int \tilde{q}_{n,k}(x) \tilde{q}_{m,k}(x) \frac{\D \mu_k(x)}{|\Phi_{k}^{2n}(x)|} =
 \left\{
 \begin{array}{cc}
 0, & 0\leq m <n, \\
 1, & m=n,
 \end{array}
 \right.  \qquad k=1,2.
 \end{equation}
 Then
 \begin{equation}\label{predet}
 \lim_{n\to \infty} \frac{\tilde{q}_{n,k}(z)}{\Phi_k^{n}(z)} = \frac{\mathsf{G}(\mu_k,z)}{\sqrt{2\pi}},
 \end{equation}
 and
 \begin{equation}\label{predet*}
 \lim_{n\to \infty} \frac{\tilde{Q}_{n,k}(z)}{\Phi_k^{n}(z)} = \frac{\mathsf{G}(\mu_k,z)}{\mathsf{G}(\mu_k,\infty) },
 \end{equation}
 uniformly on each compact subset of $\Omega_k, k=1,2$, where the $\Phi_k$ were introduced in \eqref{compfunc} and
 $\tilde{Q}_{n,k}, k=1,2$ is $\tilde{q}_{n,k}$ renormalized to be monic.
 \end{proposition}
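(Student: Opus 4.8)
The plan is to recognize Proposition~\ref{prop2} as a direct instance of Theorem~\ref{cor:a}, applied separately for $k=1$ and $k=2$, and then to pass from the orthonormal to the monic normalization by an evaluation at infinity. Fixing $k\in\{1,2\}$, I would apply Theorem~\ref{cor:a} with the \emph{constant} sequence $\mu_n\equiv\mu_k$ and with $\tau=\lambda_k$, the $k$-th component of the solution of the vector equilibrium problem \eqref{vector_equil}. With this choice one has $\Phi_\tau=\exp(-V_{\lambda_k}-i\widetilde V_{\lambda_k})=\Phi_k$, so that the orthonormality relations \eqref{poltilde} defining $\tilde q_{n,k}$ coincide exactly with those defining the polynomials $p_n$ in Theorem~\ref{cor:a}. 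Consequently \eqref{strong:a} delivers \eqref{predet} verbatim (with $\mu=\mu_k$, since the weak-star limit of the constant sequence is $\mu_k$).

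The first thing to verify is that the hypotheses of Theorem~\ref{cor:a} hold. For the constant sequence $\mu_n\equiv\mu_k$, condition $i)$ is immediate: its weak-star limit is $\mu_k$, the membership $(\mu_1,\mu_2)\in\mathcal S(\mathbf{\Delta})$ forces $\mu_k'>0$ a.e.\ on $\Delta_k$, and $\int|\mu_n'-\mu_k'|\,\D x=0$ trivially; condition $ii)$ reduces to the Szeg\H{o} condition for $\mu_k$ together with a trivial equality in the $\liminf$ inequality. For $\tau=\lambda_k$, the requirements on $\tau$ are precisely the properties recorded just before the statement, namely that $\D\lambda_k=v_k\,\D x$ is a probability measure with $\supp\lambda_k=\Delta_k$, that $v_k$ is continuous, and that $v_k$ satisfies the two-sided power bounds \eqref{cond:a}; these were extracted from \cite[Theorem~1.34]{DKL}. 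Granting these, \eqref{predet} is established.

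To obtain the monic version \eqref{predet*}, I would write $\tilde q_{n,k}=\rho_{n,k}\,\tilde Q_{n,k}$ with $\rho_{n,k}>0$ the leading coefficient of $\tilde q_{n,k}$. Since $\Phi_k(z)=z+\mathcal O(1)$ as $z\to\infty$, for each fixed $n$ the ratio $\tilde Q_{n,k}(z)/\Phi_k^n(z)$ tends to $1$, so that $\rho_{n,k}=\lim_{z\to\infty}\tilde q_{n,k}(z)/\Phi_k^n(z)$. Because the convergence in \eqref{predet} is uniform on a neighborhood of $\infty$ in $\Omega_k$, I can exchange the two limits and conclude $\lim_n\rho_{n,k}=\mathsf G(\mu_k,\infty)/\sqrt{2\pi}$. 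Dividing \eqref{predet} by $\rho_{n,k}$ then yields $\tilde Q_{n,k}(z)/\Phi_k^n(z)\to \mathsf G(\mu_k,z)/\mathsf G(\mu_k,\infty)$ uniformly on compact subsets of $\Omega_k$, which is \eqref{predet*}; since $k$ was arbitrary, both cases follow.

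I do not expect a genuine obstacle here, as the analytic content is entirely contained in Theorem~\ref{cor:a}; the task is the bookkeeping of checking its hypotheses. The one point demanding care is the continuity of $v_k$ and the power bounds \eqref{cond:a} for the equilibrium density, which is exactly where \cite[Theorem~1.34]{DKL} enters and which has already been arranged in the discussion preceding the Proposition. The passage to the monic normalization is routine once the uniform convergence up to $\infty$ is in hand.
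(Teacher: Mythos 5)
Your proposal is correct and follows essentially the same route as the paper: the paper also derives \eqref{predet} by invoking Theorem \ref{cor:a} with $\tau=\lambda_k$ (so that $\Phi_\tau=\Phi_k$), citing \cite[Theorem 1.34]{DKL} for the fact that the equilibrium densities $v_k$ satisfy \eqref{cond:a} (with $\beta=\beta_0=-1/2$), and then obtains \eqref{predet*} exactly as you do, by evaluating \eqref{predet} at $\infty$ to get the limit of the leading coefficients and dividing. Your write-up merely spells out the routine verifications (condition $i)$--$ii)$ for the constant sequence $\mu_n\equiv\mu_k$, and the legitimacy of the evaluation at $\infty$) that the paper compresses into \enquote{the remaining assumptions are easily verified}.
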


 \begin{proof} As was mentioned above, \cite[Theorem 1.34]{DKL} guarantees that the components of the equilibrium
 measures $(\lambda_1,\lambda_2)$ are absolutely continuous with respect to the Lebesgue measure on the
 corresponding intervals and their weights $v_1,v_2$ verify \eqref{cond:a} with parameters $\beta=\beta_0= -1/2$ on the
 intervals $\Delta_1,\Delta_2$, respectively. The   assumptions of Theorem \ref{cor:a} are   verified and
 \eqref{predet} follows directly from \eqref{strong:a}. If $\tilde{\kappa}_{n,k}$ is the leading coefficient
 of $\tilde{q}_{n,k}$, applying \eqref{predet} at $z = \infty$ we get
 \[ \lim_n \tilde{\kappa}_{n,k} = \frac{\mathsf{G}(\mu_k,\infty)}{\sqrt{2\pi}}
 \]
 and \eqref{predet*}  follows at once.
 \end{proof}

\subsection{The operator $\tilde{T}_n$}

\begin{definition}
\label{oper_Tn}	
Let $(\sigma_1,\sigma_2) \in \mathcal{M}(\mathbf{\Delta})$. Let $(\tilde{h}_n)_{n\geq 0}$ be a sequence of positive
continuous functions on $\Delta_1$ such that
\begin{equation}
	\label{hn}
	\lim_n \tilde{h}_n = \tilde{h} >0,
\end{equation}
uniformly on $\Delta_1$. Let $\mathcal{P}_{n,k}, k=1,2,$ be the set of all monic polynomials with real coefficients of
degree $n$ whose zeros  lie in $\C \setminus \Delta_2$ when $k= 1$  and in $\C \setminus \Delta_1$ when $k=2$.
Define an operator
\[\tilde{T}_n : \mathcal{P}_{n,1}\times \mathcal{P}_{n,2}\longrightarrow \mathcal{P}_{n,1}\times \mathcal{P}_{n,2}\]
where, for every $(\hat{Q}_{n,1},\hat{Q}_{n,2}) \in \mathcal{P}_{n,1}\times \mathcal{P}_{n,2}$
\begin{equation} \label{Tn} \tilde{T}_n(\hat{Q}_{n,1},\hat{Q}_{n,2}) :=   (  {Q}^*_{n,1}, {Q}^*_{n,2}) ,
\end{equation}
being $(  {Q}^*_{n,1}, {Q}^*_{n,2})$ the unique pair of monic polynomials of degree $n$ which satisfies
\begin{equation*}
0 = \int x^{\nu}  Q^*_{n,2}(x)  \frac{{\rm d}\sigma_{2}(x)}{|\hat{Q}_{n,1}(x)|} , \qquad \nu = 0,\ldots, n-1,
\end{equation*}
and
\begin{equation*}
0 = \int x^{\nu}  Q^*_{n,1}(x) \tilde{h}_{n}(x)   \frac{{\rm d}\sigma_{1}(x)}{|\hat{Q}_{n,2}(x)|}, \qquad \nu = 0,\ldots, n-1.
\end{equation*}
Set
\begin{equation} \label{kappa*} (\kappa^*_{n,2})^{-2} := \int (Q^*_{n,2}(x))^2   \frac{{\rm
d}\sigma_{2}(x)}{|\hat{Q}_{n,1}(x)|}, \qquad (\kappa^*_{n,1})^{-2} :=  \int  (Q^*_{n,1}(x))^2      \frac{\tilde{h}_{n}(x){\rm
d}\sigma_{1}(x)}{|\hat{Q}_{n,2}(x)|}.
\end{equation}
\end{definition}
Let us prove the continuity of the operator $\tilde{T}_n$, for each fixed $n \geq 0$. Given $\mathbf{Q}_1 = (Q_{1,1},Q_{1,2})$, $\mathbf{Q}_2 = (Q_{2,1},Q_{2,2}) \in \mathcal{P}_{n,1}\times \mathcal{P}_{n,2} $ define the metric $d_n$ as follows
\[ d_n(\mathbf{Q}_1,\mathbf{Q}_2) = \max \{ \|Q_{1,1} - Q_{1,2}\|_{\Delta_2}, \|Q_{2,1} - Q_{2,2}\|_{\Delta_1} \}
\]
where $\|\cdot\|_\Delta$ denotes the sup-norm on the interval $\Delta$. Suppose that
\[ \lim_{m\to \infty}d_n(\mathbf{Q}_{m},\mathbf{Q}) = 0
\]
and $ \mathbf{Q}_m = ( {Q}_{m,1}, {Q}_{m,2}),  \mathbf{Q} = ( {Q}_{1} , {Q}_{2}) \in \mathcal{P}_{n,1}\times \mathcal{P}_{n,2}$.
From the location of the zeros of the polynomials it readily follows that
\[ \lim_{m\to \infty}\|Q^{-1}_{m,1} -  Q^{-1}_1\|_{\Delta_2} = 0, \qquad  \lim_{m\to \infty}\|Q^{-1}_{m,2} -  Q^{-1}_2\|_{\Delta_1}=0.
\]
Therefore, for each fixed $\nu \geq 0$
\begin{equation}
	\label{moments}
	\lim_{m\to \infty}\int x^\nu \frac{\D \sigma_1}{|Q_{m.2}|} = \int x^\nu \frac{\D \sigma_1}{|Q_{2}|} =      c_{\nu}
\end{equation}
and similarly for the other sequence of polynomials. If $Q_1^*$ is the $n$-th monic orthogonal polynomial with respect to $ \frac{\D \sigma_1}{|Q_{2}|} $, the determinantal formula for the orthogonal polynomials allows to write
\[
	Q_1^*(x) = C_n^{-1} \left|
	\begin{array}{cccc}
	c_0 & c_1 & \cdots & c_{n} \\
	c_1 & c_2 & \cdots & c_{n+1} \\
	\vdots & \vdots & \ddots & \vdots\\
	c_{n-1} & c_n & \cdots & c_{2n-1} \\
	1 & x & \cdots & x^n
	\end{array}
	\right| , \qquad C_n = \left|
	\begin{array}{cccc}
	c_0 & c_1 & \cdots & c_{n-1} \\
	c_1 & c_2 & \cdots & c_{n} \\
	\vdots & \vdots & \ddots & \vdots\\
	c_{n-1} & c_n & \cdots & c_{2n-2} \\
	\end{array}
	\right| \neq 0 ,
	\]
and similar for the other $n$-th orthogonal polynomials $Q^*_{m,1}$, $Q^*_{m,2}$, $Q^*_{2} $. The determinantal formula shows that the orthogonal polynomials depend continuously on the moments of the corresponding measure. This together with \eqref{moments} clearly imply that
\[ \lim_{m\to \infty} d_n(\mathbf{Q}^*_{m},\mathbf{Q}^*) = 0.
\]

From Proposition \ref{prop1} it follows that if we take $\tilde{h}_n = |h_{n,1}|$ then
\[ \tilde{T}_n(Q_{n,1},Q_{n,2}) = (Q_{n,1},Q_{n,2}),
\]
where $(Q_{n,1},Q_{n,2})$ is the unique pair of polynomials of degree $n$ verifying \eqref{int5}-\eqref{int8}. Therefore, in
this case $(Q_{n,1},Q_{n,2})$ is a fixed point of the operator $\tilde{T}_n$. In the case of arbitrary $\tilde{h}_n$  it is not
difficult to prove that $\tilde{T}_n$ also has fixed points. (In general, it may not be unique.)

Indeed, given $n$ if $(\tilde{Q}_{n,1},\tilde{Q}_{n,2})$ is a fixed point then the $n$ zeros of $\tilde{Q}_{n,1}$ must lie in
$\Delta_1$ and the $n$ zeros of $\tilde{Q}_{n,2}$ must be in $\Delta_2$. Consequently, it is sufficient to restrict the
operator $\tilde{T}_n$ to the class $\tilde{\mathcal{P}}_{n,1}\times \tilde{\mathcal{P}}_{n,2}$ of all pairs of monic
polynomials whose first component has all its zeros on $\Delta_1$ and the second has its zeros on $\Delta_2$.
Suppose that
\[ \tilde{Q}_{n,1}(x) = \prod_{j=1}^n (x - x_{n,j}), \qquad \tilde{Q}_{n,2}(x) = \prod_{j=1}^n (x - y_{n,j}).
\]
Assume that the zeros are indexed in such a way that
\[ a_1 \leq x_{n,1} \leq \cdots \leq x_{n,n} \leq b_1, \qquad a_2 \leq y_{n,1} \leq \cdots \leq y_{n,n} \leq b_2.
\]
There is a canonical homeomorphism  between $\tilde{\mathcal{P}}_{n,1}\times \tilde{\mathcal{P}}_{n,2}$
and $\tilde{\Delta}_1 \times \tilde{\Delta}_2$, where $\tilde{\Delta}_k, k=1,2,$ is the subset of $\Delta_k^n$ made up of all
points whose coordinates are increasing, given by
\[ (\tilde{Q}_{n,1},\tilde{Q}_{n,2}) \longrightarrow ((x_{n,1},\ldots,x_{n,n}), (y_{n,1},\ldots,y_{n,n}))
\]
The operator $\tilde{T}_n$ induces an operator from $\tilde{\Delta}_1 \times \tilde{\Delta}_2$ into itself, where the image
is determined by the zeros of $(Q^*_{n,1}, Q_{n,2}^*)= \tilde{T}_n(\tilde{Q}_{n,1}, \tilde{Q}_{n,2})$. The induced operator
is continuous and $\tilde{\Delta}_1 \times \tilde{\Delta}_2$ is a convex compact subset of $\R^n \times \R^n$; therefore,
by Brouwer's fixed point theorem the induced operator has at least one fixed point. Consequently, so does $\tilde{T}_n$.

 We are ready to use Theorem \ref{main_th}.

\begin{proposition}
	\label{prop3}
	Assume that $(\mu_1,\mu_2) \in \mathcal{S}(\mathbf{\Delta})$ and for each $n \geq0$, $(\tilde{Q}_{n,1},\tilde{Q}_{n,2})$ is the pair of monic polynomials of degree $n$ which satisfies  \eqref{predet*}. Let $(\sigma_1,\sigma_2) \in \mathcal{S}(\mathbf{\Delta})$  and  let $({Q}^*_{n,1},{Q}^*_{n,2}) = \tilde{T}_n(\tilde{Q}_{n,1},\tilde{Q}_{n,2})$ where $(\tilde{h}_n)_{n\geq 0}$ fulfills \eqref{hn}. Then
 	\begin{equation}
 	\label{limfund1}
 	\lim_n \frac{q^*_{n,1}(z)}{C_1^{n}\Phi_1^{n}(z)} = \frac{1}{\sqrt{2\pi}}\mathsf{G}(f_2^{-1}  \tilde{h} \sigma_1,z), \qquad \lim_n \frac{q^*_{n,2}(z)}{C_2^{n}\Phi_2^{n}(z)} = \frac{1}{\sqrt{2\pi}}\mathsf{G}(f_{1}^{-1}   \sigma_2,z),
 	\end{equation}
	uniformly on compact subsets of $\Omega_1$ and $\Omega_2$, respectively,  $f_k =  {\mathsf{G}(\mu_k,\cdot)}/{\mathsf{G}(\mu_k,\infty)}, k=1,2$, and $q^*_{n,k} = {\kappa}^*_{n,k} Q^*_{n,k} $ is the corresponding orthonormal polynomial of degree $n$ (see \eqref{kappa*}). Additionally,
\begin{equation}
	\label{conductor1}
	\lim_n \frac{{\kappa}^*_{n,1}}{C_1^n} = \frac{1}{\sqrt{2\pi}}\mathsf{G}(f_{2}^{-1}   \tilde{h} \sigma_1,\infty), \qquad
 	\lim_n \frac{{\kappa}^*_{n,2}}{C_2^n} = \frac{1}{\sqrt{2\pi}}\mathsf{G}(f_{1}^{-1}   \sigma_2,\infty),  \qquad k=1,2.
\end{equation}

Consequently,
\begin{equation}
 	\label{limfund*1}
 	\lim_n \frac{Q^*_{n,1}(z)}{\Phi_1^{n}(z)} = \frac{\mathsf{G}(f_2^{-1}  \tilde{h} \sigma_1,z)}{ \mathsf{G}(f_2^{-1} \tilde{h} \sigma_1,\infty)}, \qquad \lim_n \frac{Q^*_{n,2}(z)}{ \Phi_2^{n}(z)} = \frac{ \mathsf{G}(f_{1}^{-1}   \sigma_2,z)}{\mathsf{G}(f_{1}^{-1} \sigma_2,\infty)}.
 	\end{equation}
\end{proposition}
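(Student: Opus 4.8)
The plan is to handle the two components $q^*_{n,1}$ and $q^*_{n,2}$ separately, in each case recognizing the relevant sequence as orthonormal polynomials with respect to a varying measure on a single interval and then invoking Theorem \ref{main_th}; the asymptotic input for the varying weights is supplied by the prescribed behavior \eqref{predet*} of $\tilde{Q}_{n,1},\tilde{Q}_{n,2}$ on the \emph{opposite} interval. Consider first $k=2$. By the definition \eqref{Tn} of $\tilde{T}_n$, $q^*_{n,2}$ is the $n$-th orthonormal polynomial for the varying measure $\D\sigma_2/|\tilde{Q}_{n,1}|$ on $\Delta_2$; here the underlying measure $\mu_n\equiv\sigma_2$ is fixed and $w_{2n}:=\tilde{Q}_{n,1}$ has degree $n\le 2n$ with all its zeros in $\Delta_1\subset\C\setminus\Delta_2$. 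I must verify $i)$–$iv)$ on $\Delta_2$. Conditions $i)$ and $ii)$ are immediate: $\mu=\sigma_2\in\mathcal{S}(\Delta_2)$ (from $(\sigma_1,\sigma_2)\in\mathcal{S}(\mathbf{\Delta})$), so $\sigma_2'>0$ a.e., and the convergence and $\liminf$ requirements hold trivially since $\mu_n$ does not vary. For $iii)$, since $\Delta_1$ is a compact subset of $\Omega_2$ there is $\delta>0$ with $|\Psi_2(x)|\ge 1+\delta$ for all $x\in\Delta_1$, where $\Psi_2$ is the conformal map of $\Omega_2$; hence each of the $n$ finite zeros contributes at least $\delta/(1+\delta)$ to the Blaschke sum in $iii)$, which therefore diverges.

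The decisive point is $iv)$. Because $\Delta_2$ is a compact subset of $\Omega_1$, the limit \eqref{predet*} with $k=1$ holds uniformly on the closed interval $\Delta_2$, giving $|\tilde{Q}_{n,1}(x)|=|\Phi_1(x)|^{n}\,|f_1(x)|\,(1+o(1))$ uniformly on $\Delta_2$, with $|\Phi_1(x)|=e^{-V_{\lambda_1}(x)}$. I therefore set $\varphi:=e^{V_{\lambda_1}}$ and $\psi:=f_1^{-1}$ on $\Delta_2$; both are continuous and positive there, and $f_1>0$ on $\Delta_2$ because $\Delta_2$ lies on one side of $\Delta_1$, so the defining integral in \eqref{green} for $\mathsf{G}(\mu_1,\cdot)$ is real. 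Then $\varphi^{n}|w_{2n}|\to|f_1|=1/\psi$. With this choice the external field $-\tfrac12\ln\varphi=-\tfrac12 V_{\lambda_1}$ is exactly the one appearing in the second line of the vector equilibrium problem \eqref{vector_equil}; by uniqueness of the equilibrium measure the solution is $\lambda_\varphi=\lambda_2$, with $\gamma=\gamma_2$, so that $\supp\lambda_\varphi=\Delta_2$, $\Phi=\Phi_2$ and $C=C_2$. The uniform convergence on $\Delta_2$ also yields $\ln(\varphi^n|w_{2n}|)\to\ln|f_1|=-\ln\psi$ uniformly, which gives the integral requirement \eqref{rest_var_w}. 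Theorem \ref{main_th} now produces the second relation in \eqref{limfund1}, together with \eqref{conductor1} (evaluating at $\infty$, cf. \eqref{constante}) and \eqref{limfund*1} (cf. \eqref{asintc}), the limiting Szeg\H{o} function being $\mathsf{G}(\psi\mu,\cdot)=\mathsf{G}(f_1^{-1}\sigma_2,\cdot)$.

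For $k=1$ the argument is symmetric, with one difference: $q^*_{n,1}$ is orthonormal with respect to $\tilde{h}_n\,\D\sigma_1/|\tilde{Q}_{n,2}|$, so the underlying measure $\mu_n=\tilde{h}_n\sigma_1$ genuinely varies. Conditions $i)$ and $ii)$ follow from \eqref{hn}: $\tilde{h}_n\to\tilde{h}>0$ uniformly forces the $L_1$ convergence of densities and the convergence of the Szeg\H{o} integrals, while $\tilde{h}\sigma_1\in\mathcal{S}(\Delta_1)$; thus the limit measure is $\mu=\tilde{h}\sigma_1$. Condition $iii)$ holds because the zeros of $w_{2n}:=\tilde{Q}_{n,2}$ lie in $\Delta_2\subset\Omega_1$, exactly as before. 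For $iv)$ I use \eqref{predet*} with $k=2$ (uniform on $\Delta_1\subset\Omega_2$) and take $\varphi:=e^{V_{\lambda_2}}$, $\psi:=f_2^{-1}$ on $\Delta_1$; the first line of \eqref{vector_equil} identifies $\lambda_\varphi=\lambda_1$, $\gamma=\gamma_1$, $\Phi=\Phi_1$, $C=C_1$. Theorem \ref{main_th}, together with the remark following Lemma \ref{l1} on measures of the form $h_n\,\D\overline{\mu}$, gives the limiting Szeg\H{o} function $\mathsf{G}(\psi\,\tilde{h}\sigma_1,\cdot)=\mathsf{G}(f_2^{-1}\tilde{h}\sigma_1,\cdot)$, which is precisely the first relation in \eqref{limfund1}; \eqref{conductor1} and \eqref{limfund*1} follow as above.

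The only genuinely delicate point is the verification of $iv)$: one must correctly match the external field $-\tfrac12\ln\varphi$ coming from the opposite-interval asymptotics of $\tilde{Q}_{n,3-k}$ with the appropriate line of the vector equilibrium system \eqref{vector_equil}, so that the comparison function produced by Theorem \ref{main_th} is exactly $C_k\Phi_k$ rather than some unrelated $C\Phi$. Once this identification is made, the fact that \eqref{predet*} converges uniformly on the whole closed interval $\Delta_k$ (a compact subset of $\Omega_{3-k}$) to the continuous, non-vanishing, positive limit $f_{3-k}$ makes both the uniform-limit and the integral parts of $iv)$ immediate, so that the remainder is a direct appeal to Theorem \ref{main_th}.
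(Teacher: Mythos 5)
Your proposal is correct and follows essentially the same route as the paper: the paper's own proof consists precisely of observing that the pairs $(\tilde{h}_n\,\D\sigma_1,\tilde{Q}_{n,2})_{n\geq 0}$ and $(\D\sigma_2,\tilde{Q}_{n,1})_{n\geq 0}$ satisfy conditions $i)$--$iv)$ and then invoking Proposition \ref{prop2}, Theorem \ref{main_th}, the equilibrium equations \eqref{vector_equil}, and the definitions \eqref{compfunc}. Your write-up merely fills in the details the paper dismisses with ``it is easy to see,'' including the key identification $\lambda_\varphi=\lambda_k$, $\gamma=\gamma_k$ via uniqueness of the equilibrium measure, which is exactly the intended argument.
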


 \begin{proof}
	It is easy to see that the sequences $(\tilde{h}_n \D \sigma_1, \tilde{Q}_{n,2})_{n\geq 0},  (\D \sigma_2, \tilde{Q}_{n,1})_{n\geq 0}$ verify $i)-iv)$ on the intervals $\Delta_1$ and $\Delta_2$. Therefore, the assumptions of Theorem \ref{main_th} are fulfilled. Consequently, \eqref{limfund1}-\eqref{limfund*1} follow directly from Proposition \ref{prop2} and Theorem \ref{main_th}  taking into account the equilibrium equations \eqref{vector_equil} verified by the equilibrium measures and the defining formulas \eqref{compfunc} for the functions $\Phi_k$ and constants $C_k, k=1,2$.
\end{proof}

Formulas \eqref{limfund1}-\eqref{limfund*1} describe the strong asymptotic behavior of the components of the image of $\tilde{T}_n$. In the next section, we give an operator approach to interpret the limiting functions appearing in these relations.

\subsection{The operator $T_\mathbf{w}$}

The Szeg\H{o} functions which describe the limits \eqref{limfund1}-\eqref{limfund*1} verify the boundary equations (see \eqref{Glimit}-\eqref{szego_BVP})
\begin{equation}
	\label{boundeq1}
	|\mathsf{G}(f_2^{-1}  \tilde{h} \sigma_1,x)|^2 = \frac{|f_2(x)|}{\sqrt{(b_1 - x)(x-a_1)}(\tilde{h}\sigma_1')(x)}, \quad
	\mbox{a.e. on} \quad [a_1,b_1] = \Delta_1,
\end{equation}
and
\begin{equation}
	\label{boundeq2}
	|\mathsf{G}(f_1^{-1}    \sigma_2,x)|^2 = \frac{|f_1(x)|}{\sqrt{(b_2 - x)(x-a_2)}\sigma_2'(x)}, \quad
	\mbox{a.e. on} \quad [a_2,b_2] = \Delta_2.
\end{equation}
The functions $f_1,f_2$ themselves are expressed in terms of Szeg\H{o} functions and $\tilde{h}$ is a positive continuous function on $\Delta_1$. The Szeg\H{o} functions above are symmetric with respect to the real line, never equal zero, and are positive at infinity. Consequently, on the real line, outside of the intervals supporting their defining measures, they are positive. Relations \eqref{boundeq1}-\eqref{boundeq2} suggest the definition of an operator.

Let $\mathbf{\Delta} = (\Delta_1,\Delta_2)$. We denote by $\mathbf{C}_{\mathbf{\Delta}}$ the space of all pairs $\mathbf{g} = (g_1,g_2)$ of real valued  functions such  that $g_1$ is continuous on $\Delta_2$ and $g_2$ is continuous on $\Delta_1$. The functions $g_1$ and $g_2$ could be defined on certain supersets of $\Delta_2$ and $\Delta_1$, respectively, but for the time being we only need to specify their analytic properties as indicated. Set
\[ \|\mathbf{g}\|_{\mathbf{C}_{\mathbf{\Delta}}} := \max\{\|g_1\|_{\Delta_2},\|g_2\|_{\Delta_1}\},
\]
where $\|\cdot\|_X$ denotes the sup norm on $X$. Obviously $(\mathbf{C}_{\mathbf{\Delta}},\|\cdot\|_{\mathbf{C}_{\mathbf{\Delta}}})$ is a Banach space. Consider the cone $\mathbf{C}^+_{\mathbf{\Delta}}$ of all the vectors in $\mathbf{C}_{\mathbf{\Delta}}$ such that $g_1$ is positive on $\Delta_2$ and $g_2$ is positive on $\Delta_1$. The application $(g_1,g_2) \mapsto (\ln g_1, \ln g_2)$ establishes a homeomorphism between $\mathbf{C}^+_{\mathbf{\Delta}}$ and $\mathbf{C}_{\mathbf{\Delta}}$. Given $\mathbf{g}^{(1)} = (g^{(1)}_1,g^{(1)}_2),\mathbf{g}^{(2)} = (g^{(2)}_1,g^{(2)}_2) \in \mathbf{C}^+_{\mathbf{\Delta}}$, set
\[ d(\mathbf{g}^{(1)}, \mathbf{g}^{(2)})  := \max\{\|\ln(g^{(1)}_1/ {g}^{(2)}_1)\|_{\Delta_2}, \|\ln(g^{(1)}_2/{g}^{(2)}_2)\|_{\Delta_1}\}.
\]
It is easy to check that $(\mathbf{C}^+_{\mathbf{\Delta}},d)$ is a complete metric space. Certainly, on
$\mathbf{C}^+_{\mathbf{\Delta}}$ we can also consider the norm $\|\cdot\|_{\mathbf{C}_{\mathbf{\Delta}}}$ but $\mathbf{C}^+_{\mathbf{\Delta}}$ is not complete with that norm; however, given a sequence $(\mathbf{g}^{(n)})_{n\geq 0}\subset \mathbf{C}^+_{\mathbf{\Delta}}$ and $\mathbf{g} \in \mathbf{C}^+_{\mathbf{\Delta}}$, we have
	\begin{equation}
		\label{equiv}
		\lim_n \|\mathbf{g}^{(n)} - \mathbf{g}\|_{\mathbf{C}_{\mathbf{\Delta}}} = 0 \quad \Leftrightarrow \quad \lim_n
d(\mathbf{g}^{(n)},\mathbf{g}) =0.
	\end{equation}

	Now we are in a position to give a precise definition of the operator hinted by relations
\eqref{boundeq1}-\eqref{boundeq2}.
	\begin{definition}
		\label{oper_T}
		Let $w_1$ and $w_2$ be two integrable functions satisfying Szeg\H{o}'s condition on $\Delta_1$ and $\Delta_2$,
respectively, and write $\mathbf{w}=(w_1,w_2)$. Define the operator
		\[ T_\mathbf{w}: \mathbf{C}^+_{\mathbf{\Delta}}\longrightarrow  \mathbf{C}^+_{\mathbf{\Delta}},
		\]
		where $T_\mathbf{w}(g_1,g_2) = (g_1^*,g_2^*)$ is the pair of Szeg\H{o} functions,  $g_k^*\in
\mathcal{H}(\Omega_k), k=1,2,$ verifying
		\begin{align*}
			|g_1^*(x)|^2 =& \frac{g_2(x)}{w_1(x)}, \quad
			\mbox{a.e. on} \quad [a_1,b_1] = \Delta_1,\\ % \label{oper_T1}
			|g_2^*(x)|^2 =& \frac{g_1(x)}{w_2(x)}, \quad
			\mbox{a.e. on} \quad [a_2,b_2] = \Delta_2. % \label{oper_T2}
		\end{align*}	
	\end{definition}

	From the definition of the Szeg\H{o} function it readily follows that $g_1^*$ is positive and continuous on $\R \setminus \Delta_1 \supset \Delta_2$ and $g_2^*$ is positive and continuous on $\R \setminus \Delta_2 \supset \Delta_1$.
	
	Finding $g_k^*, k=1,2,$ reduces to solving the Dirichlet problems for a harmonic function $u_k$ in $\Omega_k,$ with boundary values integrable on $\Delta_k$ and equal to $\frac{1}{2} \ln (g_2/w_1)$ a.e. on $\Delta_1$ in the case of $u_1$  and $\frac{1}{2} \ln (g_1/w_2)$ a.e. on $\Delta_2$ in the case of $u_2$, and the subsequent problem of finding their harmonic conjugates $\widetilde{u}_k,\widetilde{u}_k(\infty)=0,$ in $\Omega_k$. Then
	\[ g_k^* = \exp(u_k +i\, \widetilde{u}_k), \qquad k=1,2.
	\]

Set $\mathbf{\Omega} = (\Omega_1,\Omega_2)$. Let $\mathbf{h}_{\mathbf{\Omega}}$ be the set of pairs of harmonic functions in $\Omega_1$ and $\Omega_2$, respectively, with integrable boundary values. Given $\mathbf{g} = (g_1,g_2) \in \mathbf{C}^+_{\mathbf{\Delta}}$ let $\mathbf{\chi}= (\chi_1,\chi_2) = (\ln g_1,\ln g_2) \in \mathbf{C}_{\mathbf{\Delta}}$.
The map $T_\mathbf{w}$ induces the map
\[ t_\mathbf{w}: \mathbf{C}_{\mathbf{\Delta}} \longrightarrow \mathbf{h}_{\mathbf{\Omega}}\subset
\mathbf{C}_{\mathbf{\Delta}},
\]
where
\[t_\mathbf{w}(\chi) :=  \frac{1}{2} (P(\chi) + \beta), \qquad \chi = (\chi_1,\chi_2)^t \in \mathbf{C}_{\mathbf{\Delta}},
\]
$\beta = (\beta_1,\beta_2)^t$ is the (column) vector made up of harmonic functions with boundary values
\[ \beta_k(x) = -\ln w_k(x), \qquad \mbox{a.e. on} \quad \Delta_k, \,k=1,2
\]
and $P$ is the linear operator
\[ P :=
\left(
\begin{array}{cc}
0 & P_{1,2} \\
P_{2,1} & 0
\end{array}
\right),
\]
such that $P_{1,2}(\chi_2)$ is the harmonic function on $\Omega_1$ with boundary values equal to $\chi_2$ on
$\Delta_1$, and $P_{2,1}(\chi_1)$ is the harmonic function on $\Omega_2$ with boundary values equal to $\chi_1$ on
$\Delta_2$.

The following result is contained in \cite[Proposition 1.1]{sasha}.

\begin{proposition}
	\label{prop4}
	The operator $T_\mathbf{w}$ (see Def. \ref{oper_T}) is a contraction in $\mathbf{C}^+_{\mathbf{\Delta}}$ with respect
to the metric $d$. More precisely
		\[ d(T_\mathbf{w}(\mathbf{g}^{(1)}),T_\mathbf{w}(\mathbf{g}^{(2)}))  \leq
\frac{1}{2}d(\mathbf{g}^{(1)},\mathbf{g}^{(2)}), \qquad \mathbf{g}^{(1)},\mathbf{g}^{(2)} \in
\mathbf{C}^+_{\mathbf{\Delta}}.
		\]
	Therefore, the map $T_\mathbf{w}$ has a unique fixed point in $\mathbf{C}^+_{\mathbf{\Delta}}$.
\end{proposition}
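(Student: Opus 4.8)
The plan is to pass to the logarithmic scale, on which $T$ becomes the affine map $t$ defined just before the statement, and to show that its linear part $\tfrac12 P$ contracts with factor $\tfrac12$ in the norm $\|\cdot\|_{\mathbf{C}_{\mathbf{\Delta}}}$. Since $(g_1,g_2)\mapsto(\ln g_1,\ln g_2)$ is a homeomorphism of $\mathbf{C}^+_{\mathbf{\Delta}}$ onto $\mathbf{C}_{\mathbf{\Delta}}$ carrying $d$ onto $\|\cdot\|_{\mathbf{C}_{\mathbf{\Delta}}}$---explicitly, $d(\mathbf{g}^{(1)},\mathbf{g}^{(2)}) = \|\mathbf{\chi}^{(1)}-\mathbf{\chi}^{(2)}\|_{\mathbf{C}_{\mathbf{\Delta}}}$ with $\mathbf{\chi}^{(i)}=(\ln g_1^{(i)},\ln g_2^{(i)})$---it suffices to establish the contraction for $t$ in the Banach space $\mathbf{C}_{\mathbf{\Delta}}$.

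First I would use that $t(\mathbf{\chi})=\tfrac12(P(\mathbf{\chi})+\beta)$ is affine. Hence the inhomogeneous term $\beta$, whose boundary values $-\ln w_k$ may be singular at the endpoints of the intervals (the weights $w_k$ vanish there), cancels in every difference:
\[ t(\mathbf{\chi}^{(1)})-t(\mathbf{\chi}^{(2)}) = \tfrac12\, P\bigl(\mathbf{\chi}^{(1)}-\mathbf{\chi}^{(2)}\bigr). \]
This is the decisive simplification: the difference $\mathbf{\chi}^{(1)}-\mathbf{\chi}^{(2)}$ is a genuine element of $\mathbf{C}_{\mathbf{\Delta}}$ (continuous and bounded on the compact intervals), so the Dirichlet problems defining $P$ have continuous bounded data and the maximum principle applies with no complication from the singular weights.

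Next I would bound $\|P\|$. Writing $\mathbf{\psi}=(\psi_1,\psi_2)=\mathbf{\chi}^{(1)}-\mathbf{\chi}^{(2)}$, the function $P_{1,2}(\psi_2)$ is harmonic and bounded in $\Omega_1$ with boundary values $\psi_2$ on $\Delta_1$; representing it through the harmonic measure $\omega_z$ of $\Omega_1$ at $z$ (a probability measure on $\Delta_1$) gives $P_{1,2}(\psi_2)(z)=\int_{\Delta_1}\psi_2\,\D\omega_z$, so that $\|P_{1,2}(\psi_2)\|_{\Delta_2}\le\|\psi_2\|_{\Delta_1}$. The same argument in $\Omega_2$ yields $\|P_{2,1}(\psi_1)\|_{\Delta_1}\le\|\psi_1\|_{\Delta_2}$. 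Taking the maximum of the two estimates gives $\|P(\mathbf{\psi})\|_{\mathbf{C}_{\mathbf{\Delta}}}\le\|\mathbf{\psi}\|_{\mathbf{C}_{\mathbf{\Delta}}}$, that is $\|P\|\le1$.

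Combining the two displays, $\|t(\mathbf{\chi}^{(1)})-t(\mathbf{\chi}^{(2)})\|_{\mathbf{C}_{\mathbf{\Delta}}}\le\tfrac12\|\mathbf{\chi}^{(1)}-\mathbf{\chi}^{(2)}\|_{\mathbf{C}_{\mathbf{\Delta}}}$, which translates back to $d(T(\mathbf{g}^{(1)}),T(\mathbf{g}^{(2)}))\le\tfrac12\, d(\mathbf{g}^{(1)},\mathbf{g}^{(2)})$. As $(\mathbf{C}^+_{\mathbf{\Delta}},d)$ is complete, the Banach fixed point theorem then yields a unique fixed point. The main obstacle is precisely the estimate $\|P\|\le1$: one must justify that the Dirichlet solutions in the unbounded domains $\Omega_k$ are bounded and obey the maximum principle---equivalently, that each $\Delta_k$ is regular for the Dirichlet problem and that the harmonic measure is a probability measure with the correct behavior at $\infty$---after which the contraction factor $\tfrac12$ is forced by the structure of $t$.
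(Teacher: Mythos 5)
Your proposal is correct and follows essentially the same route as the paper's proof: pass to logarithms so that $d$ becomes the norm $\|\cdot\|_{\mathbf{C}_{\mathbf{\Delta}}}$, exploit the affine structure of $t$ so that $\beta$ cancels and only $\tfrac12 P$ remains, bound $\|P\|\leq 1$ by the maximum principle, and invoke completeness of $(\mathbf{C}^+_{\mathbf{\Delta}},d)$ with the Banach fixed point theorem. The only difference is that you spell out the maximum-principle step via harmonic measure, which the paper leaves implicit.
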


\begin{proof}
	The proof is  simple so, for completeness, we include it. As mentioned above, $(\mathbf{C}^+_{\mathbf{\Delta}},d)$ is a complete metric space  so the second statement follows from the first.

	Set $\mathbf{\chi}^{(k)}  := (\ln g^{(k)}_1,\ln g^{(k)}_2), k=1,2$.  From the definitions of $d, T_\mathbf{w},$ and
$t_\mathbf{w}$  it follows that
		\[ d(T_\mathbf{w}(\mathbf{g}^{(1)}),T_\mathbf{w}(\mathbf{g}^{(2)})) = \|t_\mathbf{w}(\chi^{(1)}) -
t_\mathbf{w}(\chi^{(2)})\|_{\mathbb{C}_{\mathbf{\Delta}}} = \frac{1}{2}\|P(\chi^{(1)}) -
P(\chi^{(2)})\|_{\mathbb{C}_{\mathbf{\Delta}}} \leq
		\]
		\[ \frac{1}{2} \|P\|\|\chi^{(1)} - \chi^{(2)}\|_{\mathbb{C}_{\mathbf{\Delta}}} = \frac{1}{2}
d(\mathbf{g}^{(1)},\mathbf{g}^{(2)}),
		\]
	where in the last inequality the maximum principle is used to establish that $\|P\| = 1$.
\end{proof}

 Consider now the particular case $T_{\widetilde{\mathbf{w}}}$ where
\begin{equation}
	\label{oper_T_tilde}
	\widetilde{\mathbf{w}} := \left(\sqrt{(b_1 - x)(x-a_1)}(\tilde{h}\sigma_1')(x), \;\sqrt{(b_2 - x)(x-a_2)} \sigma_2'(x)\right).
\end{equation}
Here $\tilde{h}, \sigma_1, \sigma_2$ are the same as in \eqref{hn}, \eqref{boundeq1} and \eqref{boundeq2}. Then, let $\mathbf{\mathsf{G}} = (\mathsf{G}_1, \mathsf{G}_2)$ be the unique fixed point of the operator $T_{\widetilde{\mathbf{w}}}$.
That is
	\[T_{\widetilde{\mathbf{w}}}(\mathbf{\mathsf{G}}) = \mathbf{\mathsf{G}} \]
and the components of $\mathbf{\mathsf{G}} $ are characterized by the  system of boundary values
\begin{equation*}
	\label{boundeq5}
	|\mathsf{G}_1(x)|^2 = \frac{\mathsf{G}_2(x)}{\sqrt{(b_1 - x)(x-a_1)}( \tilde{h}\sigma_1')(x)}, \quad
	\mbox{a.e. on} \quad [a_1,b_1] = \Delta_1,
\end{equation*}
and
\begin{equation*}
	\label{boundeq6}
	|\mathsf{G}_2(x)|^2 = \frac{\mathsf{G}_1(x)}{\sqrt{(b_2 - x)(x-a_2)} \sigma_2'(x)}, \quad
	\mbox{a.e. on} \quad [a_2,b_2] = \Delta_2.
\end{equation*}
Obviously, the components of $\mathbf{\mathsf{G}}$ are Szeg\H{o} functions in $\Omega_1$ and $\Omega_2$,
respectively.

Now, we must show  that any neighborhood of a fixed point of the operator $T_{\widetilde{\mathbf{w}}}$ determines fixed points of the operators $\tilde{T}_n$ for all sufficiently large $n$. By Proposition \ref{prop1}, when we take $\tilde{h}_n = |h_{n,1}|$ as in \eqref{Qs}, the operator $\tilde{T}_n$ has only one fixed point. We need one last ingredient.

\begin{proposition}
	\label{prop5}
	Let $(\tilde{Q}_{n,1},\tilde{Q}_{n,2})_{n\geq 0}$ be an arbitrary sequence of vector polynomials such that
$(\tilde{Q}_{n,1},\tilde{Q}_{n,2}) \in \mathcal{P}_{n,1} \times \mathcal{P}_{n,2}$. Set
	\[ \mathbf{f}^{(n)} = \left(\frac{\tilde{Q}_{n,1}}{\Phi_1^{n}} , \frac{\tilde{Q}_{n,2}}{\Phi_2^{n}}\right).
	\]
	Assume that there exists $\mathbf{f} = (f_1,f_2) \in \mathbf{C}^+_{\mathbf{\Delta}}$ and a sequence of non-negative
integers $\Lambda$ such that
	\begin{equation}
		\label{limden}
		\lim_{n\in \Lambda} \|\mathbf{f}^{(n)}-\mathbf{f}\|_{\mathbf{C}_{\mathbf{\Delta}}}  = 0.
 	\end{equation}
 	Let $(\sigma_1,\sigma_2) \in \mathcal{S}(\mathbf{\Delta})$  and  let $({Q}^*_{n,1},{Q}^*_{n,2}) =
 \tilde{T}_n(\tilde{Q}_{n,1},\tilde{Q}_{n,2})$ where $(\tilde{h}_n)_{n\geq 0}$ fulfills \eqref{hn}. Then
 	\begin{equation}
 		\label{limfund}
 		\lim_{n \in \Lambda} \frac{q^*_{n,1}(z)}{C_1^{n}\Phi_1^{n}(z)} = \frac{1}{\sqrt{2\pi}}\mathsf{G}(f_2^{-1}  \tilde{h}
 \sigma_1,z), \qquad \lim_{n \in \Lambda} \frac{q^*_{n,2}(z)}{C_2^{n}\Phi_2^{n}(z)} =
 \frac{1}{\sqrt{2\pi}}\mathsf{G}(f_{1}^{-1}   \sigma_2,z),
 	\end{equation}
 	uniformly on compact subsets of $\Omega_1$ and $\Omega_2$, respectively. Additionally,
	\begin{equation*}
		%\label{conductor}
 		\lim_{n\in \Lambda} \frac{{\kappa}^*_{n,1}}{C_1^n} = \frac{1}{\sqrt{2\pi}}\mathsf{G}(f_{2}^{-1}   \tilde{h}
 \sigma_1,\infty), \qquad
		\lim_{n\in \Lambda}\frac{{\kappa}^*_{n,2}}{C_2^n} = \frac{1}{\sqrt{2\pi}}\mathsf{G}(f_{1}^{-1}   \sigma_2,\infty),
\qquad k=1,2.
 	\end{equation*}
 Consequently,
 	\begin{equation}
 		\label{limfund*}
 		\lim_{n\in \Lambda} \frac{Q^*_{n,1}(z)}{\Phi_1^{n}(z)} =  \frac{\mathsf{G}(f_2^{-1}  \tilde{h} \sigma_1,z)}{
 \mathsf{G}(f_2^{-1}  \tilde{h} \sigma_1,\infty)}, \qquad \lim_{n\in \Lambda} \frac{Q^*_{n,2}(z)}{ \Phi_2^{n}(z)} = \frac{
 \mathsf{G}(f_{1}^{-1}   \sigma_2,z)}{ \mathsf{G}(f_{1}^{-1}   \sigma_2,\infty)}.
 	\end{equation}
 \end{proposition}

 \begin{proof}
	The proof is identical to that of Proposition \ref{prop3}. In that proof, it is not used that the full sequences of indices is considered and the result only depends on the asymptotic behavior of the sequences of denominators of the varying part of the measures of orthogonality on the intervals $\Delta_1$ and $\Delta_2$, respectively, for which assumption \eqref{limden} was included. The details are left to the reader.
 \end{proof}

\subsection{Proof of Theorem \ref{main}}

Let ${\mathsf{G}}= (\mathsf{G}_1,\mathsf{G}_2)$ be the fixed point of the operator $T_{\widetilde{\mathbf{w}}}$ (see Def. \ref{oper_T} and \eqref{oper_T_tilde}). The function $ \mathsf{G}_k, k=1,2$ is a Szeg\H{o} function in $\Omega_k$; therefore, the value $\mathsf{G}_k(\infty)$ is well defined. Set
	\[ H^{+,n} := \left\{\left(\frac{\mathsf{G}_1(\infty)P_{n,1}}{\Phi_1^n}, \frac{\mathsf{G}_2(\infty)P_{n,2}}{\Phi_2^n}\right): P_{n,k} \in \mathcal{P}_{n,k}, k=1,2\right\}.
	\]
(Recall that $\mathcal{P}_{n,k}$ is the set of all monic polynomials of degree $n$ with real coefficients whose zeros lie in $\Omega_j, j \neq k, j,k =1,2$.)

Let $\tilde{T}_{n,1}$ and $\tilde{T}_{n,2}$ be the operators defined on $\mathcal{P}_{n,1}\times \mathcal{P}_{n,2}$ which determine the components of $\tilde{T}_n$; that is, $\tilde{T}_n = (\tilde{T}_{n,1},\tilde{T}_{n,2})$ (see \eqref{Tn}). Define
	\[  T_n : H^{+,n} \longrightarrow H^{+,n}
	\]
where
	\[ T_n\left(\frac{\mathsf{G}_1(\infty)P_{n,1}}{\Phi_1^n}, \frac{\mathsf{G}_2(\infty)P_{n,2}}{\Phi_2^n}\right) = \left(
\frac{\mathsf{G}_1(\infty)\tilde{T}_{n,1}(P_{n,1},P_{n,2})}{\Phi_1^n},
\frac{\mathsf{G}_1(\infty)\tilde{T}_{n,2}(P_{n,1},P_{n,2})}{ \Phi_2^n}\right).
	\]
Notice that any fixed point of $T_n$ generates a fixed point of $\tilde{T}_n$. The continuity of $\tilde{T}_n$ implies the
continuity of $T_n$.

\begin{theorem}
	\label{main}
	Let $(\sigma_1,\sigma_2) \in \mathcal{S}(\mathbf{\Delta})$  and  $(\tilde{h}_n)_{n\geq 0}$ fulfills \eqref{hn}. Then, there exists a sequence $(Q_{n,1},Q_{n,2})_{n\geq 0}$, where $(Q_{n,1},Q_{n,2})$ is a fixed point of $\tilde{T}_n$, such that
	\begin{equation}
 		\label{limfund***}
 		\lim_n \frac{Q_{n,k}(z)}{\Phi_k^{n}(z)} =  \frac{\mathsf{G}_k(z)}{ \mathsf{G}_k(\infty)} , \qquad k=1,2,
 	\end{equation}
	uniformly on compact subsets of $\Omega_k$, where $\mathsf{G} = (\mathsf{G}_1, \mathsf{G}_2)$ is the fixed point of $T_{\widetilde{\mathbf{w}}}$ and $\Phi_k$ are as in \eqref{compfunc}. Additionally, if
 		\[ \  (\kappa_{n,2})^{-2} := \int    Q^2_{n,2}(x)  \frac{{\rm d}\sigma_{2}(x)}{|{Q}_{n,1}(x)|}, \qquad (\kappa_{n,1})^{-2}
 :=  \int  Q^2_{n,1}(x)      \frac{\tilde{h}_{n}(x){\rm d}\sigma_{1}(x)}{|{Q}_{n,2}(x)|},
 		\]
 	and $q_{n,k} = \kappa_{n,k}Q_{n,k}, k=1,2$, then
	\begin{equation*}
		%\label{conductor}
 		\lim_n \frac{{\kappa}_{n,k}}{C_k^n} = \frac{1}{\sqrt{2\pi}}\mathsf{G}_k(\infty),
 	\end{equation*}
 	and
 	\begin{equation}
 		\label{limfund2}
 		\lim_n \frac{q_{n,k}(z)}{C_k^{n}\Phi_k^{n}(z)} = \frac{1}{\sqrt{2\pi}}\mathsf{G}_k(z), \qquad k=1,2,
 	\end{equation}
 	uniformly on compact subsets of $\Omega_k$.
\end{theorem}

\begin{proof}
Due to the way in which $H^{+,n}, T_n,$ and $T_{\widetilde{\mathbf{w}}}$ were defined, the statements
\eqref{limfund***}-\eqref{limfund2} follow directly from Proposition \ref{prop5} if we show that there exists a sequence
$(\mathbf{g}^{(n)})_{n \geq n_0}$, where $\mathbf{g}^{(n)}$ is a fixed point of $T_n$, such that
	\begin{equation}
		\label{fixed} \lim_{n \geq n_0} \|\mathbf{g}^{(n)} - \mathsf{G}\|_{\mathbf{C}_{\mathbf{\Delta}}} = 0.
	\end{equation}

	The components of $\mathsf{G}$ are Szeg\H{o} functions in $\Omega_k, k=1,2$, respectively.  Let $\mathbf{K} = (K_1, K_2)$ be a pair of  non intersecting closed disks, symmetric with respect to $\R$,  whose interior in the Euclidean topology of $\C$ verify
		\[ \Delta_2 \subset \stackrel{\circ}{K}_1, \qquad  \Delta_1 \subset \stackrel{\circ}{K}_2.
		\]
	By $H^+(\mathbf{K})$ we denote the cone of all pairs $(g_1,g_2)$ of functions such that $g_k$ is holomorphic and different from zero in $\stackrel{\circ}{K}_k$, and positive on $\stackrel{\circ}{K}_k \cap \R$. For $\mathbf{g} = (g_1,g_2) \in H^+(\mathbf{K})$ we define
		\[ \|\mathbf{g}\|_{\mathbf{K}} := \max\{\{\sup|g_k(z)|: z \in \stackrel{\circ}{K}_k\}: k=1,2\} \,(\leq \infty),
		\]
and
\[ \min_{\mathbf{\Delta}} \mathbf{g} := \min \{\min_{x\in \Delta_2} g_1(x), \min_{x \in \Delta_1} g_2(x)\}.
\]
Fix a constant $C > 0$. Define
		\[ H^+(\mathbf{K}, C) := \{\mathbf{g}  \in H^+(\mathbf{K}) : \|\mathbf{g}\|_{\mathbf{K}} \leq C, \min_{\mathbf{\Delta}}
\mathbf{g} \geq   C^{-1}\}.
\]
Take $C$ sufficiently large so that $\mathsf{G} \in H^+(\mathbf{K}, C)$.	

Let $\mathbf{g}^{(1)} , \mathbf{g}^{(2)}\in  H^+(\mathbf{K}, C)$ and let $0\leq \beta \leq 1$,
then
\[ \|\beta \mathbf{g}^{(1)} + (1-\beta) \mathbf{g}^{(2)}\|_{\mathbf{K}} \leq \beta \|\mathbf{g}^{(1)}\|_{\mathbf{K}} +
(1-\beta)\|\mathbf{g}^{(2)}\|_{\mathbf{K}} \leq C,
\]
and
\[ \min_{ \mathbf{\Delta}} \left(\beta \mathbf{g}^{(1)} + (1-\beta) \mathbf{g}^{(2)}\right)   \geq \beta \min_{\mathbf{\Delta}}
\mathbf{g}^{(1)} + (1-\beta)\min_{   \mathbf{\Delta}} \mathbf{g}^{(2)} \geq C^{-1},
\]
Therefore, $ \beta \mathbf{g}^{(1)} + (1-\beta) \mathbf{g}^{(2)}\in H^+(\mathbf{K}, C)$.
This shows that $H^+(\mathbf{K}, C)$ is convex.

On the other hand, if $(\mathbf{g}^{(n)})_{n \geq 0}$ is an arbitrary sequence of elements in $ H^+(\mathbf{K}, C)$. Then the components form normal families in $\stackrel{\circ}{K}_1$ and $\stackrel{\circ}{K}_2$, respectively. Therefore, there exists a sequence of indices $\Lambda$ such that $(\mathbf{g}^{(n)})_{n \in \Lambda}$ converges componentwise to some vector function $\mathbf{g}$ uniformly on each compact subset of $\stackrel{\circ}{K}_1$ and $\stackrel{\circ}{K}_2$, respectively. The components of $\mathbf{g}$ are, therefore, analytic on $\stackrel{\circ}{K}_1$ and $\stackrel{\circ}{K}_2$, respectively. The uniform limit of holomorphic functions which never equal zero must either be identically equal to zero or never zero. The first case is not possible because
\[ \min_{  \mathbf{\Delta}} \mathbf{g} = \lim_{n \in \Lambda} \min_{  \mathbf{\Delta}} \mathbf{g}^{(n)} \geq C^{-1}.
\]
Also
\[ \|\mathbf{g}\|_{\mathbf{K}} = \lim_{n\in \Lambda} \|\mathbf{g}^{(n)}\|_{\mathbf{K}} \leq C.
\]
Consequently, $\mathbf{g} \in H^+(\mathbf{K},C)$. We conclude that $H^+(\mathbf{K},C)$ is compact.

Fix an arbitrary $\theta > 0$.   Let
		\[ \omega(\theta)= \{\mathbf{g} \in H^+(\mathbf{K}, C): \|\mathbf{g} -
\mathbf{\mathsf{G}}\|_{\mathbf{C}_\mathbf{\Delta}}\leq \theta\}.
		\]
	This is a closed subset of $H^+(\mathbf{K}, C)$ and, therefore, it is compact. Obviously, it is convex. Analogously, for
every $\varepsilon > 0$, set
		\[ \omega_\varepsilon := \{ \mathbf{g}\in H^+(\mathbf{K}, C) : d(\mathbf{g},\mathbf{\mathsf{G}}) \leq \varepsilon\}.
		\]
	There exists $\varepsilon_0$ such that
		\[ \omega_{\varepsilon} \subset \omega(\theta), \qquad 0 < \varepsilon \leq \varepsilon_0,
		\]
	for, otherwise, we could find a sequence of vector functions in $H^+(\mathbf{K}, C) \subset
\mathbf{C}_{\mathbf{\Delta}}^+$ which converges to $\mathbf{\mathsf{G}}$  in the $d$ metric but not in the
$\|\cdot\|_{\mathbf{C}_\mathbf{\Delta}}$ norm which would contradict \eqref{equiv}.

	Take
		\[ \omega_{\varepsilon,n} =  \omega_{\varepsilon}  \cap H^{+,n}.
		\]
	For each  fixed $n$ the set $\omega_{\varepsilon,n}$ is a closed, bounded subset of a finite dimensional space,
therefore it is compact.

	Let $\mu_k$ be the representing measure of $\mathsf{G}_k$ so that $\mathsf{G}_k(z) = \mathsf{G}(\mu_k, z), z \in
\Omega_k.$ From Proposition \ref{prop2}, using  \eqref{predet} and \eqref{equiv} it follows that
		\[ \lim_n d(\mathbf{g}^{(n)}, \mathbf{\mathsf{G}}) = 0,
		\]
	where
		\[ \mathbf{g}^{(n)} := \left( \frac{\mathsf{G}_1(\infty)\tilde{Q}_{n,1}}{\Phi_1^n},
\frac{\mathsf{G}_2(\infty)\tilde{Q}_{n,2}}{\Phi_2^n}\right)
		\]
	and $\tilde{Q}_{n,k}, k=1,2,$ is given by \eqref{poltilde}. Consequently, for every fixed $\varepsilon, 0 < \varepsilon \leq \varepsilon_0$, there exists $n_0$ such that $\omega_{\varepsilon,n} \neq \varnothing$ for all $n \geq n_0$. Using the structure of the elements of $H^{+,n}$, the definition of the metric $d$, and the monotonicity of the logarithm, it is easy to verify that $\omega_{\varepsilon,n}$ is convex.

	Let us show that $T_n(\omega_{\varepsilon,n}) \subset \omega_{\varepsilon,n}$ for all sufficiently large $n$. We claim that there exists $n_0$ such that for all $n \geq n_0 $ and $\mathbf{g} \in \omega_{\varepsilon, n}$, we have
	\begin{equation}
		\label{desig}
		d(T_n({\mathbf{g}}),T_{\widetilde{\mathbf{w}}}(\mathbf{g})) < \varepsilon/2.
	\end{equation}
	Should this not occur, we could find a sequence $(\mathbf{g}^{(n)})_{n \in \Lambda}, \mathbf{g}^{(n)} \in \omega_{\varepsilon,n},$ such that
	\begin{equation}
		\label{contra}
		d(T_n({\mathbf{g}}^{(n)}),T_{\widetilde{\mathbf{w}}}(\mathbf{g}^{(n)})) \geq \varepsilon/2
	\end{equation}
	The elements of $\omega_{\varepsilon,n}$ belong to $H^+(K,C)$; therefore, $(\mathbf{g}^{(n)})_{n \in \Lambda}$ is uniformly bounded in the $\|\cdot\|_{\mathbf{K}}$ norm.  Consequently, there exists $\mathbf{g} \in H^+(K,C)$ and a subsequence of indices $\Lambda' \subset \Lambda$ such that
		\[ \lim_{n\in \Lambda'} \|{\mathbf{g}}^{(n)} - \mathbf{g}\|_{\mathbf{K}} = 0.
		\]
	In particular,
		\[ \lim_{n \in \Lambda'} \|{\mathbf{g}}^{(n)} - \mathbf{g}\|_{\mathbf{C}_{\mathbf{\Delta}}} = 0.
		\]
	Then, according to \eqref{limfund*} in Proposition \ref{prop5}
		\[ \lim_{n \in \Lambda'} \|T_n({\mathbf{g}}^{(n)}) - T_{\widetilde{\mathbf{w}}}(\mathbf{g})\|_{\mathbf{C}_{\mathbf{\Delta}}} = 0
		\]
	which contradicts \eqref{contra} due to \eqref{equiv}.

	Using the triangle inequality and \eqref{desig} for every $n \geq n_0$ and $\mathbf{g} \in \omega_{\varepsilon,n}$
		\[ d(\mathsf{G}, T_n(\mathbf{g})) \leq d(T_{\widetilde{\mathbf{w}}}(\mathsf{G}), T_{\widetilde{\mathbf{w}}}(\mathbf{g})) + d(T_{\widetilde{\mathbf{w}}}(\mathsf{g}), T_n(\mathbf{g})) < \frac{1}{2}d(\mathsf{G}, \mathbf{g}) + \frac{1}{2}\varepsilon \leq \varepsilon.
		\]
	Consequently, $T_n(\omega_{\varepsilon,n}) \subset \omega_{\varepsilon,n}$ as claimed. Now, using Brouwer's fixed point Theorem we obtain  that for all $n \geq n_0$ the operator $T_n$ has a fixed point in $\omega_{\varepsilon,n}$.

	Since $\theta > 0$ is arbitrary, we have shown that \eqref{fixed} is true and we conclude the proof.
\end{proof}
To apply Theorem \ref{main} to the case of ML Hermite-Pad\'e polynomials we need to  select  $\tilde{h}_n = |h_{n,1}|$, $\tilde{h} = h$ (see \eqref{Qs} and \eqref{int12}), and
\begin{equation}
	\label{oper_TQ}
	\tilde{\mathbf{w}} = \mathbf{w}_Q := (\sqrt{(b_1-x)(x-a_1)}(h\sigma_1')(x), \; \sqrt{(b_2-x)(x-a_2)}\sigma_2'(x)),
\end{equation}
to determine the operator $T_{\mathbf{w}_Q}$ (see Def. \ref{oper_T}).

\begin{theorem} \label{ML} Let $(\sigma_1,\sigma_2) \in \mathcal{S}(\mathbf{\Delta}) $. Let
$(Q_{n,1},Q_{n,2})_{n\geq0}$ be the sequence of ML Hermite-Pad\'e polynomials defined by  \eqref{int3}-\eqref{hn1} and let $\kappa_{n,1},\kappa_{n,2}, q_{n,1}, q_{n,2},$ and $h_{n,1}$ be defined as in \eqref{kappa}-\eqref{Qs}. Finally, let $\mathsf{G} = (\mathsf{G}_1,\mathsf{G}_2)$ be the fixed point of the operator $T_{\mathbf{w}_Q}$, as in Definition \ref{oper_T} and \eqref{oper_TQ}. Then
\begin{equation*}
 	%\label{limfund2}
 	\lim_n \frac{Q_{n,k}(z)}{\Phi_k^{n}(z)} =  \frac{\mathsf{G}_k(z)}{ \mathsf{G}_k(\infty)} , \qquad k=1,2,
 	\end{equation*}
 uniformly on compact subsets of $\Omega_k$.   Additionally,
 \begin{equation} \label{conductor*}
 \lim_n \frac{{\kappa}_{n,k}}{C_k^n} = \frac{1}{\sqrt{2\pi}}\mathsf{G}_k(\infty),
 \end{equation}
 and
 \begin{equation}
 	\label{limfund**}
 	\lim_n \frac{q_{n,k}(z)}{C_k^{n}\Phi_k^{n}(z)} = \frac{1}{\sqrt{2\pi}}\mathsf{G}_k(z), \qquad k=1,2,
 	\end{equation}
 uniformly on compact subsets of $\Omega_k$.
\end{theorem}

\begin{proof}
	It is sufficient to apply Theorem \ref{main} with $\tilde{h}_n = |h_{n,1}|$, taking note that \eqref{int12} takes place and
that according to Proposition \ref{prop1} the operator $\tilde{T}_n$ has a unique fixed point in all of
$\mathcal{P}_{n,1}\times\mathcal{P}_{n,2}$ which coincides with $(Q_{n,1},Q_{n,2})$.
\end{proof}

Our method differs from Aptekarev's in two aspects. Proposition \ref{prop2}, which plays a key role, is derived using arguments from complex function theory. The corresponding result in \cite[Theorem 2]{sasha} uses a quite intricate approximative construction on a Riemann surface. Secondly, in \cite{sasha},  Widom's approach  introduced in \cite{widom} is followed closely to obtain $L_2$ estimates, on segments of the real line, of the asymptotic behavior of the multiple orthogonal polynomials. Thus, the results are obtained for measures in the Szeg\H{o} class which are absolutely continuous with respect to the Lebesgue measure. We use instead the results obtained in Section 2 on orthogonal polynomials with respect to varying measures and do not need to restrict to absolutely continuous measures. In consequence, we only give the asymptotic in the complement of the intervals.

\subsection{Proof of Theorem \ref{mainbiort}} Recall that the biorthogonal polynomial $Q_n$ coincides
with $Q_{n,2}$. Consequently, the second relation in \eqref{asinbio} follows directly from \eqref{limfund*}.

To obtain the asymptotic of the biorthogonal polynomials $P_n$ we need a result similar to Theorem \ref{ML} working
with the definition \eqref{JLS1*}-\eqref{JLS2*} corresponding to the Nikishin system $\mathcal{N}(\sigma_2,\sigma_1)$.
We outline the main ingredients.

From  Lemma \ref{l2} and Proposition \ref{prop1} it follows that there exist a unique pair $(P_{n,1}, P_{n,2})$ of monic
polynomials of degree $n$, where $P_{n,2} = b_{n,2} = P_n$, such that
\begin{align}
	\int x^\nu P_{n,2}(x)\frac{\D\sigma_1(x)}{P_{n,1}(x)}=0,\qquad \nu=0,1,\ldots,n-1, \label{orth_P2}\\
	\int x^\nu P_{n,1}(x)\frac{\mathcal{L}_{n,1}(x)\D\sigma_2(x)}{P_{n,2}(x)}=0,\qquad \nu=0,1,\ldots,n-1,\label{orth_P1}
\end{align}	
where
\begin{equation*}
	\label{J_n1}
	\mathcal{L}_{n,1}(z):=\frac{\mathcal{B}_{n,1}(z)P_{n,2}(z)}{P_{n,1}(z)} = P_{n,2}(z)\int\frac{P_{n,2}(x)}{z-x}
\frac{\D\sigma_1(x)}{P_{n,1}(x)} =   \int\frac{P_{n,2}^2(x)}{z-x}\frac{\D\sigma_1(x)}{P_{n,1}(x)}.
\end{equation*}	
The normalization in this case is
\begin{equation*}
	\xi_{n,2}^{-2} = \int P_{n,2}^2(x)\frac{\D\sigma_1(x)}{|P_{n,1}(x)|}, \qquad
    (\xi_{n,1}\xi_{n,2})^{-2} = \int P_{n,1}^2(x)\frac{|\mathcal{L}_{n,1}(x)|\D\sigma_2(x)}{|P_{n,2}(x)|}.
\end{equation*}
Take,
\begin{equation*}
	\label{norm_J}
	p_{n,1}=\xi_{n,1}P_{n,1},\qquad p_{n,2}=\xi_{n,2}P_{n,2},\qquad\ell_{n,1}=\xi_{n,2}^2\mathcal{L}_{n,1}.
\end{equation*}	
Then, the orthogonality relation \eqref{orth_P2} and \eqref{orth_P1} can be restated as
\begin{align*}
	\int x^\nu p_{n,2}(x)\frac{\D\sigma_1(x)}{|P_{n,1}(x)|}=0,\qquad \nu=0,1,\ldots,n-1, \\
	\int x^\nu p_{n,1}(x)\frac{|\ell_{n,1}(x)|\D\sigma_2(x)}{|P_{n,2}(x)|}=0,\qquad \nu=0,1,\ldots,n-1,
\end{align*}
and the polynomials $p_{n,2}$ and $p_{n,1}$ are orthonormal with respect to the corresponding varying measures.
Following the same arguments that led us to \eqref{int12}, we obtain
\begin{equation} \label{ell}
	\lim_n |\ell_{n,1}(t)| = \frac{1}{\sqrt{|t-a_1||t-b_1|}} =: \ell(t)
\end{equation}
uniformly for $t\in\Delta_2$.

The operator $T_{\mathbf{w}_P}: \mathbf{C}^+_{\mathbf{\Delta}}\longrightarrow  \mathbf{C}^+_{\mathbf{\Delta}}$ which is relevant to describe the strong asymptotic of the polynomials $P_{n,2},P_{n,1}$ and their orthonormal versions $p_{n,2},p_{n,1}$ is the one determined by
\begin{equation}
	\label{oper_TP}
	\mathbf{w}_P := (\sqrt{(b_1-x)(x-a_1)}\sigma_1'(x), \; \sqrt{(b_2-x)(x-a_2)}(\ell\sigma_2')(x)).
\end{equation}	
In other words,  $T_{\mathbf{w}_P}(g_1,g_2) = (g_1^*,g_2^*)$ is the pair of Szeg\H{o}
functions, $g_k^*\in \mathcal{H}(\Omega_k), k=1,2,$ verifying
\begin{equation}\label{boundeq3*}
|g_1^*(x)|^2 = \frac{g_2(x)}{\sqrt{(b_1 - x)(x-a_1)} \sigma_1'(x)}, \quad
\mbox{a.e. on} \quad [a_1,b_1] = \Delta_1,
\end{equation}
and
\begin{equation}\label{boundeq4*}
|g_2^*(x)|^2 = \frac{g_1(x)}{\sqrt{(b_2 - x)(x-a_2)} (\ell\sigma_2')(x)}, \quad
\mbox{a.e. on} \quad [a_2,b_2] = \Delta_2,
\end{equation}
where $\ell$ is given in \eqref{ell}.

Following the same reasonings as before, if $(\mathsf{G}^*_1,\mathsf{G}^*_2)$ is the fixed point of the operator
$T_{\mathbf{w}_P}$ defined through \eqref{boundeq3*}-\eqref{boundeq4*}, we have
	\begin{equation}
		\label{lim_P*}
		\lim_n \frac{P_{n,1}(z)}{\Phi_{2}^n(z)} = \frac{\mathsf{G}^*_2(z)}{\mathsf{G}^*_2(\infty)},\qquad  \lim_n
\frac{P_{n,2}(z)}{\Phi_{1}^n(z)} = \frac{\mathsf{G}^*_1(z)}{\mathsf{G}^*_1(\infty)}
	\end{equation}
	uniformly on compact subsets of $\Omega_2$ and $\Omega_1$, respectively. Since $P_{n} = P_{n,2}$, the second
limit in \eqref{lim_P*} gives us the strong asymptotic of the sequence $(P_n)_{n \geq 0}$, We are done. \hfill $\Box$

Naturally, the asymptotic of the sequence of normalizing coefficients and of the orthonormal polynomials $p_{n,1},
p_{n,2}$ can also be given. We leave the details to the reader.

\subsection{Asymptotic of ML Hermite-Pad\'e polynomials}

In this subsection, as an easy consequence of the strong asymptotic of the polynomials $Q_{n,1}$ and $Q_{n,2}$, we
obtain the strong asymptotic  of $\mathcal{A}_{n,j}$, and $a_{n,j}$, $j=0,1$. Recall that $Q_{n,2}\equiv a_{n,2} \equiv
\mathcal{A}_{n,2}$.

Let $f$ be a function which has a constant sign on some interval $\Delta \subset \R$. We define
\[ \mbox{sg}_\Delta (f) :=
\left\{
\begin{array}{rr}
1, & f >0\,\,\, \mbox{on}\,\,\, \Delta, \\
-1, & f < 0\,\,\, \mbox{on}\,\,\, \Delta.
\end{array}
\right.
\]

\begin{corollary}
	\label{asym_forms}
	Let $(\sigma_1,\sigma_2) \in \mathcal{S}(\mathbf{\Delta})$  and $\mathcal{A}_{n,j}, j=0,1,$ is defined by
\eqref{JLS1}--\eqref{JLS2}. Then,
	\begin{equation}
		\label{asym_An1}
		\lim_n  \mbox{\rm sg}_{\Delta_2} (Q_{n,1}) \frac{\kappa_{n,2}^2\mathcal{A}_{n,1}(z)}{\left(\Phi_1/\Phi_2\right)^n(z)}
= \frac{\mathsf{G}_2(\infty)}{\mathsf{G}_1(\infty)} \frac{\mathsf{G}_1(z)}{\mathsf{G}_2(z)}  \frac{1}{\sqrt{(z-b_2)(z-a_2)}},
	\end{equation}
	and
	\begin{equation}
		\label{asym_An0}
		\lim_n \mbox{\rm sg}_{\Delta_1} \left(\frac{h_{n,1}}{Q_{n,2}}\right)
\frac{(\kappa_{n,1}\kappa_{n,2})^2\mathcal{A}_{n,0}(z)}{\Phi_1^{-n}(z)} = \frac{\mathsf{G}_1(z)}{\mathsf{G}_1(\infty)}
\frac{1}{\sqrt{(z-a_1)(z-b_1)}},
	\end{equation}	
	where the limits are uniform on compact subsets of $\overline{\C}\setminus(\Delta_1\cup\Delta_2)$ and
$\overline{\C}\setminus\Delta_1$, respectively.
\end{corollary}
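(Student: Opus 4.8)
The plan is to reduce both limits to the strong asymptotics of $Q_{n,1},Q_{n,2}$ already furnished by Theorem \ref{ML}, combined with the convergence of the Cauchy transforms of the orthonormal squares supplied by \cite[Theorem 8]{bernardo_lago1} (the same results used in \eqref{int9}--\eqref{int12}). Since $\Delta_1,\Delta_2$ are disjoint, each of $Q_{n,1},Q_{n,2},\mathcal{H}_{n,1}$ keeps a constant sign on the interval not carrying its zeros; hence every sign that appears below is irrelevant once we pass to absolute values, and I will suppress them.

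For \eqref{asym_An1} I would start from \eqref{hn1}, which after the normalization \eqref{Qs} reads $\kappa_{n,2}^2\mathcal{A}_{n,1}(z)=h_{n,1}(z)\,Q_{n,1}(z)/Q_{n,2}(z)$ with $h_{n,1}=\kappa_{n,2}^2\mathcal{H}_{n,1}$. The ratio $Q_{n,1}/Q_{n,2}$ is handled directly by \eqref{limfund*}: division by $(\Phi_1/\Phi_2)^n$ yields $\tfrac{\mathsf{G}_2(\infty)}{\mathsf{G}_1(\infty)}\tfrac{\mathsf{G}_1(z)}{\mathsf{G}_2(z)}$. For $h_{n,1}(z)$ I use that on $\Delta_2$ the polynomial $Q_{n,1}$ has fixed sign, so $h_{n,1}(z)$ equals, up to a fixed sign, $\int \frac{q_{n,2}^2(x)}{z-x}\frac{d\sigma_2(x)}{|Q_{n,1}(x)|}$; by \cite[Theorem 8]{bernardo_lago1} (precisely the convergence quoted in the rational-approximation subsection, with $\mu=\sigma_2$ and varying denominator $Q_{n,1}$) this tends to $1/\sqrt{(z-b_2)(z-a_2)}$, uniformly on compact subsets of $\overline{\C}\setminus\Delta_2$. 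Multiplying the two limits and taking moduli gives \eqref{asym_An1}, uniformly on compact subsets of $\overline{\C}\setminus(\Delta_1\cup\Delta_2)$, the common domain of analyticity of the two factors.

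For \eqref{asym_An0} the key is a second-kind representation. From \eqref{int1} with $j=0$, $Q_{n,0}\equiv1$, one has $\mathcal{A}_{n,0}(z)=\int\frac{\mathcal{A}_{n,1}(x)}{z-x}\,d\sigma_1(x)$, and since $\mathcal{A}_{n,1}\,d\sigma_1=Q_{n,1}\,d\rho_n$ with $d\rho_n:=\frac{\mathcal{H}_{n,1}\,d\sigma_1}{Q_{n,2}}$ (by \eqref{hn1}), this is $\int\frac{Q_{n,1}(x)}{z-x}\,d\rho_n(x)$. Now \eqref{int4} says exactly that $Q_{n,1}$ is orthogonal to all polynomials of degree at most $n-1$ with respect to $d\rho_n$. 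Multiplying by $Q_{n,1}(z)$ and writing $Q_{n,1}(z)Q_{n,1}(x)=Q_{n,1}(x)[Q_{n,1}(z)-Q_{n,1}(x)]+Q_{n,1}^2(x)$, the first summand is $Q_{n,1}(x)$ times a polynomial of degree $\le n-1$ in $x$ and so integrates to zero; hence
\[
Q_{n,1}(z)\,\mathcal{A}_{n,0}(z)=\int\frac{Q_{n,1}^2(x)}{z-x}\,d\rho_n(x).
\]
Inserting the normalizations from \eqref{kappa}--\eqref{Qs} ($q_{n,1}=\kappa_{n,1}Q_{n,1}$, $h_{n,1}=\kappa_{n,2}^2\mathcal{H}_{n,1}$) this becomes
\[
(\kappa_{n,1}\kappa_{n,2})^2\,\mathcal{A}_{n,0}(z)=\frac{1}{Q_{n,1}(z)}\int\frac{q_{n,1}^2(x)\,h_{n,1}(x)\,d\sigma_1(x)}{(z-x)\,Q_{n,2}(x)}.
\]
By \eqref{limfund*} the prefactor satisfies $\Phi_1^n(z)/Q_{n,1}(z)\to \mathsf{G}_1(\infty)/\mathsf{G}_1(z)$, while the integral equals, up to a fixed sign, $\int\frac{q_{n,1}^2(x)}{z-x}\frac{|h_{n,1}(x)|\,d\sigma_1(x)}{|Q_{n,2}(x)|}$, whose limit is governed by the weak-$*$ convergence behind \eqref{int10}--\eqref{int12}: with test function $(z-x)^{-1}$ it yields $\frac{1}{\pi}\int_{a_1}^{b_1}\frac{(z-x)^{-1}\,dx}{\sqrt{(b_1-x)(x-a_1)|x-b_2||x-a_2|}}$. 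Combining these and taking moduli gives \eqref{asym_An0}, uniformly on compact subsets of $\overline{\C}\setminus\Delta_1$; note that the Szeg\H{o} factor there issues from the prefactor $1/Q_{n,1}(z)$, i.e.\ it enters as $\mathsf{G}_1(\infty)/\mathsf{G}_1(z)$.

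The main obstacle is upgrading the convergence of these two families of Cauchy transforms from pointwise in $z$ to uniform on compact subsets of the respective exterior domains. For $h_{n,1}$ this is already built into the cited form of \cite[Theorem 8]{bernardo_lago1}. For the $\mathcal{A}_{n,0}$ integral, \eqref{int10} gives, for each fixed bounded continuous $g_1$, convergence of the scalars $\int g_1\,q_{n,1}^2|h_{n,1}|\,d\sigma_1/|Q_{n,2}|$; to pass to uniform convergence in $z$ of $\int(z-x)^{-1}\,d\nu_n(x)$, where $d\nu_n:=q_{n,1}^2|h_{n,1}|\,d\sigma_1/|Q_{n,2}|$, I would use that the $d\nu_n$ are probability measures (by \eqref{int8}) converging weak-$*$ to the limiting measure, so that a standard normal-families/Vitali argument on $\overline{\C}\setminus\Delta_1$ supplies the required uniformity. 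A secondary point to verify is that $|h_{n,1}|\to h$ uniformly on $\Delta_1$ (this is \eqref{int12}), so that the weight of $d\nu_n$ genuinely converges and $(z-x)^{-1}$ is a legitimate test function for each fixed $z\notin\Delta_1$.
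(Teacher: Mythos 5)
Your proposal reproduces the paper's own proof essentially step for step: for \eqref{asym_An1} you use the same factorization $\kappa_{n,2}^2\mathcal{A}_{n,1}=h_{n,1}\,Q_{n,1}/Q_{n,2}$ coming from \eqref{hn1} (the paper's \eqref{asym_An1_1}), and for \eqref{asym_An0} the same chain: \eqref{int1} with $j=0$, substitution via \eqref{hn1}, the orthogonality trick from \eqref{int4} to replace $Q_{n,1}$ by $Q_{n,1}^2$ under the integral (the paper's \eqref{asym_An0_1}), and then Theorem \ref{ML} combined with \cite[Theorem 8]{bernardo_lago1} and \eqref{int12}. The extra normal-families/Vitali justification you give for uniformity of the Cauchy transforms is not needed as a separate argument (the cited theorem already delivers uniform convergence on compact subsets), but it is harmless.

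One point in your write-up deserves emphasis rather than correction. You note that in \eqref{asym_An0} the Szeg\H{o} factor ``issues from the prefactor $1/Q_{n,1}(z)$, i.e.\ it enters as $\mathsf{G}_1(\infty)/\mathsf{G}_1(z)$.'' That is exactly what the paper's own computation produces: by \eqref{limfund*} one has $\Phi_1^n(z)/Q_{n,1}(z)\to \mathsf{G}_1(\infty)/\mathsf{G}_1(z)$, so both your derivation and the paper's proof yield the limit
\begin{equation*}
\frac{\mathsf{G}_1(\infty)}{\pi\,|\mathsf{G}_1(z)|}\left|\int_{a_1}^{b_1}\frac{(z-x)^{-1}\D x}{\sqrt{(b_1-x)(x-a_1)|x-b_2||x-a_2|}}\right|,
\end{equation*}
whereas the displayed right-hand side of \eqref{asym_An0} carries the inverted factor $|\mathsf{G}_1(z)|/(\pi\,\mathsf{G}_1(\infty))$. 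This is an inconsistency in the statement of the corollary as printed (compare with \eqref{asym_An1}, where statement and proof do agree); your conclusion matches the proof, not the printed formula, and is the correct one.
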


\begin{proof}
	 Formula \eqref{hn1} can be rewritten as
	\begin{equation*}
	\mathcal{A}_{n,1}(z) =    \frac{Q_{n,1}(z)}{Q_{n,2}(z)}\int\frac{Q_{n,2}^2(x)}{z-x}\frac{\D\sigma_2(x)}{Q_{n,1}(x)} ,
	\end{equation*}
	where the equality holds in  $\Omega_2$. Then,
	\begin{equation}
		\label{asym_An1_1}
		\mbox{\rm sg}_{\Delta_2} (Q_{n,1}) \frac{\kappa_{n,2}^2\mathcal{A}_{n,1}(z)}{\left(\Phi_1/\Phi_2\right)^n(z)} =
\frac{Q_{n,1}(z)}{\Phi_1^n(z)}\frac{\Phi_2^n(z)}{Q_{n,2}(z)}\int\frac{q_{n,2}^2(x)}{z-x}\frac{\D\sigma_2(x)}{|Q_{n,1}(x)|},
\quad z\in\overline{\C}\setminus(\Delta_1\cup\Delta_2).
	\end{equation}
From  \eqref{int9} with $g_2(x) = (z-x)^{-1}$, we have
	\begin{equation}
		\label{asym_An1_3}
		\lim_n  \int\frac{q_{n,2}^2(x)}{z-x}\frac{\D\sigma_2(x)}{|Q_{n,1}(x)|}  =  \frac{1}{\sqrt{(z-b_2)(z-a_2)}},
	\end{equation}
	uniformly on compact subsets of $\Omega_2$. This, together with \eqref{limfund*}	 and \eqref{asym_An1_1}, gives
us
\eqref{asym_An1}.

Combining \eqref{int1} for $j=0$ with  \eqref{hn1} we get
	\begin{equation*}
		\mathcal{A}_{n,0}(z) = \int \frac{Q_{n,1}(x)}{z-x}\frac{\mathcal{H}_{n,1}(x)\D\sigma_1(x)}{Q_{n,2}(x)}.
	\end{equation*}
 By orthogonality, we have
	\begin{equation*}
		Q_{n,1}(z)\int \frac{Q_{n,1}(x)}{z-x}\frac{\mathcal{H}_{n,1}(x)\D\sigma_1(x)}{Q_{n,2}(x)} = \int
\frac{Q_{n,1}^2(x)}{z-x}\frac{\mathcal{H}_{n,1}(x)\D\sigma_1(x)}{Q_{n,2}(x)}.
	\end{equation*}	
Therefore,
	\begin{equation*}
		\mathcal{A}_{n,0}(z) = \frac{1}{Q_{n,1}(z)}\int
\frac{Q_{n,1}^2(x)}{z-x}\frac{\mathcal{H}_{n,1}(x)\D\sigma_1(x)}{Q_{n,2}(x)},
	\end{equation*}
	where the equality holds for $z\in\Omega_1$. So,
	\begin{equation}
		\label{asym_An0_1}
		\mbox{\rm sg}_{\Delta_1} \left(\frac{h_{n,1}}{Q_{n,2}}\right)
\frac{(\kappa_{n,1}\kappa_{n,2})^2\mathcal{A}_{n,0}(z)}{\Phi_1^{-n}(z)} = \frac{\Phi_1^n(z)}{Q_{n,1}(z)}\int
\frac{q_{n,1}^2(x)}{z-x}\frac{|h_{n,1}(x)|\D\sigma_1(x)}{|Q_{n,2}(x)|}.
	\end{equation}
From   \eqref{int10} with $g_1(x) = (z-x)^{-1}$, it follows that
	\begin{equation*}
		\label{asym_An0_3}
		\lim_n \int \frac{q_{n,1}^2(x)}{z-x}\frac{|h_{n,1}(x)|\D\sigma_1(x)}{|Q_{n,2}(x)|} = \frac{1}{\sqrt{(z-a_1)(z-b_1)}},
	\end{equation*}
uniformly on compact subsets of $\Omega_1$.
This formula combined with \eqref{asym_An0_1} and \eqref{limfund*} gives us  \eqref{asym_An0}. We have completed
the proof.
\end{proof}

Regarding the sign functions in \eqref{asym_An1} and \eqref{asym_An0} it is easy to deduce the following (see
\eqref{hn1})
\[ \mbox{\rm sg}_{\Delta_2} \left( Q_{n,1}\right)  =
\left\{
\begin{array} {rrr}
1, & n\,\, \mbox{is even}, & \mbox{}\\
1, & n\,\, \mbox{is odd}, & b_1 < a_2,\\
-1, & n\,\, \mbox{is odd}, & b_2 < a_1,
\end{array}
\right.
\quad
\mbox{\rm sg}_{\Delta_1} \left(\frac{h_{n,1}}{Q_{n,2}}\right)  =
\left\{
\begin{array} {rrr}
-1, & n\,\, \mbox{is even}, & b_1 < a_2,\\
1, & n\,\, \mbox{is even}, & b_2 < a_1,\\
1, & n\,\, \mbox{is odd}, & b_1 < a_2,\\
-1, & n\,\, \mbox{is odd}, & b_2 < a_1.
\end{array}
\right.
\]

Notice that
\[ \widehat{\sigma}_2(z) - \frac{a_{n,1}(z)}{a_{n,2}(z)} = \frac{\mathcal{A}_{n,1}(z)}{Q_{n,2}(z)} =
\frac{Q_{n,1}(z)}{q^2_{n,2}(z)}\int\frac{q_{n,2}^2(x)}{z-x}\frac{\D\sigma_2(x)}{Q_{n,1}(x)}.
\]
Therefore, using \eqref{limfund*}, \eqref{limfund**}, and \eqref{asym_An1_3}, we obtain
\begin{equation} \label{cero} \lim_n \left| \frac{C_2^2 \Phi_2^2(z)}{\Phi_1(z)}\right|^n\left| \widehat{\sigma}_2(z) -
\frac{a_{n,1}(z)}{a_{n,2}(z)} \right|
= \frac{2\pi |\mathsf{G}_1(z)|}{\mathsf{G}_1(\infty)|\mathsf{G}^2_2(z)|\sqrt{|z-a_2||z-b_2|}},
\end{equation}
uniformly on compact subsets of $\overline{\C} \setminus (\Delta_1 \cup \Delta_2)$.
The definition of $\Phi_1,\Phi_2$, and $C_2$ imply
\[  \left| \frac{C_2^2 \Phi_2^2(z)}{\Phi_1(z)}\right| = \exp\left(-(2V_{\lambda_2}(z) - V_{\lambda_1}(z) - 2\gamma_2)\right).
\]
The second equilibrium equation in \eqref{vector_equil} and the maximum principle for subharmonic function entail
\[ 2V_{\lambda_2}(z) - V_{\lambda_1}(z) - 2\gamma_2 < 0, \qquad z \in \Omega_2.
\]
Consequently, \eqref{cero} gives a precise description of the rate with which $({a_{n,1}}/{a_{n,2}})_{n\geq 0}$ converges
to $\widehat{\sigma}_2$. Exactly the same formula can be obtained substituting in \eqref{cero} $\widehat{\sigma}_2 -
a_{n,1}/a_{n,2}$ by $\widehat{s}_{2,1} - a_{n,0}/a_{n,2}$.  However, we will not dwell into this because it requires the
introduction of   new transformations which drive us off track.

From \cite[Th. 1.6]{LMS}, we know that, for $j=0,1$
\begin{equation*}
	\lim_n \frac{a_{n,j}(z)}{a_{n,2}(z)}=\hat{s}_{2,j+1}(z),
\end{equation*}	
where the limit is uniform on compact subsets of $\Omega_2$.

\begin{corollary}
	\label{asym_pol_anj}
	Let $(\sigma_1,\sigma_2) \in \mathcal{S}(\mathbf{\Delta})$. Then, for $j=0,1$
	\begin{equation*}
		\label{lim_pol_anj}
		\lim_n \frac{a_{n,j}(z)}{\Phi_2^n(z)}=\frac{\mathsf{G}_2(z)}{\mathsf{G}_2(\infty)}\hat{s}_{2,j+1}(z),
	\end{equation*}
	where the limit is uniform on compact subsets of $\Omega_2$.
\end{corollary}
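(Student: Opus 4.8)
The plan is to write the ratio as a product of two sequences whose limits are already in hand and then pass to the limit using uniform convergence on compact sets. Concretely, for $z \in \Omega_2$ I would factor
\[
\frac{a_{n,j}(z)}{\Phi_2^n(z)} = \frac{a_{n,j}(z)}{a_{n,2}(z)}\cdot\frac{a_{n,2}(z)}{\Phi_2^n(z)}.
\]
This factorization is legitimate on all of $\Omega_2$ because $a_{n,2}\equiv Q_{n,2}$ is monic of degree $n$ with all its zeros in $\stackrel{\circ}{\Delta}_2$, so it does not vanish on $\Omega_2$ and both factors are holomorphic there.

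First I would handle the second factor. Since $a_{n,2}\equiv Q_{n,2}$, the case $k=2$ of \eqref{limfund*} gives
\[
\lim_n \frac{a_{n,2}(z)}{\Phi_2^n(z)} = \frac{\mathsf{G}_2(z)}{\mathsf{G}_2(\infty)},
\]
uniformly on compact subsets of $\Omega_2$, the limit being holomorphic and bounded on each such compact set. For the first factor I would simply invoke the limit recorded just above the statement, namely \cite[Th.\ 1.6]{LMS}, which yields
\[
\lim_n \frac{a_{n,j}(z)}{a_{n,2}(z)} = \hat{s}_{2,j+1}(z), \qquad j=0,1,
\]
uniformly on compact subsets of $\Omega_2$. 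The limiting functions $\hat{s}_{2,1}$ and $\hat{s}_{2,2}=\widehat{\sigma}_2$ are Cauchy transforms of measures supported on $\Delta_2$, hence holomorphic and bounded on every compact subset of $\Omega_2$.

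Finally, on each compact $K\subset\Omega_2$ both factors converge uniformly to bounded limits, so both sequences are uniformly bounded on $K$ for all large $n$; therefore their product converges uniformly on $K$ to the product of the limits, giving
\[
\lim_n \frac{a_{n,j}(z)}{\Phi_2^n(z)} = \frac{\mathsf{G}_2(z)}{\mathsf{G}_2(\infty)}\,\hat{s}_{2,j+1}(z),
\]
as desired; the argument is identical for $j=0$ and $j=1$, differing only in which Cauchy transform appears. There is no substantive obstacle here: the whole content is the factorization together with the two previously established asymptotics. The only point meriting a word of care is that uniform convergence of a product follows from uniform convergence of the factors precisely because both limits are bounded on compacts of $\Omega_2$, which guarantees the local uniform boundedness needed to control the cross terms.
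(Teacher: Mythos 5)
Your proof is correct and follows essentially the same route as the paper: both factor $a_{n,j}/\Phi_2^n$ as $(a_{n,j}/a_{n,2})\cdot(a_{n,2}/\Phi_2^n)$, apply \cite[Th.\ 1.6]{LMS} to the first factor and the strong asymptotic of $Q_{n,2}$ from Theorem \ref{ML} to the second. Your write-up is merely more explicit about the non-vanishing of $a_{n,2}$ on $\Omega_2$ and the local uniform boundedness needed to multiply the two uniform limits, details the paper leaves implicit.
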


\begin{proof}
	Since $a_{n,2}\equiv Q_{n,2}$, we have
	\begin{equation*}
		\label{pol_anj_1}
		\lim_n \frac{a_{n,j}(z)}{\Phi_2^n(z)}\frac{\Phi_2^n(z)}{Q_{n,2}(z)}=\hat{s}_{2,j+1}(z).
	\end{equation*}	
	Taking into account \eqref{limfund**} the proof readily follows.
\end{proof}	

Results analogous to Corollaries \ref{asym_forms} and \ref{asym_pol_anj} for the forms $\mathcal{B}_{n,0},
\mathcal{B}_{n,1},$ and the polynomials $b_{n,0}, b_{n,1},$ follow immediately considering the Nikishin system
$\mathcal{N}(\sigma_2,\sigma_1)$. The details are left to the reader.

\subsection{A different expression for the  functions $\Phi_1, \Phi_2$ and the constants $C_1,C_2$}

In \cite[Theorem 4.2]{ULS} the ratio asymptotic of general ML Hermite-Pad\'e polynomials was given. The limit was expressed in terms of the branches of a conformal map of a certain Riemann surface. Since strong asymptotic implies ratio asymptotic, we can use that result to  interpret the comparison functions $\Phi_1,  \Phi_2$ and the constants $C_1,C_2$ in a different way. (Which coincides with the form in which they were defined in \cite{sasha}.)

We introduce the Riemann surface which is relevant in our case of two measures. Let $\mathcal{R}$ denote the compact Riemann surface
\[
\mathcal{R}=\overline{\bigcup_{k=0}^{2}\mathcal{R}_{k}}
\]
formed by $3$ consecutively ``glued" copies of the extended complex plane
\[
\mathcal{R}_{0}:=\overline{\mathbb{C}}\setminus\Delta_{1},\qquad
\mathcal{R}_{1}:=\overline{\mathbb{C}}\setminus(\Delta_{1}\cup\Delta_{2}),\quad
\mathcal{R}_{2}:=\overline{\mathbb{C}}\setminus\Delta_{2}.
\]
The upper and lower banks of the slits of two neighboring sheets are identified.

Let $\pi: \mathcal{R} \longrightarrow \overline{\mathbb{C}}$ be the canonical projection from $\mathcal{R}$ to $\overline{\mathbb{C}}$ and denote by $z^{(k)}$ the point on  $\mathcal{R}_k$ verifying $\pi(z^{(k)}) = z, z \in \overline{\mathbb{C}}$. Let $\phi :\mathcal{R}\longrightarrow\overline{\mathbb{C}}$ denote a conformal mapping whose divisor consists of one simple zero at $\infty^{(0)}\in\mathcal{R}_{0}$ and one simple pole at $\infty^{(2)}\in\mathcal{R}_{2}$. This mapping exists and is uniquely determined up to a multiplicative constant. Denote the branches of $\phi$ by
\[ \phi_k (z) := \phi (z^{(k)}), \qquad k= 0,1,2, \qquad z^{(k)} \in \mathcal{R}_k.\]
From the properties of $\phi $, we have
\begin{equation}\label{divisorcond}
\phi_0 (z)=c_{1}/z+\mathcal{O}(1/z^{2}),\qquad \phi_2(z)=c_{2}\,z+\mathcal{O}(1),\qquad z\rightarrow\infty,
\end{equation}
where $c_{1}$, $c_{2}$ are non-zero constants.

Let $x \in \Delta_k, k=1,2$. We write $z \to x_+$ when $z \in \mathbb{C}$ approaches $x$ from above the real line. Analogously, $z \to x_- $ means that $z$ approaches $x$ from below the real line. Let us define
\[ \phi_k (x_+) := \lim_{z \to x_+} \phi_k (z) = \lim_{z \to x_+} \phi (z^{(k)})
\]
and
\[ \phi_k (x_-) := \lim_{z \to x_-} \phi_k (z) = \lim_{z \to x_-} \phi (z^{(k)}).
\]
Except when $x$ is an end point of $\Delta_k$, these limits are different due to the fact that $\lim_{z\to x_+} z^{(k)} \neq \lim_{z\to x_-} z^{(k)}$ on $\mathcal{R}$. However, due to the identification made of the points on the slits it is easy to verify that
\begin{equation}\label{identif}
\phi_k (x_+) = \phi_{k+1} (x_-), \qquad \phi_k (x_-) = \phi_{k+1} (x_+), \qquad k=0,1,
\end{equation}
because
\[ \lim_{z \to x_+} z^{(k)} = \lim_{z \to x_-} z^{(k+1)}, \qquad \lim_{z \to x_-} z^{(k)} = \lim_{z \to x_+} z^{(k+1)}.
\]

Taking account of the way in which the functions $\phi_k $ were extended to $\Delta_k$ and \eqref{identif} it follows that $ \prod_{k=0}^{2}\phi_{k} $ is a single-valued analytic function on $\overline{\mathbb{C}}$ without singularities; therefore, it is constant. We normalize $\phi $ so that
\[ \prod_{k=0}^{2}\phi_{k} = c, \qquad |c| = 1, \qquad c_{1} > 0.
\]
Let us show that with this normalization $c$ is  $+1$.

Indeed, for a point $z^{(k)} \in \mathcal{R}_k$ on the Riemann surface we define its conjugate $\overline{z^{(k)}} := \overline{z}^{(k)}$. For a points $z^{(k)}$ on the upper bank of the slit $\Delta_k$ the conjugate is the one corresponding to the lower bank. Now, we define $ {\phi}^* : \mathcal{R} \longrightarrow \overline{\mathbb{C}}$ as follows  ${\phi}^*(\zeta):= \overline{\phi (\overline{\zeta})}$. It is easy to verify that $ {\phi}^*$ is a conformal mapping of $\mathcal{R}$ onto $\overline{\mathbb{C}}$ with the same divisor as $\phi $. Therefore, there exists a constant $\kappa$ such that $ {\phi}^* = \kappa\phi$. The corresponding branches satisfy the relations
\[ {\phi}^*_k (z) = \overline{\phi_k(\overline{z})} = \kappa {\phi}_k (z), \qquad k=0,1,2.
\]
Comparing the Laurent expansions at $\infty$ of $\overline{\phi_0 (\overline{z})}$ and $\kappa {\phi}_0 (z)$, using the fact that $c_{1} >0$, it follows that $\kappa = 1$. Then
\[   {\phi}_k (z) = \overline{\phi_k (\overline{z})}, \qquad k=0,1,2.
\]
This in turn implies that for each $k=0,1,2$ all the coefficients, in particular the leading one, of the Laurent expansion at infinity of $ {\phi}_k $ are real numbers. Obviously, $c$ is the product of these leading coefficients. Since they are real numbers $c$ is real and since it is of module $1$, it has to be either $1$ or $-1$. Analyzing the Laurent expansion of the branches at $\infty$ one easily concludes that indeed $c=1$. So, we can assume in the following that
\begin{equation}\label{normconfmap}
\prod_{k=0}^{2}\phi_{k} \equiv   1,\qquad c_{1}>0.
\end{equation}
It is easy to see that conditions \eqref{divisorcond} and \eqref{normconfmap} determine $\phi $ uniquely.

The question of finding explicit expressions for conformal representations of three sheeted Riemann surfaces of genus zero depending on the values of the end points of the intervals $\Delta_1,\Delta_2$ was considered in \cite{LPRY}. This problem is not solvable in closed form. \cite[Theorem 3.1]{LPRY} gives an expression in terms of the solution of a system of two nonlinear equations of higher order. Already the simpler case of two intervals of equal length requires the solution of a bicuartic equation \cite[Theorem 3.3]{LPRY}. A numerical method for solving the system of equations is given in \cite[Theorem 6.1]{LPRY}.

In \cite[Lemma 4.2]{GSI} the authors proved the following result.
\begin{lemma}
	\label{uniqueness}
	Their exists a unique pair of functions  $(F_1,F_2)$ such that for $k=1,2$
 \begin{enumerate}
 	\item $F_k, 1/F_k\in\mathcal{H}(\C\setminus\Delta_k)$,
 	\item $F_k'(\infty)>0$,
 	\item $\frac{|F_k(x)|^2}{|F_{k-1}(x)F_{k+1}(x)|}=1$, $x\in\Delta_k$,
 \end{enumerate}
$(F_0\equiv F_3 \equiv 1)$. The functions may be expressed by the formulas
\begin{equation*}
F_k :=\prod_{\nu=k}^{2}\phi_\nu, \qquad k=1,2.
\end{equation*}
\end{lemma}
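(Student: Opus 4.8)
The plan is to prove existence by checking directly that the functions $F_k:=\prod_{\nu=k}^{2}\varphi_\nu$ satisfy (1)--(3), and then to prove uniqueness by a maximum–principle argument in the spirit of Proposition \ref{prop4}. For (1), each branch $\varphi_\nu$ is holomorphic and non-vanishing off the slits of its sheet, and the only zero and pole of $\varphi$ lie over $\infty$ on $\mathcal{R}_0$ and $\mathcal{R}_2$; hence $F_2=\varphi_2$ is holomorphic and non-vanishing on $\overline{\C}\setminus\Delta_2$ with a simple pole at $\infty$, and $F_1=\varphi_1\varphi_2$ is a priori holomorphic and non-vanishing on $\overline{\C}\setminus(\Delta_1\cup\Delta_2)$. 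The key point is that $F_1$ continues analytically across $\Delta_2$: by \eqref{identif} (with $k=1$) one has $\varphi_1(x_\pm)=\varphi_2(x_\mp)$ for $x\in\Delta_2$, so the boundary values $\varphi_1(x_+)\varphi_2(x_+)$ and $\varphi_1(x_-)\varphi_2(x_-)$ agree; since moreover $\varphi_1,\varphi_2$ are the two symmetric branches at the branch points (so $\varphi_1+\varphi_2$ and $\varphi_1\varphi_2$ are analytic there), the endpoints are removable and $F_1,1/F_1\in\mathcal{H}(\C\setminus\Delta_1)$. For (2) I would read the leading coefficients off \eqref{divisorcond}--\eqref{normconfmap}: since $\prod_{k}\varphi_k\equiv1$ forces $\varphi_1(\infty)=1/(c_1c_2)$, we get $F_1(z)\sim z/c_1$ and $F_2(z)\sim c_2z$, and the normalization $c_1>0$ together with the real symmetry $\varphi_k(z)=\overline{\varphi_k(\overline z)}$ (making all Laurent coefficients real) and the orientation of $\varphi$ give $c_1,c_2>0$, i.e. $F_k'(\infty)>0$.

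The identity (3) is where the gluing relations and the symmetry combine. For $x\in\Delta_1$ the sheets $\mathcal{R}_0,\mathcal{R}_1$ are identified, so by \eqref{identif} and real symmetry $\varphi_0(x_+)=\varphi_1(x_-)=\overline{\varphi_1(x_+)}$, while $\varphi_2(x)$ is real (being analytic and real-symmetric across $\Delta_1\subset\Omega_2$); evaluating $\varphi_0\varphi_1\varphi_2\equiv1$ at $x_+$ yields $|\varphi_1(x)|^2|\varphi_2(x)|=1$, which is exactly $|F_1|^2/|F_0F_2|=1$. For $x\in\Delta_2$ the identification $\varphi_1(x_+)=\varphi_2(x_-)=\overline{\varphi_2(x_+)}$ gives $|\varphi_1(x)|=|\varphi_2(x)|$, i.e. $|F_2|^2/|F_1F_3|=1$ (recall $F_0\equiv F_3\equiv1$). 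This finishes existence.

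For uniqueness, suppose $(\tilde F_1,\tilde F_2)$ is a second pair satisfying (1)--(3). Set $R_k:=F_k/\tilde F_k$ and $u_k:=\ln|R_k|$. By (1) each $R_k$ is holomorphic and non-vanishing on $\overline{\C}\setminus\Delta_k$ with a finite limit at $\infty$, positive by (2), so $u_k$ is harmonic on $\Omega_k$. Dividing the two instances of (3) and using $F_0=F_3\equiv1$ gives the coupled conditions $u_1=\tfrac12u_2$ on $\Delta_1$ and $u_2=\tfrac12u_1$ on $\Delta_2$; because the intervals are disjoint, $\Delta_1\subset\Omega_2$ and $\Delta_2\subset\Omega_1$, so the right-hand sides are continuous and bounded up to the endpoints and each $u_k$ is a bounded harmonic function with continuous boundary data. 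The maximum principle then yields $\|u_1\|_{\overline\Omega_1}\le\tfrac12\|u_2\|_{\overline\Omega_2}$ and $\|u_2\|_{\overline\Omega_2}\le\tfrac12\|u_1\|_{\overline\Omega_1}$, hence $\|u_1\|\le\tfrac14\|u_1\|$ and both sup-norms vanish. Thus $|R_k|\equiv1$, so by the open mapping theorem $R_k$ is a unimodular constant; being positive at $\infty$, $R_k\equiv1$ and $F_k=\tilde F_k$.

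The robust part is the uniqueness: it is just a two-component version of the contraction in Proposition \ref{prop4}, the factor $\tfrac12$ contributed by each slit combining to the decay $\tfrac14$ that forces the harmonic differences to vanish. I expect the delicate points to be in the existence half: justifying the analytic continuation of $F_1$ across $\Delta_2$ including the branch-point endpoints, and pinning down the sign in (2)---the positivity of $c_2$, which does not follow from $\prod_k\varphi_k\equiv1$ alone and must be extracted from the real symmetry and the orientation of the conformal map $\varphi$.
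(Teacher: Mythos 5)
The paper never proves this lemma --- it is quoted from \cite[Lemma 4.2]{GSI} --- so your argument has to stand on its own, and unfortunately the existence half breaks down exactly at the point you yourself flagged as delicate: the sign of $c_2$. Your checks of (1) and (3) via the gluing relations \eqref{identif} and real symmetry are correct, but the claim that real symmetry and ``the orientation of $\varphi$'' give $c_2>0$ is false; under the normalization \eqref{divisorcond}--\eqref{normconfmap} one necessarily has $c_2<0$. To see this, take the symmetric configuration $\Delta_1=[-b,-a]$, $\Delta_2=[a,b]$. The reflection $z\mapsto -z$ lifts to a conformal automorphism $s$ of $\mathcal{R}$ with $s(z^{(0)})=(-z)^{(2)}$, $s(z^{(1)})=(-z)^{(1)}$, $s(z^{(2)})=(-z)^{(0)}$ (it respects the bank identifications), and $\varphi\circ s$ has the same divisor as $1/\varphi$, so $\varphi\circ s=\kappa/\varphi$ for a constant $\kappa$. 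Reading this identity on sheet $1$ at $z=\infty$ gives $\kappa=\varphi_1(\infty)^2>0$, while reading it on sheet $0$ gives $\varphi_2(-z)=\kappa/\varphi_0(z)$, whose leading terms at infinity yield $-c_2=\kappa/c_1$, i.e.\ $c_2=-\kappa/c_1<0$ whenever $c_1>0$. A completely explicit model confirms this: parametrizing the surface by $z=R(w)=w\left(w^2-\tfrac12\right)/(w^2-1)$ and taking $\varphi(w)=-(w+1)/(w-1)$, one checks directly that the product of the three branches is identically $1$ and that $c_1=1/8>0$, $c_2=-8<0$. Moreover $c_1c_2$ only rescales by $\lambda^2$ under any admissible real renormalization $\varphi\mapsto\lambda\varphi$, so its negative sign is invariant: one of $F_1'(\infty)=1/c_1$ and $F_2'(\infty)=c_2$ is always negative, and the pair $(\varphi_1\varphi_2,\varphi_2)$ can never satisfy (2). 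The pair satisfying (1)--(3) is $(\varphi_1\varphi_2,\,-\varphi_2)$ under \eqref{normconfmap}; in other words the statement as transcribed carries a sign slip, which your proof inherits (by asserting the needed positivity) rather than detects or repairs.

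There is also a gap in the uniqueness half, though the contraction idea (mirroring Proposition \ref{prop4}) is the right one. The assertion that each $u_k=\ln|F_k/\tilde F_k|$ is a \emph{bounded} harmonic function does not follow from (1)--(3): those conditions only prescribe boundary values of moduli on the open intervals, and a harmonic function in $\Omega_1$ such as $\mathrm{Re}\,\bigl((z-a_1)(z-b_1)\bigr)^{-1/2}$ has vanishing boundary values at every interior point of $\Delta_1$ from both sides, tends to $0$ at $\infty$, and yet is unbounded. Having continuous boundary data $\tfrac12 u_2$ therefore does not make $u_1$ bounded, and without boundedness neither the maximum principle nor the Poisson representation you invoke is available; one must either restrict to the outer (Szeg\H{o}) class, i.e.\ require $\ln|F_k|$ to be representable by its boundary values, or add an argument excluding such singular factors. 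Both issues --- the sign normalization and the precise uniqueness class --- are matters of convention that the citation to \cite{GSI} sweeps under the rug, but a self-contained proof has to resolve them, and as written yours does not.
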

The boundary conditions for the functions $F_k$, $k=1,2$ are
\begin{align*}
|F_1(x)|^2 =& |F_2(x)|,\qquad x\in\Delta_1,\\
|F_2(x)|^2 =& |F_1(x)|,\qquad x\in\Delta_2.
\end{align*}
Compare with \eqref{vector_equil} after taking logarithm.

From \cite[Theorem 4.2, Corollary 4.3]{ULS} we know that for $k=1,2,$
\[ \lim_{n\to \infty} \frac{Q_{n+1,k}(z)}{Q_{n,k}(z)} = \frac{F_k(z)}{F'_k(\infty)},
\]
uniformly on compact subsets of $\Omega_k$ and
\[\lim_{n \to \infty} \frac{\kappa_{n+1,k}}{\kappa_{n,k}} = \frac{F_k'(\infty)}{\sqrt{F_{k-1}'(\infty)F_{k+1}'(\infty)}},
\]
where by definition we take $F_0'(\infty) = F_3'(\infty)= 1$. On the other hand, \eqref{limfund*} and \eqref{conductor*} imply that
\[ \lim_{n\to \infty} \frac{Q_{n+1,k}(z)}{\Phi_k^{n+1}(z)} \frac{\Phi_k^{n}(z)}{Q_{n,k}(z)} = \frac{1}{\Phi_k(z)} \lim_{n\to
\infty} \frac{Q_{n+1,k}(z)}{Q_{n,k}(z)}  = 1,
\]
uniformly on compact subsets of $\Omega_k$ and
\[ \lim_{n\to \infty} \frac{\kappa_{n+1,k}}{C_k^{n+1}} \frac{C_k^{n}}{\kappa_{n,k}} = \frac{1}{C_k} \lim_{n\to \infty}
\frac{\kappa_{n+1,k}}{\kappa_{n,k}}  = 1.
\]
 Consequently
\begin{equation} \label{otro} \Phi_k(z) \equiv \frac{F_k(z)}{F'_k(\infty)}, \qquad C_k =
\frac{F_k'(\infty)}{\sqrt{F_{k-1}'(\infty)F_{k+1}'(\infty)}}, \qquad k=1,2.
\end{equation}

\end{document}